\documentclass{amsart}
\addtolength{\hoffset}{-1cm} 
\addtolength{\textwidth}{2cm} 
\usepackage{amsmath, amssymb}
\usepackage{bbm}
\usepackage{latexsym}
\usepackage{xcolor}
\usepackage{tikz}
\usetikzlibrary{arrows}
\usetikzlibrary{patterns}
\usetikzlibrary{shapes}
\usepackage{mathrsfs}
\usepackage{bbm}
\usepackage{amsfonts}

\usepackage{tikz-cd}

\newcommand{\R}{\mathbb{R}}
\newcommand{\Z}{\mathbb Z}
\newcommand{\N}{\mathbb N}

\tikzset{>=stealth',every on chain/.append style={join},
         every join/.style={->}}
 \usepackage[usenames,dvipsnames]{pstricks}
 \usepackage{epsfig}
 \usepackage{pst-grad} 
 \usepackage{pst-plot} 



\newtheorem{thm}{Theorem}
\newtheorem{lemma}[thm]{Lemma}
\newtheorem{cor}[thm]{Corollary}
\newtheorem{defi}[thm]{Definition}
\newtheorem{prop}[thm]{Proposition}

\newtheorem*{thmi}{Theorem}

\theoremstyle{remark} 
\newtheorem{remark}[]{Remark}

\newcommand{\be}{\begin{equation}}
\newcommand{\ee}{\end{equation}}
\newcommand{\weakto} {\rightharpoonup}

\newcommand{\argmin} {{\rm argmin}}

\usepackage{mathtools} 
\mathtoolsset{showonlyrefs, showmanualtags} 


\title[Homotopy properties of horizontal loop spaces]{Homotopy properties of horizontal loop spaces and applications to closed sub-riemannian geodesics}
\author{Antonio Lerario}   \thanks{Antonio Lerario, SISSA. email: \textsf{lerario@sissa.it}.}
\author{Andrea Mondino}   \thanks{Andrea Mondino, The University of Warwick, Mathematics Institute. email: \textsf{a.mondino@warwick.ac.uk}.}

\begin{document}
\maketitle
\begin{abstract}
Given a manifold $M$ and a proper sub-bundle $\Delta\subset TM$, we investigate homotopy properties of the \emph{horizontal}   free loop space $\Lambda$, i.e. the space of absolutely continuous maps $\gamma:S^1\to M$ whose velocities are constrained to $\Delta$ (for example: legendrian knots in a contact manifold).

In the first part of the paper we prove that the base-point map $F:\Lambda \to M$ (the map associating to every loop its base-point) is a Hurewicz fibration for the $W^{1,2}$ topology on $\Lambda$. Using this result we show that, even if  the space $\Lambda$ might have deep singularities (for example: constant loops form a singular manifold homeomorphic to $M$),  its homotopy can be controlled nicely. In particular we prove that $\Lambda$ (with the $W^{1,2}$ topology) has the homotopy type of  a CW-complex, that its inclusion in the standard   free loop space (i.e. the space of loops with no non-holonomic constraint) is a homotopy equivalence, and consequently  its homotopy groups can be computed as $\pi_k(\Lambda)\simeq \pi_k(M) \ltimes \pi_{k+1}(M)$ for all $k\geq 0.$

In the second part of the paper we address the problem of the existence of \emph{closed} sub-riemannian geodesics. In the general case we prove that if $(M, \Delta)$ is a compact sub-riemannian manifold, each non trivial homotopy class in $\pi_1(M)$ can be represented by a closed sub-riemannian geodesic. 

In the \emph{contact} case, we prove a min-max result generalizing the celebrated Lyusternik-Fet theorem: if $(M, \Delta)$ is a compact, contact manifold, then every sub-riemannian metric on $\Delta$ carries at least one closed sub-riemannian geodesic. This result is based on a combination of the above topological results with the delicate study of an analogous of a Palais-Smale condition in the vicinity of \emph{abnormal} loops (singular points of $\Lambda$).
\end{abstract}

\section{Introduction}
\subsection{The horizontal loop space}In this paper we study the topology of the space of loops $\gamma:S^1\to M$ whose velocities are constrained in a \emph{non-holonomic} way (we call these loops \emph{horizontal}). The constraint is made explicit by requiring that the loops should be absolutely continuous curves (hence differentiable almost everywhere) and that their velocity should belong a.e. to a totally non-integrable distribution $\Delta\subset TM.$

The case $\Delta=TM$ clearly imposes no constraint. The case when $\Delta$ is integrable imposes a constraint which is still holonomic (loops are confined on leaves of a foliation, by Frobenius Theorem) and can be reduced to the previous one. The totally non-integrable (or non-holonomic) case arises by requiring that the given distribution satisfies the \emph{H\"ormander} condition: a finite number of iterated brackets of smooth sections of $\Delta$ should span all the tangent space $TM$ (see \cite{AgrachevBarilariBoscain, Montgomery}). Contact manifolds are probably the most well known non-holonomic geometries and their (smooth) horizontal loops are called \emph{legendrian knots} \cite{Sabloff}.

In this paper we require our loops to have square-integrable velocity; this requirement determines a natural topology on the loop space, as follows. Consider first the set of all horizontal paths:
\be \Omega=\{\gamma:[0,1]\to M\,|\, \textrm{$\gamma$ is absolutely continuous, $\dot{\gamma}\in \Delta$ a.e. and is $L^2$-integrable}\}.\ee
The space $\Omega$ endowed with the $W^{1,2}$ topology is a Hilbert manifold modeled on $L^{2}(I, \R^d)\times \R^m$ (where $d=\textrm{rank}(\Delta)$ and $m=\textrm{dim}(M)$) and the \emph{endpoint map} is the smooth function:
\be F:\Omega\to M\times M,\quad \gamma\mapsto(\gamma(0), \gamma(1)).\ee
The object of our interest, the \emph{free horizontal loop space}, will thus be defined as:
\be \Lambda=F^{-1}(\textrm{diagonal in $M\times M$})\hookrightarrow \Omega, \ee
and endowed with the induced topology.

In the non-holonomic case, the loop space $\Lambda$ is a highly \emph{singular} object. For example, constant loops form a whole singular manifold (homeomorphic to $M$ itself). In the \emph{contact} case these are the ``only'' singularities (Proposition \ref{prop:singularities} below), but in general the presence of \emph{abnormal} curves might imply other (deep) singularities, see \cite{AgrachevBarilariBoscain, CJT} and \cite[Section 5]{Montgomery}. The structure of these singularities is at the origin of Liu and Sussmann's minimality theorem \cite{LiuSuss} (see the proof of this theorem given in \cite{AgrachevBarilariBoscain} and the discussion in \cite[Section 3.8]{Montgomery}).

\begin{remark} The uniform convergence topology  on $\Omega$ has been studied in \cite{Sarychev} and the $W^{1,1}$ in \cite{dynamic}. The $W^{1,p}$ topology with $p>1$ has been investigated by the first author and F. Boarotto in \cite{BoarottoLerario} - for the scopes of calculus of variations the case $p>1$ is especially interesting as one can apply classical techniques from critical point theory to many problems of interest. All these topologies are equivalent from the point of view of homotopy theory \cite[Theorem 5]{BoarottoLerario}, but in the $W^{1,\infty}$ topology the so-called \emph{rigidity} phenomenon appears: some curves might be isolated (up to reparametrization), see \cite{Bryant}.
\end{remark}

\subsection{Homotopy properties of the horizontal loop space} 


One of the main technical ingredients in order to understand the topological structure of the horizontal loop space $\Lambda$ is the \emph{Hurewicz fibration} property for the endpoint map. Recall that a map between topological spaces is a (Hurewicz) fibration if it has the homotopy lifting property with respect to any space (see Section \ref{sec:hurewicz} below and \cite{rure, Spanier} for more details). Our first result proves this property for the endpoint map restricted to the loop space (in the following statement we identify $M$ with the diagonal in $M\times M$, where $F|_{\Lambda}$ takes values). The techniques for the proof use a novel combination of quantitative control theory and classical homotopy theory.

\begin{thmi}[The Hurewicz fibration property]
The map $F|_{\Lambda}:\Lambda\to M$, that associates to every horizontal loop its base-point, is a Hurewicz fibration for the $W^{1,2}$ topology on $\Lambda$.
\end{thmi}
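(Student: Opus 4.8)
The plan is to establish the homotopy lifting property directly by constructing an explicit lift, using the structure of $\Omega$ as a Hilbert manifold together with quantitative controllability estimates. Given a space $X$, a homotopy $h\colon X\times[0,1]\to M$, and a lift $\tilde h_0\colon X\to\Lambda$ of $h_0=h(\cdot,0)$, I want to produce $\tilde h\colon X\times[0,1]\to\Lambda$ lifting $h$ with $\tilde h(\cdot,0)=\tilde h_0$. The key point is that a horizontal loop based at a point $p$ can be deformed to a horizontal loop based at a nearby point $q$ in a way that depends continuously (in $W^{1,2}$) on the relevant data: concatenate the given loop with a short horizontal path from $q$ to $p$ at the beginning and its reverse at the end, then reparametrize. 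So the first step is to prove a \emph{local lifting lemma}: for $p\in M$ there is a neighborhood $U\ni p$ and a continuous map $U\times U\times\Lambda_p\to\Lambda$ (where $\Lambda_p=F|_\Lambda^{-1}(p)$) realizing this "change of base-point" operation, continuous jointly in all variables, and reducing to the identity when both base-points equal $p$.

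The crucial ingredient for that lemma is a quantitative version of the Chow–Rashevskii theorem giving, for each $p$, a continuous (even smooth) family of horizontal paths $\sigma_{q,q'}\colon[0,1]\to M$ joining any two points $q,q'$ in a neighborhood of $p$, with $\sigma_{q,q}\equiv q$, and with $W^{1,2}$-norm controlled by the (sub-riemannian) distance $d_{SR}(q,q')$. One gets this from the standard construction of local controllability: choose vector fields $X_1,\dots,X_d$ spanning $\Delta$ near $p$ and iterated brackets spanning $T_pM$; the endpoint map of concatenated flows is a submersion onto a neighborhood of $p$, so by the implicit function theorem one can solve for control parameters depending smoothly on $(q,q')$, vanishing on the diagonal. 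The needed joint continuity of concatenation and reparametrization in the $W^{1,2}$ topology is where one must be careful but is ultimately routine Hilbert-manifold bookkeeping: concatenation followed by an affine reparametrization of $[0,1]$ is continuous $\Omega\times\Omega\to\Omega$ on the subset where the endpoints match.

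With the local lifting lemma in hand, the second step is the standard patching argument for Hurewicz fibrations. Since $M$ is a manifold, the property of having the homotopy lifting property is local on the base (a map that is locally a fibration over an open cover of a paracompact, in particular metrizable, base is a fibration — this is the Hurewicz uniformization theorem; alternatively one argues directly using a partition of unity subordinate to $\{U_i\}$ and a Lebesgue-number subdivision of $[0,1]$ for each point of $X$, gluing the local lifts). Thus it suffices to check the HLP over each $U_i$, which is exactly what the local lifting lemma provides: pull back $\tilde h_0$ through the base-point change maps along the path $t\mapsto h(x,t)$. One should verify that the glued lift genuinely lands in $\Lambda$ (each intermediate curve is a genuine horizontal loop) and is continuous into the $W^{1,2}$ topology; continuity at parameter values where one switches charts follows because on overlaps the two recipes agree up to a continuously-varying reparametrization, and reparametrizations act continuously.

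The main obstacle is the quantitative local construction: obtaining the base-point-change family $\sigma_{q,q'}$ that is \emph{simultaneously} (i) continuous — indeed smooth — jointly in $(q,q')$, (ii) trivial on the diagonal, and (iii) controlled in $W^{1,2}$ so that concatenation does not blow up the loop's norm as the homotopy parameter varies. Pure Chow–Rashevskii gives existence of connecting paths but not this parametrized, normalized, quantitatively controlled version; supplying it is precisely the "quantitative control theory" the introduction alludes to, and it is the heart of the argument. Once that is set up, the homotopy-theoretic part (local-to-global for fibrations) is classical.
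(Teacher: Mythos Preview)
Your proposal is correct and follows essentially the same route as the paper: a continuous family of short connecting controls coming from quantitative local controllability (quoted in the paper as \cite[Proposition~2]{BoarottoLerario}), continuity of a concatenation operator (\cite[Proposition~3]{BoarottoLerario}), and then Hurewicz uniformization to pass from local to global. Two refinements in the paper are worth flagging. First, the argument is carried out in a ``global chart'' $\mathcal{U}=L^2(I,\R^l)\times M$ (so all operations are on controls rather than on curves) and only at the end transferred to $\Lambda$ via the continuous minimal-control right inverse $\mu:\Omega\to\mathcal{U}$. Second, and more to the point, the connecting controls come with \emph{variable} duration $T(x,y)$ satisfying $T(x,x)=0$, and the concatenation map $\mathcal{C}(u,v,T)$ is built so that $\mathcal{C}(u,0,0)=u$; this is exactly what makes the lift agree with $\tilde h_0$ at $t=0$. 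Your formulation with $\sigma_{q,q'}\colon[0,1]\to M$ and a fixed affine reparametrization would, on the diagonal $q=q'$, return only a reparametrization of the original loop rather than the loop itself, so the ``reducing to the identity'' clause you want would fail as stated---you need the variable-time device (or an equivalent non-uniform reparametrization) to fix this.
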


As a consequence of this property, strong information on the homotopy of the horizontal loop space can be deduced. What is remarkable at this point is that, even if $\Lambda$ might be extremely singular, its homotopy is very well controlled.
\begin{thmi}[The homotopy of the horizontal loop space] The horizontal loop space $\Lambda$ has the homotopy type of a CW-complex and its inclusion in the standard loop space is a homotopy equivalence; in particular\footnote{The product in \eqref{eq:pikLambda} is \emph{semi-direct} only possibly for $k=1$; in all other case the homotopy group on the left is abelian and the product is indeed direct; see Remark \ref{rmk:semi} below.}
 for all $k\geq 0$:
\be\label{eq:pikLambda}
 \pi_{k}(\Lambda)\simeq \pi_k(M) \ltimes \pi_{k+1}(M).
 \ee
\end{thmi}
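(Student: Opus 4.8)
The plan is to compare $\Lambda$ with the \emph{standard} free loop space through the two base-point fibrations over $M$, reduce the statement to an identification of their fibres, and only then upgrade a weak homotopy equivalence to a genuine one by a CW-approximation argument. So, first I would set up the two fibrations over $M$. Let $\Lambda^{\mathrm{std}}=\mathrm{Map}(S^1,M)$ be the standard free loop space with the $W^{1,2}$ topology (homotopy equivalent to the $C^{0}$ one), so that the base-point evaluation $\mathrm{ev}_0:\Lambda^{\mathrm{std}}\to M$ is a locally trivial, hence Hurewicz, fibration with fibre the based loop space $\Omega_pM$ and with a section given by constant loops. The inclusion $\iota:\Lambda\hookrightarrow\Lambda^{\mathrm{std}}$ is continuous and satisfies $\mathrm{ev}_0\circ\iota=F|_\Lambda$; by the Hurewicz fibration property, $F|_\Lambda:\Lambda\to M$ is itself a Hurewicz fibration, with fibre over $p$ the based horizontal loop space $\Lambda_p=(F|_\Lambda)^{-1}(p)$. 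Thus $\iota$ is a morphism of fibrations over $\mathrm{id}_M$, and everything reduces to understanding $\iota$ on fibres.

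Next I would identify the fibre $\Lambda_p$. Introduce the based horizontal path space $\Omega_p=\{\gamma\in\Omega:\gamma(0)=p\}$, a Hilbert submanifold of $\Omega$, together with the endpoint map $e:\Omega_p\to M$, $e(\gamma)=\gamma(1)$; note $\Lambda_p=e^{-1}(p)$. Two facts are needed. (i) $\Omega_p$ is contractible: the reparametrization homotopy $h_s(\gamma)(t)=\gamma(st)$, $s\in[0,1]$, deforms $\Omega_p$ onto the constant path at $p$, and $(s,\gamma)\mapsto h_s(\gamma)$ is $W^{1,2}$-continuous, including at the degenerate value $s=0$, since $\|\dot h_s(\gamma)\|_{L^2}^2=s\int_0^s|\dot\gamma|^2\,dt\to0$. (ii) $e$ is a Hurewicz fibration: the quantitative control-theoretic argument behind the Hurewicz fibration property in fact shows that the full endpoint map $F:\Omega\to M\times M$ is a Hurewicz fibration, and $e$ is its pull-back along $\{p\}\times M\hookrightarrow M\times M$ (and pull-backs of Hurewicz fibrations are Hurewicz fibrations). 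Granting (i)--(ii), the homotopy exact sequence of $e$ gives $\pi_k(\Lambda_p)\cong\pi_{k+1}(M)$ for all $k\ge0$; moreover, since $h_s$ and $e$ map compatibly to the standard based path space $P_pM$ and its endpoint fibration under the inclusion $\Omega_p\hookrightarrow P_pM$ (whose fibre over $p$ is $\Omega_pM$), naturality of the homotopy exact sequences together with the five lemma show that $\iota|_{\Lambda_p}:\Lambda_p\hookrightarrow\Omega_pM$ induces isomorphisms on all homotopy groups. Running the five lemma once more, now for the homotopy exact sequences of $F|_\Lambda$ and $\mathrm{ev}_0$ (with due care for $\pi_0$ and $\pi_1$), one concludes that $\iota$ is a weak homotopy equivalence.

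To conclude I would upgrade this to a genuine homotopy equivalence and read off the homotopy groups. Since $S^1$ is compact and $M$ (a manifold) has the homotopy type of a CW-complex, so does $\Lambda^{\mathrm{std}}=\mathrm{Map}(S^1,M)$. Invoking the classical fact that the total space of a Hurewicz fibration over a base of CW-homotopy type, with fibres of CW-homotopy type, again has CW-homotopy type — and the consequence that the fibres of such a fibration have CW-homotopy type — one gets: $\Lambda_p$ has CW-homotopy type, being the fibre of $e:\Omega_p\to M$ over the CW-complex $M$ with $\Omega_p$ contractible (hence certainly of CW-homotopy type); and therefore $\Lambda$ has CW-homotopy type, being the total space of $F|_\Lambda:\Lambda\to M$ over the CW-complex $M$ with fibres $\Lambda_p$ of CW-homotopy type. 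By Whitehead's theorem, the weak equivalence $\iota$ is then a homotopy equivalence. Finally, the section of $\mathrm{ev}_0$ by constant loops splits its homotopy exact sequence, so $\pi_k(\Lambda^{\mathrm{std}})\cong\pi_k(M)\ltimes\pi_k(\Omega_pM)=\pi_k(M)\ltimes\pi_{k+1}(M)$ — a direct product for $k\ge2$, possibly semidirect only for $k=1$, with the usual description for $k=0$, cf. the footnote to \eqref{eq:pikLambda} — and transporting along the homotopy equivalence $\iota$ yields \eqref{eq:pikLambda}.

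The main obstacle is the fibre-identification step, and within it fact (ii) together with the compatibility assertion: one must extract from the proof of the Hurewicz fibration property that the (based) endpoint map is a Hurewicz fibration, and — more delicately — check that the identification $\pi_k(\Lambda_p)\cong\pi_{k+1}(M)$ it produces is the one induced by the inclusion $\Lambda_p\hookrightarrow\Omega_pM$, so that the comparison of long exact sequences is legitimate. The contractibility of $\Omega_p$ is elementary, but the $W^{1,2}$-continuity of the reparametrization homotopy at the degenerate time $s=0$ has to be checked with some care; the CW-homotopy-type bookkeeping at the end is routine once the relevant classical fibration and mapping-space results are cited.
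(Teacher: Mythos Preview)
Your proposal is correct and follows essentially the same route as the paper: compare the Hurewicz fibrations $F|_\Lambda:\Lambda\to M$ and $\mathrm{ev}_0:\Lambda^{\mathrm{std}}\to M$ via the five lemma on their long exact sequences to obtain a weak equivalence, deduce CW-homotopy type of $\Lambda$ from the CW-type of base and fibre, and finish with Whitehead's theorem together with the constant-loop section for the splitting \eqref{eq:pikLambda}. The only difference is that you unpack the fibre identification $\pi_k(\Lambda_p)\cong\pi_{k+1}(M)$ and the CW-type of $\Lambda_p$ yourself (via contractibility of $\Omega_p$ and the Hurewicz property of the based endpoint map $e:\Omega_p\to M$), whereas the paper simply imports both facts from \cite{BoarottoLerario}; in particular your ``main obstacle'' (ii), including the compatibility with the inclusion $\Lambda_p\hookrightarrow\Omega_pM$, is already established there.
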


One immediate but remarkable corollary of the fact that $\Lambda$ has the homotopy type of a CW-complex is that  \emph{every} loop has a neighborhood which is contractible in $\Lambda$ (Corollary \ref{cor:contractible} below). This gives a remarkable sharpening  of the local structure of $\Lambda$ near a singular curve.

\begin{remark}[The legendrian fundamental group]
Notice that the homotopy groups of the standard loop space have  the same structure  \eqref{eq:pikLambda} of the horizontal ones. This fact is  in sharp contrast with the situation of  \emph{legendrian regular} homotopies of  legendrian knots in a contact manifold: given a contact manifold $(M,\Delta)$,  a $C^1$ horizontal loop $\gamma:S^1 \to M$ which is also an immersion is called a \emph{legendrian knot}. Two legendrian knots are \emph{legendrian-homotopically} equivalent if there exists a $C^1$ homotopy between them all made of legendrian knots.  The \emph{legendrian} fundamental group is then defined as the group whose elements are the equivalence classes of legendrian knots under the equivalence relation of being legendrian-homotopically equivalent.  By using the $h$-principle it is possible to show \cite[Section 3.3]{EES} that there is a surjection of the legendrian fundamental group into the standard fundamental group, and   (for sake of simplicity, here we consider dim$(M)$=3)
 the kernel of the surjection is $\Z$.  Roughly, this means that it is highly not true in general that two horizontal knots  inducing the same class in $\pi_0(\Lambda)\simeq \pi_1(M)$ are also legendrian-homotopically equivalent. This is due to the much stronger constraint, in the legendrian setting, that the loop (and the homotopy at a fixed time) is an immersion. 
\end{remark}

\subsection{The calculus of variation on the horizontal loop space}
Once a sub-riemannian structure (i.e. a smooth scalar product) on $\Delta$  is fixed, on the space of horizontal paths we can define the length functional:
\be\label{eq:deflength}
\textrm{Length} (\gamma)=  \int_{[0,1]} |\dot{\gamma}(t)|\, dt, \quad \forall \gamma \in \Omega(M).
\ee
A horizontal loop is called a \emph{sub-riemannian closed geodesic} if  it has constant speed and it is locally length minimizing (see Definition \ref{def:closedGeod}).
On the horizontal path space we can also define the energy functional:
 \be\label{eq:defJint}
J(\gamma)=\frac 1 2  \int_{[0,1]} |\dot{\gamma}(t)|^2 \, dt, \quad \forall \gamma \in \Omega(M).
\ee
The two functionals \eqref{eq:deflength}-\eqref{eq:defJint}   are linked in a well known matter, and it is clear that the energy functional $J$ fits   better then the length with the $W^{1,2}$ topology on $\Omega$ (e.g. it is coercive and weakly lower semicontinuous). 

It should be clear that the singularities of the space $\Lambda$, which in our problem acts as a ``constraint'' for the the functional $J$, will cause serious  problems in extending the classical setup for finding critical points of $J|_{\Lambda}.$
We nevertheless notice (Proposition \ref{prop:CharClosedG}) that solutions of the  normal Lagrange multiplier equation for $J$ constrained to $\Lambda$ are indeed sub-riemannian geodesics (but the viceversa is false). Therefore in order to show existence of closed sub-riemannian  geodesics it will be enough to show existence of solutions to the previous equation (which locally can be written as in \eqref{eq:lagrange}). This will be done in two different ways: by minimization and by min-max.

\subsection{Closed sub-riemannian geodesics} We will first show that if  $\pi_1(M)$ is not trivial then there always exists a closed sub-riemannian geodesic, extending  the  celebrated theorem of Cartan  (proved in the riemannian framework) to arbitrary sub-riemannian structures.  This result will be achieved via a minimization process, well known in the literature  as ``direct method in the calculus of variations'', based on the compatibility of the  functional $J$ with the strong and weak $W^{1,2}$ topologies on $\Lambda$.
 
\begin{thmi}[Existence of closed sub-riemannian  geodesics in $\pi_1(M)$] \label{thm:pi1int}
Let $M$ be a compact, connected sub-riemannian manifold. Then for every \emph{nonzero} $\alpha\in \pi_1(M)$ there exists a closed sub-riemannian geodesic $\gamma:S^1\to M$ such that $[\gamma]=\alpha$ and $\gamma$ minimizes $J$ in its homotopy class.\end{thmi}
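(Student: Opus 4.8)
The plan is to run the \emph{direct method in the calculus of variations} for the energy $J$ on the relevant component of $\Lambda$, and then to upgrade the resulting minimizer to a closed sub-riemannian geodesic by a local length-minimality argument. Fix a nonzero $\alpha\in\pi_1(M)$ and let $\Lambda_\alpha\subset\Lambda$ be the set of horizontal loops whose (free) homotopy class is $\alpha$; this is a non-empty union of connected components of $\Lambda$. Non-emptiness can be read off from the homotopy equivalence between $\Lambda$ and the standard loop space proved above, or seen directly: by the Chow--Rashevskii theorem one may approximate a smooth representative of $\alpha$, arc by arc over a finite cover by small balls, with horizontal arcs sharing the same endpoints and $C^0$-close to it, producing a horizontal loop freely homotopic to $\alpha$. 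Set $\ell_\alpha:=\inf_{\gamma\in\Lambda_\alpha}J(\gamma)\in[0,\infty)$.

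Next I would take a minimizing sequence $\gamma_n\in\Lambda_\alpha$, $J(\gamma_n)\to\ell_\alpha$. Using the Hilbert manifold model of $\Omega$ modeled on $L^2(I,\R^d)\times\R^m$, the bound on $J(\gamma_n)$ controls the $L^2$ component while the $\R^m$ (base-point) component ranges in the compact $M$; hence $\{\gamma_n\}$ is bounded and, since $W^{1,2}\hookrightarrow C^0$ is compact, along a subsequence (not relabelled) $\gamma_n\to\gamma$ \emph{uniformly} and $\dot\gamma_n\weakto\dot\gamma$ weakly in $L^2$. I then need to check that $\gamma\in\Lambda_\alpha$: (i) $\gamma$ is a loop in $M$, being a uniform limit of such; (ii) $\gamma$ is horizontal --- the constraint $\dot\gamma_n(t)\in\Delta_{\gamma_n(t)}$ is pointwise linear and convex, the subspaces $\Delta_{\gamma_n(t)}$ converge to $\Delta_{\gamma(t)}$ by uniform convergence of $\gamma_n$, and Mazur's lemma turns the weak $L^2$ convergence of $\dot\gamma_n$ into a.e. convergence of suitable convex combinations, so $\dot\gamma(t)\in\Delta_{\gamma(t)}$ a.e.; equivalently, one invokes the stated compatibility of the variational problem with the weak $W^{1,2}$ topology; (iii) $\gamma\in\Lambda_\alpha$ because $C^0$-close loops in $M$ are freely homotopic. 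Finally, weak lower semicontinuity of $J$ (convexity of $v\mapsto|v|^2$) gives $J(\gamma)\le\liminf_n J(\gamma_n)=\ell_\alpha$, so $\gamma$ minimizes $J$ on $\Lambda_\alpha$.

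It remains to show such a minimizer is a closed sub-riemannian geodesic in the sense of Definition~\ref{def:closedGeod}, i.e.\ has constant speed and is locally length-minimizing. For the speed: the constant-speed reparametrization of $\gamma$ (defined, if $\gamma$ has locally constant pieces, via the usual $\varepsilon$-regularized arclength change of variable and a limit) is again horizontal, lies in $\Lambda_\alpha$, has the same length, and does not increase $J$ --- indeed $J\ge\tfrac12\,\mathrm{Length}^2$ with equality exactly for constant speed --- so by minimality $\gamma$ already has constant speed $L=\mathrm{Length}(\gamma)$, with $L>0$ since $\alpha\ne0$ rules out constant loops. For local minimality, suppose some short subarc $\gamma|_{[a,b]}$ were \emph{not} length-minimizing among horizontal curves joining $\gamma(a)$ to $\gamma(b)$. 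A shortest such horizontal curve $\sigma$ exists (local Chow--Rashevskii together with a local instance of the same direct method; classical, see \cite{AgrachevBarilariBoscain, Montgomery}) and satisfies $\mathrm{Length}(\sigma)<\mathrm{Length}(\gamma|_{[a,b]})$. Choosing $[a,b]$ so short that $\gamma|_{[a,b]}$, and hence the shorter curve $\sigma$, lies inside a single contractible coordinate ball, $\sigma$ is homotopic to $\gamma|_{[a,b]}$ rel endpoints; splicing $\sigma$ into $\gamma$ therefore yields $\tilde\gamma\in\Lambda_\alpha$ with $\mathrm{Length}(\tilde\gamma)<L$, whence $J(\tilde\gamma^{\,\mathrm{const}})=\tfrac12\,\mathrm{Length}(\tilde\gamma)^2<\tfrac12 L^2=J(\gamma)$, contradicting minimality. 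Hence $\gamma$ is a closed sub-riemannian geodesic representing $\alpha$ and minimizing $J$ in its homotopy class. (When $\gamma$ happens to lie at a smooth point of $\Lambda$ it also solves the Lagrange multiplier equation \eqref{eq:lagrange} by Proposition~\ref{prop:CharClosedG}, but the local-minimality characterization above is what one really uses, since it also covers strictly abnormal minimizers.)

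I expect the two genuinely delicate points to be: first, the weak sequential closedness of $\Lambda_\alpha$ --- preserving horizontality \emph{and} the loop/homotopy constraints in the $W^{1,2}$-weak limit --- which is exactly where the convexity of the horizontal constraint and the compactness of $W^{1,2}\hookrightarrow C^0$ are essential; and second, the final upgrade from a variational minimizer to a bona fide geodesic, where one must reparametrize to constant speed (handling possible locally constant pieces) and, crucially, must splice in competitor curves without changing the homotopy class --- hence the localization to contractible balls and the appeal to local existence of minimizing horizontal curves. The remainder is a routine application of the direct method, made available precisely by the compatibility of $J$ with the strong and weak $W^{1,2}$ topologies on $\Lambda$.
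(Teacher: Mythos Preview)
Your argument is correct and follows the same overall strategy as the paper --- the direct method on the component $\Lambda_\alpha$ --- but the implementation differs in two notable respects.

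First, for the compactness/horizontality step the paper works in the global chart $\mathcal{U}=L^2(I,\R^l)\times M$: one passes to the minimal controls $u_n^*=u^*(\gamma_n)$, extracts a weak $L^2$-limit $\bar u$ and a limit base point $\bar x$, and sets $\bar\gamma:=A(\bar u,\bar x)$. Horizontality of $\bar\gamma$ is then automatic from the definition of $A$, and uniform convergence $\gamma_n\to\bar\gamma$ comes from Lemma~\ref{lem:unifconv}. Your route via Mazur's lemma and the pointwise linear constraint $\dot\gamma_n(t)\in\Delta_{\gamma_n(t)}$ is also valid, but as written it glosses over the fact that the $\dot\gamma_n$ live in moving fibres; to make it clean you should either pass to a local frame/annihilator description ($\omega_j(\gamma_n)\dot\gamma_n=0$, then use uniform convergence of $\omega_j(\gamma_n)$ together with weak convergence of $\dot\gamma_n$) or simply adopt the paper's global-chart device, which is what all the machinery of Section~\ref{sec:global} is there for. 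For the preservation of the homotopy class, the paper invokes the isomorphism $i_*:\pi_0(\Lambda)\to\pi_0(\Lambda_{\textrm{std}})$ of Theorem~\ref{thm:i*iso}, whereas your direct ``$C^0$-close loops are freely homotopic'' works just as well.

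Second --- and this is a genuine addition on your part --- you supply the verification that the minimizer is actually a closed sub-riemannian geodesic in the sense of Definition~\ref{def:closedGeod} (constant-speed reparametrization plus the local splicing argument). The paper's proof of Theorem~\ref{thm:pi1} stops at ``$\bar\gamma\in\alpha$ and $J(\bar\gamma)=\epsilon_\alpha$'' and leaves this last step implicit. Your argument here is the standard one and is correct; the localization to a contractible ball to control the homotopy class after splicing is exactly the point that needs to be said.
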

 
In case the manifold $M$ is simply connected, i.e. if $\pi_1(M)=\{0\}$, the existence of a closed geodesic is more subtle: indeed a minimization procedure would trivialize and give just a constant curve. To handle this case we then argue via  min-max: given $\alpha \in \pi_k(\Lambda)$, $k>1$, we define: 
\be\label{eq:defcaInt}
c_\alpha=\inf_{f \in \alpha} \sup_{\theta \in S^k} J(f(\theta)).
\ee
The goal is to prove that if  $c_\alpha>0$, then $c_\alpha$ is a critical value, i.e.  there exists a closed  geodesic with energy $c_\alpha$. 

The two needed technical  tools are the Palais-Smale property (which roughly says that if a sequence of ``approximate critical points'' weakly converges, then actually it converges strongly),  and the Deformation Lemma (if there are no critical values in an interval $[a,b]$, then it is possible to continuously deform the sub-level set corresponding to $b$ to the one corresponding to $a$).

For the min-max part of the paper, we will restrict ourselves to \emph{contact} sub-riemannian structures (see \cite[Section 6.1.2]{Montgomery} for more details). Under this assumption the space of horizontal loops $\Lambda$ is still singular, but the singularities correspond to constant loops.  From the technical viewpoint, the proof of the Palais-Smale property in the contact case  (see Theorem \ref{thm:PS}) is maybe the most challenging and original  part  of the paper, and it will be achieved by a sort of ``blow-up'' argument based on the choice of good coordinates.

Once the Palais-Smale property is settled, in order to  apply the   well known min-max techniques and get the existence of a closed geodesic, we still need to prove that the min-max level  $c_\alpha$  is strictly positive, under the assumption that $M$ is compact. In the riemannian case this part is quite straightforward, since small loops are contractible. In the sub-riemannian case this is less obvious (recall that the homotopy must be a horizontal loop for every time slice) but still true thanks to the fine topological properties of $\Lambda$ established in the first part of the paper (see in particular Theorem \ref{thm:epsilon} and Corollary \ref{cor:nonzero}).

The combination of all  the tools discussed so far  will allow us to prove the following theorem.

\begin{thmi}[Existence of closed sub-riemannian geodesics in contact manifolds]
Let $(M,\Delta)$ be a compact, contact sub-riemannian manifold.  Then there exists at least one non constant closed  sub-riemannian geodesic.
\end{thmi}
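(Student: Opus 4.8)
The plan is to prove existence of a non-constant closed sub-riemannian geodesic in a compact contact manifold by a Lyusternik--Fet--type min-max argument, using the topological and analytical machinery developed earlier in the paper. First I would observe that since $M$ is compact, there is a nonzero homotopy group $\pi_k(M)$ for some $k\ge 1$ (otherwise $M$ would be weakly contractible, hence a non-compact-supporting situation by e.g. standard arguments, or one simply notes a compact manifold is not weakly contractible). If $\pi_1(M)\neq 0$ we are already done by the previously stated Theorem on existence of closed geodesics in nontrivial classes of $\pi_1(M)$, so the interesting case is $M$ simply connected; then by Hurewicz there is a smallest $k\ge 2$ with $\pi_k(M)\neq 0$, and by the computation $\pi_{k-1}(\Lambda)\simeq \pi_{k-1}(M)\ltimes\pi_k(M)$ (in fact $\pi_{k-1}(M)=0$ so $\pi_{k-1}(\Lambda)\simeq\pi_k(M)\neq 0$) we obtain a nonzero class $\alpha\in\pi_{k-1}(\Lambda)$. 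I would then define the min-max value $c_\alpha=\inf_{f\in\alpha}\sup_{\theta\in S^{k-1}}J(f(\theta))$ as in \eqref{eq:defcaInt}.

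The next step is to show $c_\alpha>0$. This is where the fine topological results are invoked: because $\Lambda$ has the homotopy type of a CW-complex, every loop has a neighborhood contractible in $\Lambda$ (Corollary \ref{cor:contractible}), and more quantitatively Theorem \ref{thm:epsilon} together with Corollary \ref{cor:nonzero} should provide an $\varepsilon>0$ such that the sublevel set $\{J<\varepsilon\}$ carries no nonzero class of $\pi_{k-1}(\Lambda)$ (intuitively, it deformation retracts onto the constant loops, i.e. onto $M$, and $\pi_{k-1}(M)=0$). Hence any $f\in\alpha$ must have $\sup_\theta J(f(\theta))\ge\varepsilon$, giving $c_\alpha\ge\varepsilon>0$.

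Then I would run the standard min-max scheme on the Hilbert manifold-with-singularities $\Lambda$ with the functional $J|_\Lambda$. The two ingredients are: (i) the Palais--Smale condition, which in the contact case is exactly Theorem \ref{thm:PS} — this is the heavy analytic input, handling sequences of approximate critical points that may drift toward the abnormal/constant locus via the blow-up argument in good coordinates; and (ii) the Deformation Lemma, which follows from Palais--Smale plus a pseudo-gradient flow argument, with the caveat that one must deform \emph{within} $\Lambda$ (respecting the horizontality constraint at every time slice), again using the local contractibility / local structure of $\Lambda$ near its singular points to build an admissible deformation. Combining (i) and (ii) in the usual way: if $c_\alpha$ were not a critical value, the Deformation Lemma would push $\{J\le c_\alpha+\delta\}$ into $\{J\le c_\alpha-\delta\}$, contradicting the definition of $c_\alpha$ as an infimum over the class $\alpha$; hence there is a critical point $\gamma$ of $J|_\Lambda$ with $J(\gamma)=c_\alpha>0$. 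By Proposition \ref{prop:CharClosedG}, such a solution of the Lagrange multiplier equation is a sub-riemannian geodesic, and since $J(\gamma)=c_\alpha>0$ it is non-constant, which completes the proof.

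The main obstacle, as the authors themselves flag, is establishing the Palais--Smale condition near the singular (abnormal/constant) locus: the constraint set $\Lambda$ is not a manifold there, so the usual Lagrange-multiplier/pseudo-gradient construction degenerates, and one needs the delicate blow-up in good coordinates of Theorem \ref{thm:PS} to control the behavior of nearly-critical sequences. A secondary but genuine difficulty is making the Deformation Lemma rigorous on a singular space while keeping all intermediate curves horizontal; here the CW-homotopy-type and the quantitative local contractibility of $\Lambda$ from Part I are what make the construction go through, together with the positivity $c_\alpha>0$ which keeps the relevant sublevel sets away from the constant loops.
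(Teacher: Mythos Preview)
Your proposal is correct and follows essentially the same route as the paper: split on $\pi_1(M)$, and in the simply connected case take the smallest $k\ge 2$ with $\pi_k(M)\neq 0$, use Corollary~\ref{cor:nonzero} to get $c_\alpha>0$ for a nonzero $\alpha\in\pi_{k-1}(\Lambda)$, then invoke the min-max Theorem~\ref{thm:minmax} (Palais--Smale via Theorem~\ref{thm:PS} plus the Deformation Lemma) and Proposition~\ref{prop:CharClosedG}. One small clarification: the Deformation Lemma itself does not need the CW/local-contractibility machinery near the singular locus, since $c_\alpha>0$ keeps the relevant sublevels inside the smooth part $\Lambda^b_0$, where the standard pseudo-gradient theory applies directly.
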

This result  is the counterpart, in contact geometry, of the celebrated Lyusternik-Fet Theorem  \cite{LF} asserting the existence of a closed geodesic in any compact riemannian manifold. The result of  Lyusternik-Fet (obtained in 1951) opened the door to an entire beautiful  research field in mathematics on the existence and multiplicity of closed geodesics in riemannian manifolds. It is our hope (and challenge for the future) that the present paper will serve as a solid  basis in order  to investigate these questions   also in the sub-riemannian setting.

\subsection{Structure of the paper}The first part of Section \ref{sec:general} below is devoted to the general constructions (mostly the study of the topology of the whole horizontal path space). In Section \ref{sec:global} we discuss the properties of what we call the ``global chart'', a useful technical device to reduce the problem when a set of vector fields generating the distribution is given. The Hurewicz fibration property is Theorem \ref{thm:hurewicz}, and the result on the homotopy of the loop space is a combination of Theorem \ref{thm:homotopy} and Theorem \ref{thm:i*iso}. Closed sub-riemannian geodesics are introduced in Section \ref{sec:geo} and the existence of a minimizer for each nonzero homotopy class in $\pi_1(M)$ is proved in Theorem \ref{thm:pi1}. Section \ref{sec:contact} is devoted to the contact case. The Palais-Smale property is discussed in  Section \ref{sec:PS} and the deformation Lemma and the min-max procedure in Section \ref{sec:min-max}. The existence of a closed sub-riemannian geodesic in the compact, contact case is proved in Theorem \ref{thm:ExMain}.

\subsection{Acknowledgements} 
Most of the research presented in this paper has been developed  while the first author was visiting the {\it Forschungsinstitut f\"ur Mathematik} at the ETH Z\"urich and  the second author was supported by ETH and  SNSF. The authors would like  to thank the Institut for the hospitality and the excellent working conditions. In the final steps of the revision, the second author was supported by the EPSRC First Grant EP/R004730/1.
\\The quality of this manuscript improved after the
revision made by the reviewer, whose careful reading has been extremely helpful
for the authors.

\section{General theory}\label{sec:general}
In this section we discuss the relationship between the topology of the manifold and the horizontal path space; the theory applies to sub-riemannian manifolds in general (we will restrict to contact manifolds only in the next section).	
	\subsection{The horizontal path space}
Let $M$ be a smooth manifold of dimension $m$  and $\Delta\subset TM$ be a smooth subbundle of rank $k$. The horizontal path space $\Omega$ is defined by:
\be \Omega=\{\gamma:[0,1]\to M\,|\, \textrm{$\gamma$ is absolutely continuous, $\dot{\gamma}\in \Delta$ a.e. and is $L^2$-integrable}\}.\ee
This definition requires the choice of a sub-riemannian structure on $\Delta$ in order to integrate the square of the norm of $\dot\gamma$, but the fact of being \emph{integrable} is independent of the chosen structure (we refer the reader to \cite{AgrachevBarilariBoscain, Montgomery} for more details).  In the following we set $I=[0,1]$.

\subsection{Topologies on the horizontal path space}
In the sequel we always make the assumption that the H\"ormander condition is satisfied and that the distribution $\Delta$ is bracket-generating (see \cite{AgrachevBarilariBoscain} for the geometrical implications of this assumption). 
For every $\gamma\in \Omega$ the vector bundle $\gamma^*\Delta$ is trivializable, hence there exist smooth time-dependent vector fields $X_1(\cdot, t), \ldots, X_k(\cdot, t)$ and an open neighborhood $W\subset M$ containing $\gamma(I)$ such that for every $t\in I$ and every $x\in W$:
\be \Delta_x=\textrm{span}\{X_1(x, t), \ldots, X_k(x,t)\}\ee
(if a sub-riemannian structure on $\Delta$ has been chosen we can pick these vector fields to be orthonormal for every $t\in I$).

Since $\gamma$ is horizontal, we can write:
\be \dot{\gamma}(t)=\sum_{i=1}^k v_i(t)\, X_i(\gamma(t), t) \quad \text{for a.e. } t\in [0,1], \quad \gamma(0)=y.\ee
Let $U_{(v,y)}=
V\times W\subset L^2(I, \R^k)\times M$ be an open set containing $(v_1, \ldots, v_k, y)$ such that for every $u=(u_1, \ldots, u_k)\in V$ and every $x\in W$ the solution $\gamma_{(u,x)}$ of the Cauchy problem
\be \label{eq:control} \dot x(t)=\sum_{i=1}^ku_i(t) \, X_i(x(t),t),\quad \text{for a.e. } t\in [0,1], \quad  x(0)=x,\ee
is defined at time $t=1$. The vector function $u$ is usually called the \emph{control}.

Consider the set of all curves $\{x:I\to M\}$ arising as solutions of $\eqref{eq:control}$ for $(u,x)\in U_{(v,y)}\subset L^2(I, \R^k)\times M $: we declare sets of this type to be a basis of open neighborhoods for a topology on $\Omega$. Note that  the correspondence $(u,x)\mapsto x(\cdot)$ is a homeomorphism (the fact that this map is one-to-one follows from \cite[Appendix E]{Montgomery}, this is the existence and uniqueness for the solution of \eqref{eq:control}). Clearly the topology induced on $\Omega$ depends  on the topology we have fixed on  $L^2(I, \R^k)$. 
If $L^2(I, \R^k)$ is given the strong topology, the resulting topology on the horizontal path space will be denoted by $\Omega^{1,2}$ and called the strong topology; if $L^2(I, \R^k)$ is endowed with the weak topology, the resulting topology will be denoted $(\Omega^{1,2})^{\textrm{weak}}$ and called the weak topology. Unless specified we will work with the strong topology.

\subsection{Hilbert manifold structure on the horizontal path space}
The choice of coordinates $\psi:W\to \R^m$ on a neighborhood in $M$  and of a (possibly time-dependent) trivializing frame $\mathcal{F}=\{X_1, \ldots, X_k\}$ for $\Delta|_{W}$ define an open chart on $\Omega$:
\be \Phi_{(\psi, \mathcal{F})}:U\to L^{2}(I, \R^k)\times \R^m.\ee
Here $U$ consists of those curves that are solutions of the Cauchy problem 
 $$\dot{\gamma}(t)=\sum_{i=1}^k u_i(t) \, X_i(\gamma(t),t),  \quad \text{for a.e. } t\in [0,1], \quad  \gamma(0)=x,$$
  for $(u,x)\in V\times W\subset L^2(I, \R^k)\times \R^m$. The fact that the collection $\{\Phi_{(\psi, \mathcal{F})}\}$ is a Hilbert manifold atlas for $\Omega$ follows from \cite[Appendix E]{Montgomery}.

The \emph{endpoint map} $F$ is defined by:
\be F:\Omega\to M\times M, \quad  \gamma\mapsto (\gamma(0), \gamma(1)).\ee	
Given a (locally) trivializing frame and a coordinate chart on an open set $U\simeq  L^{2}(I, \R^k)$ we denote by:
\be\label{defFxut}
F_x^t(u)=\text{ the solution at time $t$ of the Cauchy problem \eqref{eq:control} for a fixed $x\in W\subset M$.}
\ee
We recall the following result. For the reader's convenience we will give a proof of the first part of the statement  in Lemma \ref{lem:unifconv} below (in the ``global cart'', defined in Section \ref{sec:global}); the second part of the statement is \cite[Theorem 23]{BoarottoLerario}. For more details we refer the reader to \cite{AgrachevBarilariBoscain}. 

\begin{prop}\label{prop:weak}If $\gamma_n\to \gamma$ in $(\Omega^{1,2})^{\textrm{weak}}$, then $\gamma_n$ converges uniformly to $\gamma$. In particular, the map $F:\Omega\to M\times M$ is continuous (smooth indeed) for the strong topology and continuous for the weak topology. Moreover the map $\Omega\to \mathrm{hom}(L^2(I, \R^k)\times \R^m, \R^{m} \times \R^m)$ defined by $\gamma\mapsto d_\gamma F$ is continuous when $\Omega$ is endowed with the weak topology and $\textrm{hom}(L^2(I, \R^k)\times \R^m, \R^{m} \times \R^m)$ the strong topology; the same is true for the map $L^2(I, \R^k)\times \R^m\times \R\to \mathrm{hom}(L^2(I, \R^k), \R^m)$ defined by $(u,x, T)\mapsto d_uF_x^T$ (weak topology on the source, strong topology on the target). 

\end{prop}

\subsection{Sub-riemannian structures and the Energy of a path} A sub-riemannian structure on $(M,\Delta)$ is a riemannian metric on $\Delta$, i.e. a scalar product on $\Delta$ which smoothly depends on the base point. If $\Delta$ is endowed with a sub-riemannian structure, we can define the Energy:
\be J:\Omega \to \R, \quad J(\gamma)=\frac{1}{2}\int_{I}|\dot \gamma(t)|^2 dt.\ee
The Energy is a smooth map on $\Omega$, but it is only lower semincontinous on $(\Omega^{1,2})^\textrm{weak}$.  If in the above construction $\{X_i\}_{i=1,\ldots,k}$ was chosen orthonormal, in local coordinates we have $J(\gamma)=\frac{1}{2}\|u\|^2$ and  its differential is given by $d_{(u,x)}J=\langle u, \cdot \rangle$, where of course $\| \cdot\|$ denotes the $L^2$-norm and $\langle \cdot, \cdot \rangle$ the $L^2$-scalar product; by a slight abuse of notation (identifying a Hilbert space with its dual) we will sometimes simply write $d_{(u,x)}J=(u,0)\in L^{2}(I, \R^k)\times \R^m$.

\subsection{The global ``chart'' and the minimal control}\label{sec:global}Assume $M$ is a compact manifold. Given a sub-riemannian structure on $\Delta\subset TM$, there exists a family of vector fields $X_1, \ldots, X_l$ with $l\geq k$ globally defined on $M$ such that:
\be \Delta_x=\textrm{span}\{X_1(x), \ldots, X_l(x)\}, \qquad \forall x\in M.\ee
Moreover the previous family of vector fields can be chosen such that for all $x\in M$ and $u\in \Delta_x$ we have \cite[Corollary 3.26]{AgrachevBarilariBoscain}:
\be\label{eq:uminglob}
|u|^2=\inf \left\{u_1^2+\cdots + u_l^2\,\bigg|\, u=\sum_{i=1}^l u_i X_i(x)\right\},
\ee
where $|\cdot|$ denotes the modulus w.r.t. the fixed sub-riemannian structure.
Denoting by $\mathcal{U}=L^2(I, \R^l) \times M$,  we define the map $A: {\mathcal U}\to \Omega$ by:
\be\label{eq:defA}
 A(u,x)=\textrm{the curve solving the Cauchy problem $\dot\gamma =\sum_{i=1}^lu_i(t)X_i(\gamma(t))$ and $\gamma(0)=x$}.
 \ee
(We use the compactness of $M$ to guarantee that the solution of the Cauchy problem is defined for all $t\in [0,1]$). 

We will consider this construction fixed once and for all,  and call it the ``global chart''. The endpoint map for this global chart will be denoted by:
\be\label{eq:globalendpoint} 
\tilde F:\mathcal{U}\to M\times M, \quad \tilde F(u,x)=F(A(u,x)).
\ee
We also set:
 \be\label{defFxu}
\tilde F_x^t:  \mathcal{U}\to M , \quad \tilde F_x^t (u,x)=F_x^t(A(u,x))=A(u,x) (t).
 \ee
 In words, $\tilde F_x^t (u,x)$ is the evaluation at time $t$ of the curve  $A(u,x)$ defined in  \eqref{eq:defA}.

The map $A$ is continuous (both for the strong and the weak topologies on $L^{2}(I, \R^k)$) and has a right inverse $\mu:\Omega \to \mathcal{U}$ defined by:
\be \mu(\gamma)=(u^*(\gamma), \gamma(0)),\ee
where $u^*(\gamma)$ is the control realizing the minimum of $\|\cdot\|^2$ on $A^{-1}(\gamma)$. This control is called the \emph{minimal control}  \cite[Section 3.1.1]{AgrachevBarilariBoscain}; its existence and uniqueness  follows from next lemma.

\begin{lemma}\label{lem:AConvex}
The set $A^{-1}(\gamma)=\{u\in L^2(I, \R^l)\,|\, \dot \gamma=\sum_i u_i X_i\}\times \{\gamma(0)\}$ is convex and closed.
\end{lemma}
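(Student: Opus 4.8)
The statement to prove is that for a fixed horizontal curve $\gamma$, the set $A^{-1}(\gamma)=\{u\in L^2(I,\R^l)\,|\,\dot\gamma=\sum_i u_i X_i\}\times\{\gamma(0)\}$ is convex and closed in $L^2(I,\R^l)\times M$. The plan is to reduce everything to a pointwise (in $t$) linear-algebra statement about the fibers of the bundle surjection $\R^l\to\Delta_{\gamma(t)}$, and then upgrade from pointwise to $L^2$.

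First I would fix $\gamma$ and observe that the $M$-component is the single point $\gamma(0)$, so it suffices to treat the $L^2$-component $S_\gamma=\{u\in L^2(I,\R^l)\,|\,\sum_{i=1}^l u_i(t) X_i(\gamma(t))=\dot\gamma(t)\text{ for a.e. }t\}$. For \emph{convexity}: if $u,w\in S_\gamma$ and $\lambda\in[0,1]$, then for a.e.\ $t$ we have $\sum_i(\lambda u_i(t)+(1-\lambda)w_i(t))X_i(\gamma(t))=\lambda\dot\gamma(t)+(1-\lambda)\dot\gamma(t)=\dot\gamma(t)$, using only that the constraint $\sum_i u_i(t)X_i(\gamma(t))=\dot\gamma(t)$ is, at each fixed $t$, an affine condition on the vector $(u_1(t),\dots,u_l(t))\in\R^l$ (the map $(a_1,\dots,a_l)\mapsto\sum_i a_i X_i(\gamma(t))$ is linear into $T_{\gamma(t)}M$). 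So convexity is immediate and costs nothing.

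For \emph{closedness}: suppose $u^{(n)}\to u$ in $L^2(I,\R^l)$. I would pass to a subsequence converging pointwise a.e.\ to $u$, say on a full-measure set $E\subset I$ (and I may assume $\dot\gamma(t)=\sum_i u^{(n)}_i(t)X_i(\gamma(t))$ holds for all $n$ and all $t$ in a common full-measure set, shrinking $E$ if necessary). Then for $t\in E$, continuity of the linear map $(a_i)\mapsto\sum_i a_i X_i(\gamma(t))$ on $\R^l$ gives $\sum_i u_i(t)X_i(\gamma(t))=\lim_n\sum_i u^{(n)}_i(t)X_i(\gamma(t))=\dot\gamma(t)$, so $u\in S_\gamma$ and $S_\gamma$ is closed. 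Since $S_\gamma\times\{\gamma(0)\}$ is a closed subset of $L^2(I,\R^l)\times M$ (product of a closed set with a point), we are done. The only mildly delicate point — and the one I'd expect to be the "main obstacle," though it is routine — is the measurability/integrability bookkeeping: one must make sure the vector fields $X_i$ evaluated along the absolutely continuous curve $\gamma$ give measurable coefficients, so that the constraint defining $S_\gamma$ really is a closed affine subspace of $L^2$; this is handled by the smoothness of the $X_i$ and the fact that $\gamma$ is absolutely continuous (hence measurable with a.e.-defined measurable derivative $\dot\gamma$, which by horizontality takes values in $\Delta$). Existence and uniqueness of the minimal control $u^*(\gamma)$ then follow formally: $S_\gamma$ is a nonempty (it contains the minimal control coming from $\eqref{eq:uminglob}$, or simply any control representing $\gamma$), closed, convex subset of the Hilbert space $L^2(I,\R^l)$, hence has a unique element of minimal norm by the Hilbert projection theorem.
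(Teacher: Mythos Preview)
Your proof is correct. The convexity argument is essentially identical to the paper's: both simply observe that the constraint $\sum_i u_i(t)X_i(\gamma(t))=\dot\gamma(t)$ is affine in $u$ at each $t$, so convex combinations remain admissible.

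For closedness the approaches differ. The paper invokes the continuity of the map $A:\mathcal U\to\Omega$ (established earlier via ODE theory) and concludes in one line that the preimage of the single curve $\gamma$ is closed. You instead argue directly at the level of $L^2$: pass to an a.e.-convergent subsequence and check the affine constraint pointwise. Your route is more self-contained and elementary, avoiding any appeal to the well-posedness of the Cauchy problem and the resulting continuity of $A$; the paper's route is shorter given the machinery already in place. Both yield the same conclusion, and your closing remark on the Hilbert projection theorem correctly anticipates the existence and uniqueness of the minimal control that the paper states immediately after the lemma.
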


\begin{proof}
Since $A$ is continuous, then $A^{-1}(\gamma)$ is closed. Moreover if $(u, \gamma(0)), (v, \gamma(0))\in A^{-1}(\gamma)$ then:
\begin{align} \dot \gamma&=\lambda  \dot \gamma+(1-\lambda)\dot \gamma=\lambda \sum_{i=1}^l u_iX_i+(1-\lambda) \sum_{i=1}^l v_iX_i\\
&= \sum_{i=1}^l (\lambda u_i+(1-\lambda)v_i)X_i,\end{align}
which means that the convex combination $\lambda u+ (1-\lambda) v$ is still an element of $A^{-1}(\gamma)$.
\end{proof}
It is also useful to give a pointwise characterization of the minimal control. This is the aim of the next lemma.

\begin{lemma}\label{lem:Pointwiseu*}
Let  $\gamma \in \Omega$ be an admissible curve and let $u^*$ be the associated  minimal control.  Then for a.e.  $t \in I$ (more precisely at every point of differentiability of $\gamma$)  $u^*(t)$ is uniquely characterized as
\be\label{eq:Pointwiseu*}
u^*(t) = \argmin \left\{ |v|_{\R^l} \,: \, v \in \R^l \text{ satisfies }  \dot{\gamma}(t)= \sum_{i=1} ^l v_i \, X_i(\gamma(t)) \right\},
\ee
where $|\cdot|_{\R^l}$ denotes the euclidean norm in $\R^l$. 
\end{lemma}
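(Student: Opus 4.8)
The plan is to characterize the minimal control pointwise by exploiting the convexity already established in Lemma~\ref{lem:AConvex} together with a measurable-selection/Lebesgue-point argument. Denote by $w(t)$ the function defined by the right-hand side of \eqref{eq:Pointwiseu*}, i.e.\ for a.e.\ $t$ the unique (by strict convexity of the Euclidean norm on the affine subspace $\{v:\dot\gamma(t)=\sum_i v_iX_i(\gamma(t))\}$) minimizer of $|v|_{\R^l}$ subject to that linear constraint. The existence and uniqueness of $w(t)$ for every point of differentiability of $\gamma$ is elementary: the constraint set is a nonempty closed affine subspace of $\R^l$, and the norm is strictly convex and coercive, so it has a unique point of minimal norm (the orthogonal projection of the origin). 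First I would check that $w$ is a measurable function of $t$: this follows because $t\mapsto(\dot\gamma(t),\gamma(t))$ is measurable and $w(t)$ depends on this data through the orthogonal projection onto the kernel of the (measurably varying) linear map, which is a measurable operation; alternatively one invokes a standard measurable-selection theorem for the set-valued map $t\rightrightarrows\{v:\dot\gamma(t)=\sum v_iX_i(\gamma(t))\}$.

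The second step is to show $w\in L^2(I,\R^l)$, which is where the special choice of frame in \eqref{eq:uminglob} enters. By that pointwise identity we have $|w(t)|_{\R^l}^2=|\dot\gamma(t)|^2$ for a.e.\ $t$ (the infimum over representations of $\dot\gamma(t)$ is exactly the sub-riemannian norm squared, and $w(t)$ realizes it), and $\dot\gamma\in L^2$ by definition of $\Omega$; hence $w\in L^2(I,\R^l)$. In particular $(w,\gamma(0))\in A^{-1}(\gamma)$, so $w$ is a legitimate control representing $\gamma$, and $\|w\|^2=\int_I|\dot\gamma(t)|^2\,dt=2J(\gamma)$.

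The third and main step is to prove that $w$ is the \emph{minimal} control, i.e.\ $w=u^*$. Let $u\in A^{-1}(\gamma)$ be arbitrary. For a.e.\ $t$ both $u(t)$ and $w(t)$ lie in the affine constraint set at $t$, and $w(t)$ is by definition the element of minimal Euclidean norm there; thus $|w(t)|_{\R^l}\le|u(t)|_{\R^l}$ pointwise a.e., whence $\|w\|\le\|u\|$. This already shows $w$ achieves the minimum of $\|\cdot\|^2$ on $A^{-1}(\gamma)$; by the uniqueness of the minimal control (Lemma~\ref{lem:AConvex}, strict convexity of the $L^2$-norm on the closed convex set $A^{-1}(\gamma)$) we conclude $w=u^*$ as elements of $L^2$, hence $u^*(t)=w(t)$ for a.e.\ $t$. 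Finally, for the uniqueness assertion in the statement: at any point of differentiability of $\gamma$, $u^*(t)$ must lie in the affine constraint set (since a.e.\ it represents $\dot\gamma(t)$), and among all such points $w(t)$ is the unique one of minimal norm; combined with $u^*(t)=w(t)$ a.e.\ this gives the pointwise characterization \eqref{eq:Pointwiseu*}.

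I expect the only genuinely delicate point to be the measurability of $t\mapsto w(t)$, which must be handled carefully so that the pointwise inequality $|w(t)|\le|u(t)|$ can be integrated; the rest is soft convexity. An alternative route that avoids measurable selection altogether is to start from $u^*$ (whose existence is already guaranteed) and argue by contradiction: if on a positive-measure set $u^*(t)$ were not the minimal-norm representative of $\dot\gamma(t)$, one could replace $u^*(t)$ there by the orthogonal projection onto the constraint set (a measurable modification since it is determined by the measurable data $\dot\gamma,\gamma$), strictly decreasing $\|\cdot\|^2$ while staying in $A^{-1}(\gamma)$, contradicting minimality. This phrasing folds the measurability issue into a modification of the already-measurable $u^*$ and is perhaps the cleanest; I would present that version.
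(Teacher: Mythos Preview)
Your proof is correct and follows essentially the same route as the paper: define the pointwise minimal-norm representative, establish its measurability (the paper cites \cite[Lemma 3.11]{AgrachevBarilariBoscain} for this, while you argue directly via the measurable dependence of the orthogonal projection), show it lies in $L^2$ and belongs to $A^{-1}(\gamma)$, and conclude by uniqueness of the $L^2$-minimizer. The only cosmetic difference is that the paper bounds $|\bar u(t)|\le |u^*(t)|$ against the already-existing minimal control to get $L^2$-integrability, whereas you invoke \eqref{eq:uminglob} to identify $|w(t)|^2=|\dot\gamma(t)|^2$ directly; both are fine.
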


\begin{proof}
First of all, analogously to the  the proof of  Lemma  \ref{lem:AConvex}, one can show  that $\big \{ v \in \R^l \,:\,   \dot{\gamma}(t)= \sum_{i=1} ^l v_i \, X_i(\gamma(t)) \big\}$ is a closed convex subset of $\R^l$, so there exists a unique element of minimal norm. This shows the existence \& uniqueness of the minimizer in  \eqref{eq:Pointwiseu*}. For the moment let us denote with $\bar{u} (t)$ such a  minimizer; we are then left to prove that $\bar{u}(t)=u^*(t)$ for a.e. $t \in I$.  The measurability of $\bar{u}$ is proved in \cite[Lemma 3.11]{AgrachevBarilariBoscain}, moreover  by the definition of $\bar{u}$ we have:
$$|\bar{u}(t)|_{\R^l} \leq |u^*(t)|_{\R^l} \quad \text{for  a.e. } t \in [0,1],$$
which gives that $\bar{u} \in L^2(I, \R^l)$ with $\| \bar{u}\| \leq \|u^*\|$. Since by construction  $\bar{u}\in A^{-1}(\gamma)$, the definition of $u^*$ as unique element of minimal $L^2$-norm  implies that $\bar{u}=u^*$ a.e. . 
 \end{proof}
Notice that the combination  of \eqref{eq:uminglob} and Lemma \ref{lem:Pointwiseu*} yields: 
\be \label{eq:Ju*}
 J(\gamma)=\frac{1}{2}\|u^*(\gamma)\|^2,
\ee
fact which will be useful in the next proposition.

\begin{prop}\label{prop:mu}
The map $\mu: \Omega\to {\mathcal U}= L^2(I, \R^l) \times M$ is strong-strong continuous.
\end{prop}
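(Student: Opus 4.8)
The plan is to show that if $\gamma_n\to\gamma$ strongly in $\Omega^{1,2}$, then the minimal controls satisfy $u^*(\gamma_n)\to u^*(\gamma)$ strongly in $L^2(I,\R^l)$ (the convergence of base points $\gamma_n(0)\to\gamma(0)$ being immediate from uniform convergence, Proposition \ref{prop:weak}). Since we are working in the global chart, write $\gamma_n=A(u_n,x_n)$ for the controls $u_n$ defining the convergence; these need not be the minimal ones, but by definition of $\mu$ we have $\|u^*(\gamma_n)\|\le\|u_n\|$, and since $u_n\to u$ strongly (that is what strong convergence $\gamma_n\to\gamma$ in the chart means, up to passing to a single chart containing $\gamma$, which is legitimate by the local homeomorphism) the sequence $\{u^*(\gamma_n)\}$ is bounded in $L^2$.

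The core of the argument is then a weak-to-strong upgrade. First extract a weakly convergent subsequence $u^*(\gamma_{n_j})\weakto w$ in $L^2(I,\R^l)$. By Proposition \ref{prop:weak}, weak convergence of controls forces uniform convergence of the corresponding curves, and since those curves are exactly $\gamma_{n_j}\to\gamma$ uniformly, the weak limit $w$ must satisfy $\dot\gamma=\sum_i w_i X_i(\gamma)$; that is, $w\in A^{-1}(\gamma)$ (using that $A^{-1}(\gamma)$ is weakly closed, being closed and convex by Lemma \ref{lem:AConvex}). By weak lower semicontinuity of the norm, $\|w\|\le\liminf_j\|u^*(\gamma_{n_j})\|\le\liminf_j\|u_{n_j}\|=\|u\|$. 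But $u^*(\gamma)$ is the \emph{unique} element of minimal norm in $A^{-1}(\gamma)$, and in fact — using \eqref{eq:Ju*} and the fact that $J$ is continuous on $\Omega$ in the strong topology — we get $\|w\|^2\ge\|u^*(\gamma)\|^2=2J(\gamma)=2\lim_j J(\gamma_{n_j})=\lim_j\|u^*(\gamma_{n_j})\|^2$. Hence $\|w\|=\|u^*(\gamma)\|$, forcing $w=u^*(\gamma)$ by uniqueness of the minimizer. Combining $u^*(\gamma_{n_j})\weakto u^*(\gamma)$ with $\|u^*(\gamma_{n_j})\|\to\|u^*(\gamma)\|$ gives strong convergence $u^*(\gamma_{n_j})\to u^*(\gamma)$ in the Hilbert space $L^2$.

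Finally, to promote subsequential convergence to full-sequence convergence, run the usual Urysohn-type argument: every subsequence of $\{u^*(\gamma_n)\}$ has a further subsequence converging strongly to the \emph{same} limit $u^*(\gamma)$ (the identification of the limit above did not depend on the subsequence chosen), so the whole sequence converges. Together with $\gamma_n(0)\to\gamma(0)$ in $M$ this yields $\mu(\gamma_n)=(u^*(\gamma_n),\gamma_n(0))\to(u^*(\gamma),\gamma(0))=\mu(\gamma)$ in $\mathcal{U}=L^2(I,\R^l)\times M$, which is the claimed strong-strong continuity.

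I expect the main technical obstacle to be making precise the step where strong convergence $\gamma_n\to\gamma$ in $\Omega^{1,2}$ is translated into strong convergence of a chosen family of representing controls $u_n\to u$ with $\|u_n\|\to\|u\|=\|u^*(\gamma)\|$: a priori the $u_n$ live in possibly different local charts, and one must argue that for $n$ large all $\gamma_n$ lie in a single chart around $\gamma$ and that the minimal control of $\gamma$ itself can be taken as the reference control $u$ there (so that $J(\gamma_n)=\tfrac12\|u^*(\gamma_n)\|^2\to\tfrac12\|u^*(\gamma)\|^2$ follows from smoothness/continuity of $J$). The rest is the standard ``weak convergence plus convergence of norms implies strong convergence in Hilbert space'' plus a uniqueness-of-minimizer argument, both of which are routine given Lemma \ref{lem:AConvex}, Lemma \ref{lem:Pointwiseu*}, Proposition \ref{prop:weak}, and \eqref{eq:Ju*}.
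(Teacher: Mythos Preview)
Your argument is correct and follows essentially the same route as the paper: extract a weak limit of the minimal controls, identify it as an element of $A^{-1}(\gamma)$ via weak--strong continuity of $A$, then use convergence of norms (coming from continuity of $J$ together with \eqref{eq:Ju*}) plus uniqueness of the minimizer to upgrade weak to strong convergence. The only difference is cosmetic: your opening detour through chart controls $u_n$ to obtain boundedness of $\{u^*(\gamma_n)\}$ is unnecessary, since $\tfrac12\|u^*(\gamma_n)\|^2=J(\gamma_n)\to J(\gamma)$ already gives both boundedness and norm convergence in one stroke---this is exactly how the paper proceeds, and it dissolves the ``technical obstacle'' you flag at the end. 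Your explicit Urysohn subsequence argument is a nice touch that the paper leaves implicit.
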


\begin{proof}
Let $\{\gamma_n\}_{n \in \N} \subset \Omega$ be strongly converging to $\gamma \in \Omega$ and let $u^*_n, u^*_\gamma \in L^2(I, \R^l)$ be the associated minimal controls. First of all it is clear that $\gamma_n(0)\to \gamma(0)$, so we have to prove that $u^*_n \to  u^*_\gamma$ strongly in  $L^2(I, \R^l)$.
\\ On the one hand, the strong convergence combined with \eqref{eq:Ju*}  implies that:
\be\label{u*un*}
\frac 1 2 \|u^*_\gamma\|^2 = J (\gamma)= \lim_{n\to \infty} J (\gamma_n) =\frac 1 2  \lim_{n\to \infty} \|u^*_n\|^2.
\ee
On the other hand, since the sequence $u_n^*$ is bounded in $L^2$, there exists $u_\infty \in L^2(I,\R^l)$ such that $u_n^*$ weakly converges to $u_\infty$ in $L^2(I,\R^l)$.  By the weak-strong continuity of the end point map stated in Proposition \ref{prop:weak} we get that  $A(u_\infty)=\gamma$, or in other terms  $u_\infty \in A^{-1}(\gamma)$. The definition of $u^*_\gamma$ then implies that $\|u^*_\gamma\|\leq  \|u_\infty\|$ with equality if and only if $u^*_\gamma=u_\infty$ a.e. .
\\But  the lower semicontinuity of the $L^2$-norm under weak convergence gives:
$$\|u_\infty\|\leq \liminf_{n \to \infty} \|u_n^*\| = \lim_{n \to \infty} \|u_n^*\|= \|u^*_\gamma\|, $$
where we used \eqref{u*un*}. Therefore $u_\infty=u^*_\gamma$ and $u^*_n \to u^*_\gamma$ strongly in $L^2(I, \R^l)$ as desired.
\end{proof}

\subsection{The loop space}\label{sec:G}
We say that a horizontal curve $\gamma:I \to M$ is \emph{closed} if $\gamma(0)=\gamma(1)$,  in this case $\gamma$ is also called \emph{loop}.
The horizontal basepoint free \emph{loop space} is defined by:
\be \Lambda=F^{-1}(\textrm{diagonal in $M\times M$}).\ee
In local coordinates $U\simeq L^{2}(I, \R^k)\times \R^m$ we have:
\be \Lambda \cap U=\{(u,x)\,|\, F_x^1(u)-x=0\},\ee
where $F_x^t$ was defined in \eqref{defFxu}. We also set:
$$\tilde{\Lambda}=\tilde{F}^{-1} (\textrm{diagonal in $M\times M$}) \subset {\mathcal U},$$
the counterpart of $\Lambda$ in the global chart $ {\mathcal U}=L^2(I, \R^l)\times M$. 
\\The loop space is a closed subset of $\Omega$ both for the strong and the weak topology, but in general it is not a submanifold (the endpoint map may not be a submersion). We say that a loop $\gamma$ is \emph{regular} if: 
\be\label{def:G}
 G(u,x)=F_x^1(u)-x\quad \textrm{ is a submersion at $(u_\gamma, \gamma(0))$},
 \ee
i.e. if $d_{(u_\gamma,\gamma(0))}G:  L^{2}(I, \R^k)\times \R^m \to \R^m$ is surjective.  Note this is in contrast with the  riemannian case, where
 the loop space is a smooth submanifold.   However at least in the contact case we can characterize the set $\textrm{Sing}(\Lambda)$ of its singular points: it coincides with the set of constant curves (see Proposition \ref{prop:singularities} below).
 
\begin{remark}[On the various definitions of endpoint maps]\label{remark:endpoint}It is worth at this point to collect the notations used so far for endpoint maps. On the path space $F:\Omega\to M\times M$ (no coordinates have been fixed) the endpoint map takes a curve $\gamma$ and gives its endpoints $(\gamma(0), \gamma(1))$. We will also use $F^{1}:\Omega\to M$ to denote the map giving only the final point.
Fixed a coordinate chart $U\simeq L^2\times \R^m$, the map $F(u,x)$ gives the final point of the solution of the Cauchy problem \eqref{eq:control}; similarly, fixed just a (locally) trivializing frame for $\Delta$ and charts on an open subset of  $L^2(I, \R^k)$, $F_x^t(u)$ gives the value of the solution at time $t$ of the Cauchy problem \eqref{eq:control}. In the global chart $\mathcal{U}$ the analogous maps $\tilde F$ and $\tilde F_x^t$ are defined by respectively \eqref{eq:globalendpoint} and \eqref{defFxu}. 
Finally notice that all these definitions make sense for $u\in L^2([0, T], \R^k)$ as long as $t\leq T$ (we will be using this observation in the section on the Hurewicz property).
\end{remark}

	\subsection{The Hurewicz fibration property}\label{sec:hurewicz}
Recall that a map $F:X\to Y$ is a Hurewicz fibration \cite{Hurewicz} if it has the homotopy lifting property for every space $Z$: for every homotopy $H:Z\times I\to Y$ and every lift $\tilde H_0:Z\to X$ (i.e.  every map $\tilde H_0:Z\to X$ satisfying  $F(\tilde H_0(\cdot))=H(\cdot,0)$),  there exists a homotopy $\tilde H:Z\times I\to X$ lifting $H$, i.e. $F(\tilde H(z,t))=H(z,t)$ for all $(z,t)\in Z\times I$, with $\tilde{H}(\cdot, 0)=\tilde{H}_0(\cdot)$.

In order to prove that the restricted endpoint map $F|_{\Lambda}:\Lambda \to M$  is a Hurewicz fibration, we will need two technical results from \cite{BoarottoLerario} (strictly speaking, the image of the map $F$ is contained in the diagonal in $M\times M$, but we can identify it with $M$ itself).

 \begin{prop}[\cite{BoarottoLerario}, Proposition 2]\label{propo:cross}Every point in $M$ has a neighborhood $W$ and a continuous map:
\begin{align}\hat{\sigma}: W\times W&\to L^{2}([0, \infty), \R^l)\times \R \\
(x,y)&\mapsto (\sigma(x,y), T(x,y)) \end{align}
such that  
$F^{T(x,y)}_x(\sigma(x,y))=y$ and $\hat{\sigma}(x,x)=(0,0)$ for every $x,y\in W$.\end{prop}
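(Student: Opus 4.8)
The plan is to construct $\hat\sigma$ by hand out of a finite-dimensional family of horizontal paths, obtained by concatenating flows of the frame vector fields and of suitable iterated brackets (realized by ``commutator words''), and then to solve the equation $F_x^{T}(\sigma)=y$ by an inverse-function argument with $x$ as a parameter. Fix $p\in M$. Working in a chart near $p$, choose a frame $X_1,\dots,X_l$ spanning $\Delta$ in a neighbourhood of $p$. Since $\Delta$ is bracket-generating, I can select iterated Lie brackets $Z_1,\dots,Z_m$ of the $X_i$'s, with $Z_j$ of bracket-length $r_j$, such that $Z_1(p),\dots,Z_m(p)$ is a basis of $T_pM$; by continuity this persists on a neighbourhood of $p$.

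For each $j$ I would fix a ``commutator word'' $\tau\mapsto\Theta_j^{\tau}$ ($\tau\ge0$), a composition of flows of the form $e^{\pm\tau X_i}$, which is smooth in $(\tau,x)$, equals the identity at $\tau=0$, and obeys the classical ball--box expansion $\Theta_j^{\tau}(x)=x+\tau^{r_j}Z_j(x)+O(\tau^{r_j+1})$ uniformly for $x$ near $p$, together with a reversed word realizing $-\tau^{r_j}Z_j(x)$. Reparametrising, for $\eta_j\in\R$ I set $\Xi_j^{\eta_j}$ to be the appropriate (possibly reversed) word run for time $|\eta_j|^{1/r_j}$, so that $\Xi_j^{0}=\mathrm{id}$ and $\Xi_j^{\eta_j}(x)=x+\eta_jZ_j(x)+o(|\eta_j|)$ uniformly in $x$; crucially $\Xi_j^{\eta_j}$ corresponds to an explicit step-function control supported on an interval of length proportional to $|\eta_j|^{1/r_j}$, with values among $0,\pm e_1,\dots,\pm e_l$. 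I then define $\Psi(\eta,x)=\Xi_m^{\eta_m}\circ\cdots\circ\Xi_1^{\eta_1}(x)$ for $\eta=(\eta_1,\dots,\eta_m)$ near $0$ and $x$ near $p$, so that $\Psi(0,x)=x$ and $\Psi(\eta,x)=x+\sum_j\eta_jZ_j(x)+o(|\eta|)$, the linear part being an isomorphism (the matrix with columns $Z_j(x)$) for $x$ near $p$. A quantitative inverse-function/degree argument then shows that $\eta\mapsto\Psi(\eta,x)$ is a homeomorphism of a neighbourhood of $0$ onto a neighbourhood of $x$, with inverse depending continuously on $x$ and on the target point; shrinking, I obtain a neighbourhood $W$ of $p$ and a continuous map $(x,y)\mapsto\eta(x,y)$ on $W\times W$ with $\Psi(\eta(x,y),x)=y$ and $\eta(x,x)=0$ (the last by uniqueness, since $\Psi(0,x)=x$). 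Finally I let $\sigma(x,y)$ be the concatenation of the step-function controls of $\Xi_1^{\eta_1(x,y)},\dots,\Xi_m^{\eta_m(x,y)}$, extended by $0$ to $[0,\infty)$, and $T(x,y)$ the total length of its support; then $F_x^{T(x,y)}(\sigma(x,y))=\Psi(\eta(x,y),x)=y$.

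The normalization is then immediate: $\eta(x,x)=0$ forces every segment of $\sigma(x,x)$ to have zero duration, hence $\hat\sigma(x,x)=(0,0)$. Continuity of $\hat\sigma\colon W\times W\to L^2([0,\infty),\R^l)\times\R$ is routine: the breakpoints of the step function $\sigma(x,y)$ vary continuously with $\eta(x,y)$, hence with $(x,y)$, and the whole support shrinks to a point as $(x,y)$ tends to the diagonal, so the $L^2$-distance between nearby controls goes to $0$. The main obstacle is the analytic input behind the second paragraph: establishing the commutator-word expansions $\Theta_j^{\tau}(x)=x+\tau^{r_j}Z_j(x)+O(\tau^{r_j+1})$ uniformly in $x$ — the quantitative form of Chow--Rashevskii underlying the ball--box theorem — and then upgrading ``local homeomorphism for each fixed $x$'' to a homeomorphism with jointly continuous inverse up to the diagonal, where $\eta\mapsto\Psi(\eta,x)$ is only Hölder (not $C^1$) at $\eta=0$, so the classical inverse function theorem does not apply and one must argue topologically.
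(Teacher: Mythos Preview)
The paper does not prove this proposition: it is quoted from \cite{BoarottoLerario} (Proposition~2 there) and invoked as a black box in the proof that $\tilde F|_{\tilde\Lambda}$ is a Hurewicz fibration. There is therefore no ``paper's own proof'' to compare against.

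Your outline is the standard constructive argument behind a quantitative Chow--Rashevskii theorem with continuous dependence on endpoints: build a finite-dimensional family of horizontal paths from commutator words in the frame flows, show that the resulting map $\eta\mapsto\Psi(\eta,x)$ covers a neighbourhood of $x$, invert it continuously in $(x,y)$, and read off $(\sigma,T)$ from the concatenated step-function control. The two technical points you isolate --- the uniform commutator expansions $\Theta_j^{\tau}(x)=x+\tau^{r_j}Z_j(x)+O(\tau^{r_j+1})$ and the failure of $C^1$-regularity of $\Psi$ at $\eta=0$ --- are exactly the places where work is required, and you are right that the inversion must be done topologically (degree or Brouwer) rather than by the classical inverse function theorem. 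One small point worth making explicit: continuity of $(x,y)\mapsto\sigma(x,y)$ into $L^2$ uses not only that the breakpoints move continuously but that the step values are uniformly bounded, so that an $L^2$ variation is controlled by the variation of the breakpoints; this is implicit in your ``routine'' claim but is the reason the argument would fail in, say, $L^\infty$. Modulo filling in these standard details, the approach is sound and is essentially the one carried out in \cite{BoarottoLerario}.
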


\begin{prop}[\cite{BoarottoLerario}, Proposition 3] \label{propo:concatenation}The map $\mathcal C: L^2(I)\times L^2([0,+\infty))\times \R\to L^2(I)$ defined below is continuous:
		\[
		\mathcal C(u,v,T)(t)=\left\{
		\begin{alignedat}{9}
		& (T+1)u(t(T+1))\quad&& 0\leq t< \frac{1}{T+1}\\
		& (T+1)v((T+1)t-1),\quad&& \frac{1}{T+1}< t\leq 1.
		\end{alignedat}
		\right.
		\]	
	Moreover (extending the definition componentwise to controls with value in $\R^l$) we also have $F_x^{1+T}(u*v)=F_x^1(\mathcal{C}(u,v,T))$ for every $x\in M$ (here $u*v$ denotes the usual concatenation).
		 \end{prop}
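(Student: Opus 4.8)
The plan is to prove the two assertions separately. The identity $F_x^{1+T}(u*v)=F_x^1(\mathcal C(u,v,T))$ is a soft consequence of the invariance of control-affine trajectories under reparametrization of time, while the continuity of $\mathcal C$ on $L^2(I)\times L^2([0,+\infty))\times\R$ is the substantial point; for the latter I would split $\mathcal C=\mathcal C_1+\mathcal C_2$, where $\mathcal C_1(u,v,T)$ agrees with $\mathcal C(u,v,T)$ on $[0,\tfrac1{T+1})$ and vanishes on $[\tfrac1{T+1},1]$, and $\mathcal C_2(u,v,T)$ is defined symmetrically. Note that $\mathcal C_1$ depends only on $(u,T)$ and is linear in $u$, and $\mathcal C_2$ depends only on $(v,T)$ and is linear in $v$, so it is enough to prove the continuity of each piece.

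For the identity, I would first observe that $\mathcal C(u,v,T)$ is exactly the control obtained from the concatenation $u*v\in L^2([0,1+T],\R^l)$ by the linear time change $\phi:[0,1]\to[0,1+T]$, $\phi(s)=(T+1)s$: since $\phi'\equiv T+1$, the rescaled control $s\mapsto(T+1)\,(u*v)(\phi(s))$ equals $(T+1)u((T+1)s)$ for $s<\tfrac1{T+1}$ and $(T+1)v((T+1)s-1)$ for $s>\tfrac1{T+1}$, which is the displayed formula. Now if $x(\cdot)$ solves $\dot x(t)=\sum_i(u*v)_i(t)X_i(x(t))$ on $[0,1+T]$ with $x(0)=x$, then $y(s):=x(\phi(s))$ satisfies $\dot y(s)=\phi'(s)\sum_i(u*v)_i(\phi(s))X_i(y(s))=\sum_i\mathcal C(u,v,T)_i(s)\,X_i(y(s))$ with $y(0)=x$; by uniqueness of solutions of the Cauchy problem \eqref{eq:control} (cf. \cite[Appendix E]{Montgomery}) the curve $y$ is precisely the trajectory of the control $\mathcal C(u,v,T)$, and evaluating at $s=1$ gives $F_x^1(\mathcal C(u,v,T))=x(1+T)=F_x^{1+T}(u*v)$.

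For the continuity of $\mathcal C_1$ I would start from the elementary identity, obtained by the substitution $s=(T+1)t$,
\[
\|\mathcal C_1(u,T)\|_{L^2(I)}^2=\int_0^{1/(T+1)}(T+1)^2|u((T+1)t)|^2\,dt=(T+1)\int_0^1|u(s)|^2\,ds ,
\]
so that the linear operators $u\mapsto\mathcal C_1(u,T)$ are uniformly bounded for $T$ in any bounded set. Given $(u_n,T_n)\to(u,T)$, I would write
\[
\mathcal C_1(u_n,T_n)-\mathcal C_1(u,T)=\mathcal C_1(u_n-u,T_n)+\big(\mathcal C_1(u,T_n)-\mathcal C_1(u,T)\big);
\]
the first term has $L^2$-norm $\le\sqrt{T_n+1}\,\|u_n-u\|\to0$ by the displayed identity, while the second tends to $0$ because, for $u$ fixed, $T\mapsto\mathcal C_1(u,T)$ is continuous. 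This last fact is the standard continuity of $L^2$-dilations in the dilation parameter: it is immediate (uniform continuity together with dominated convergence, all supports lying in a fixed compact interval) when $u$ is continuous with compact support in the interior of $I$, and it passes to arbitrary $u\in L^2$ by an $\varepsilon/3$ argument using the uniform bound above. The piece $\mathcal C_2$ is treated in the same way, using the substitution $r=(T+1)t-1$ which gives $\|\mathcal C_2(v,T)\|_{L^2(I)}^2=(T+1)\int_0^T|v(r)|^2\,dr\le(T+1)\|v\|_{L^2([0,+\infty))}^2$; the only extra ingredient is that, for $v\in L^2([0,+\infty))$ fixed, the truncation $v|_{[0,T']}$ converges to $v|_{[0,T]}$ in $L^2$ as $T'\to T$, which combines with the dilation continuity to give continuity of $T\mapsto\mathcal C_2(v,T)$. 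Summing, $\mathcal C=\mathcal C_1+\mathcal C_2$ is continuous, and the extension to $\R^l$-valued controls is coordinatewise.

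The main difficulty is precisely the density step for the continuity in $T$: the natural family of dilation operators is uniformly bounded only over bounded $T$-intervals, so the $\varepsilon/3$ argument has to be run together with the explicit norm identities above, and one must keep $T+1$ bounded away from $0$ to prevent the time change $\phi$ from degenerating — which is harmless here since continuity is a local statement in $T$ and in all our applications $T\ge 0$.
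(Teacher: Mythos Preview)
Your argument is correct. The paper itself does not prove this proposition: it is quoted verbatim from \cite[Proposition~3]{BoarottoLerario} and used as a black box, so there is no proof in the paper to compare against. Your approach --- splitting $\mathcal C=\mathcal C_1+\mathcal C_2$, exploiting the linearity in $u$ (resp.\ $v$) together with the exact norm identity $\|\mathcal C_1(u,T)\|^2=(T+1)\|u\|^2$ to separate the variation in the control from the variation in $T$, and then handling the $T$-continuity for fixed control by a density-plus-uniform-boundedness argument --- is a clean and standard way to establish continuity of such rescaling operators. The reparametrization argument for the endpoint identity is likewise the natural one.
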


We will also need the following variation of the map $\mathcal{C}$ defined above:
\be \mathcal{C}^-:L^2([0, \infty))\times L^2(I)\times \R\to L^2(I).\ee
In order to define $\mathcal{C}^-$ we first define for a control $u\in L^2([0, T], \R^l)$ the backward control $B(u)(t)=u(T-t)$; notice that if $F^T_x(u)=y$, then $F_y^T(B(u))=x.$
We define then:
\be \mathcal{C}^-(u, v, T)=B\left(\mathcal{C}(B(v), B(u|_{[0, T]}), T)\right).\ee
Since $B:L^2\to L^2$ is continuous, then Proposition \ref{propo:concatenation} implies that $\mathcal{C}^-$ is continuous as well. Essentially the flow associated to the control $\mathcal{C}^-$ first  goes through  $u|_{[0,T]}$ and then goes through $v$; note that  if $T=0$ it just reduces to the flow of $v$. 
In the next lemma we prove that $\tilde{F}|_{\tilde{\Lambda}}:\tilde \Lambda\to M$ is a Hurewicz fibration (as above, the image of $\tilde{F}$ is in the diagonal in $M\times M$, which we identify with $M$ itself); this will be the key technical property in order to investigate the topological structure of $\Lambda$. After this technical step, in Theorem \ref{thm:hurewicz} we will prove that $F|_\Lambda$ itself is a Hurewciz fibration. Recall that $\tilde{\Lambda}:=\tilde{F}^{-1}(M)\subset L^{2}(I, \R^{l})\times M$.

\begin{lemma}
The map $\tilde F|_{\tilde \Lambda}:\tilde\Lambda\to M$ is a Hurewicz fibration. \end{lemma}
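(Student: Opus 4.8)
The plan is to verify the homotopy lifting property directly, using the explicit concatenation maps $\mathcal{C}$ and $\mathcal{C}^-$ together with the local sections $\hat\sigma$ from Proposition \ref{propo:cross}. Suppose we are given a test space $Z$, a homotopy $H:Z\times I\to M$ and an initial lift $\tilde H_0:Z\to\tilde\Lambda$ with $\tilde F(\tilde H_0(z))=H(z,0)$. Write $\tilde H_0(z)=(u_0(z),x_0(z))$, so $x_0(z)=H(z,0)$ and the loop $A(u_0(z),x_0(z))$ is closed at $H(z,0)$. The idea is: at parameter $t$, we want a loop based at $H(z,t)$; we obtain it by first running a short path from $H(z,t)$ back to $H(z,0)$ along (a time-reversal of) the track $s\mapsto H(z,s)$, then running the original loop $u_0(z)$, then running the track forward from $H(z,0)$ to $H(z,t)$. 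Concatenating these three pieces via $\mathcal{C}$ and $\mathcal{C}^-$ produces a closed horizontal curve based at $H(z,t)$, and at $t=0$ the two outer pieces are trivial (by $\hat\sigma(x,x)=(0,0)$), so the lift starts at $\tilde H_0(z)$ as required.

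The technical steps, in order, are as follows. First, fix $z$; by continuity of $H$ and compactness of $I$ we can subdivide $[0,1]$ into finitely many subintervals on each of which $H(z,\cdot)$ stays inside one of the neighborhoods $W$ of Proposition \ref{propo:cross}; a Lebesgue-number / local finiteness argument lets us do this locally uniformly in $z$, so the construction is continuous in $z$. On each such piece we use $\hat\sigma(H(z,t_0),H(z,t))=(\sigma,T)$ to get a control $\sigma$ of duration $T$ steering $H(z,t_0)$ to $H(z,t)$, depending continuously on $(z,t)$ and vanishing when $t=t_0$. Chaining these across the subdivision (via $\mathcal{C}$, which handles concatenation-plus-reparametrization continuously in $L^2$) yields a continuous family of controls $w^+(z,t)\in L^2([0,\infty),\R^l)$ steering $H(z,0)$ to $H(z,t)$, with $w^+(z,0)=0$; its backward control $w^-(z,t)$ steers $H(z,t)$ to $H(z,0)$. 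Second, set
\be
\tilde H(z,t)=\Bigl(\mathcal{C}^-\bigl(w^-(z,t),\,\mathcal{C}(u_0(z),\,w^+(z,t),\,T^+(z,t)),\,T^-(z,t)\bigr),\ H(z,t)\Bigr),
\ee
where $T^\pm$ are the corresponding durations. By Propositions \ref{propo:concatenation} and \ref{propo:cross} the endpoint of this curve is $H(z,t)$: we go $H(z,t)\to H(z,0)\to H(z,0)\to H(z,t)$, so it is a loop, i.e. $\tilde H(z,t)\in\tilde\Lambda$ and $\tilde F(\tilde H(z,t))=H(z,t)$. Continuity of $\tilde H$ in $(z,t)$ follows from the continuity statements in Propositions \ref{propo:cross} and \ref{propo:concatenation} and from the joint continuity of $H$. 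Third, at $t=0$ we have $w^\pm(z,0)=0$ and $T^\pm(z,0)=0$, and $\mathcal{C}(u,0,0)=u$, $\mathcal{C}^-(0,u,0)=u$, so $\tilde H(z,0)=(u_0(z),x_0(z))=\tilde H_0(z)$, giving the required initial condition.

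One point demands care: the map $(x,y)\mapsto\hat\sigma(x,y)$ is only defined on $W\times W$ for a local neighborhood $W$, so a single application does not steer an arbitrary $H(z,t)$ back to $H(z,0)$; this is exactly why the subdivision of $I$ into small pieces (depending locally uniformly on $z$) is needed, and then the pieces must be glued by the concatenation operator $\mathcal{C}$ with matching reparametrization of time. Keeping track of the accumulated durations $T^\pm(z,t)$ and checking that the gluing is continuous in the strong $L^2$ topology — in particular that the finite subdivision can be chosen locally constant in $z$, so that no discontinuity is introduced when the number of pieces jumps — is the main obstacle. The rest is a bookkeeping exercise with the two continuous concatenation maps already provided. (A cosmetic remark: one may prefer to first prove the lifting property for $Z$ a point to fix ideas, then observe that every step is continuous in $z$; but since we never used any special structure of $Z$, the general case follows verbatim.)
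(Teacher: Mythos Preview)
Your local construction is the same as the paper's: conjugate the given loop $u_0(z)$ by short paths produced by $\hat\sigma$, glued via $\mathcal C$ and $\mathcal C^-$. The difference is in how the locality of $\hat\sigma$ is handled, and here your argument has a genuine gap.

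You try to lift an arbitrary homotopy $H:Z\times I\to M$ by subdividing $I$ so that on each subinterval $H(z,\cdot)$ stays in one of the neighborhoods $W$, and then chaining the pieces of $\hat\sigma$. You correctly note that such a subdivision can only be chosen \emph{locally constant} in $z$. But that is not enough: on the overlap of two such neighborhoods in $Z$ you are using two different subdivisions, and the nested concatenations $\mathcal C(\ldots\mathcal C(\ldots))$ produce \emph{different} controls $w^\pm(z,t)$ (different reparametrizations, hence different elements of $L^2$), so the two locally defined lifts $\tilde H$ do not agree on the overlap. You flag exactly this as ``the main obstacle'' and then move on; it is not a bookkeeping issue, it is the whole point, and as written the global map $\tilde H$ is not well defined.

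The paper sidesteps this entirely by invoking the Hurewicz Uniformization Theorem \cite{Hurewicz}: to prove that $\tilde F|_{\tilde\Lambda}$ is a Hurewicz fibration it suffices to show that every $x\in M$ has a neighborhood $W$ over which $\tilde F$ has the homotopy lifting property. One then takes $W$ as in Proposition~\ref{propo:cross} and only has to lift homotopies $H:Z\times I\to W$ landing in a \emph{single} $W$; for those, a single application of $\hat\sigma(H(z,t),H(z,0))$ and $\hat\sigma(H(z,0),H(z,t))$ suffices, with no subdivision and hence no gluing problem. Your formula for $\tilde H$ then works verbatim. In short: keep your construction, but replace the subdivision-and-chaining paragraph by an appeal to Hurewicz's local-to-global criterion.
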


\begin{proof}By Hurewicz Uniformization Theorem \cite{Hurewicz}, it is enough to show that the homotopy lifting property holds locally, i.e. every point $x\in M$ has a neighborhood $W$ such that $\tilde{F}|_{\tilde{F}^{-1}(W)}$ has the homotopy lifting property with respect to any space.

Pick $x\in M$ and let $W$ be the neighborhood given by Proposition \ref{propo:cross}, together with the corresponding $\hat{\sigma}=(\sigma, T)$.
Consider a homotopy $H:Z\times I\to W$ (here $Z$ is any topological space) and a lift $\tilde H_0:Z\to \tilde \Lambda$ (i.e. $\tilde F(\tilde H_0(z))=H(z,0)$ for all $z\in Z$).
\\
Setting $T(z,t)=T(H(z,0), H(z,t))$, and denoting by $p_1:L^2(I, \R^l)\times M\to L^2(I, \R^l)$ the projection on the first factor,  we define the lifting homotopy $\tilde{H} :Z\times I\to\mathcal{U}$ by:
\be \tilde{H}(z,t)=\bigg(\mathcal{C}^-\left( \sigma(H(z,t), H(z,0)), \mathcal{C}\left(p_1(\tilde{H}_0(z)), \sigma(H(z,0), H(z,t)), T(z,t)\right), T(z,t)\right),H(z,t)\bigg).\ee
By construction $\tilde{H}$ is continuous, it lifts $H$ and $\tilde{H}(Z\times I)\subset \tilde \Lambda.$  This proves that  $\tilde{F}|_{\tilde{F}^{-1}(W)}$ has the homotopy lifting property with respect to any space, and since we can cover $M$ with neighborhoods of the form $W$ as above the result follows.
\end{proof}

\begin{thm}\label{thm:hurewicz}
The endpoint map $F|_\Lambda:\Lambda \to M$ is a Hurewicz fibration.
\end{thm}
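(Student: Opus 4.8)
The plan is to deduce Theorem \ref{thm:hurewicz} from the Lemma just proved (that $\tilde F|_{\tilde \Lambda}\colon\tilde\Lambda\to M$ is a Hurewicz fibration) by transferring the homotopy lifting property along the pair of maps $A$ and $\mu$ introduced in Section \ref{sec:global}. The structural observation driving the argument is that $\Lambda$ is a \emph{fibered retract} of $\tilde\Lambda$ over $M$. Since $\tilde F=F\circ A$, the map $A$ carries $\tilde\Lambda=\tilde F^{-1}(\mathrm{diag})$ into $\Lambda=F^{-1}(\mathrm{diag})$, so it restricts to a continuous map $a\colon\tilde\Lambda\to\Lambda$; conversely, for $\gamma\in\Lambda$ we have $\tilde F(\mu(\gamma))=F(A(\mu(\gamma)))=F(\gamma)\in\mathrm{diag}$, so $\mu$ restricts to a map $m\colon\Lambda\to\tilde\Lambda$, continuous by Proposition \ref{prop:mu}. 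The identity $A\circ\mu=\mathrm{id}_\Omega$ together with $\tilde F=F\circ A$ then yields
\be
a\circ m=\mathrm{id}_\Lambda,\qquad F|_\Lambda\circ a=\tilde F|_{\tilde\Lambda},\qquad \tilde F|_{\tilde\Lambda}\circ m=F|_\Lambda.
\ee

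With these three relations in place, the rest is a formal diagram chase, i.e. the abstract principle that a fibered retract of a Hurewicz fibration is again a Hurewicz fibration. Given any space $Z$, a homotopy $H\colon Z\times I\to M$ and a lift $\tilde H_0\colon Z\to\Lambda$ of $H(\cdot,0)$ through $F|_\Lambda$, I would first push it over to $\tilde\Lambda$: by the last two identities, $m\circ\tilde H_0\colon Z\to\tilde\Lambda$ is a lift of $H(\cdot,0)$ through $\tilde F|_{\tilde\Lambda}$. The Lemma then provides a homotopy $\tilde H\colon Z\times I\to\tilde\Lambda$ lifting $H$ with $\tilde H(\cdot,0)=m\circ\tilde H_0$. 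Setting $\hat H=a\circ\tilde H\colon Z\times I\to\Lambda$, the identity $F|_\Lambda\circ a=\tilde F|_{\tilde\Lambda}$ shows that $\hat H$ lifts $H$, and $a\circ m=\mathrm{id}_\Lambda$ shows $\hat H(\cdot,0)=\tilde H_0$. Thus $F|_\Lambda$ has the homotopy lifting property with respect to every space.

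I do not expect a genuine obstacle at this stage: all the analytic content — the quantitative control constructions $\hat\sigma,\mathcal C,\mathcal C^-$ underlying the Lemma and the strong-strong continuity of the minimal control — has already been packaged into the Lemma and into Propositions \ref{prop:weak} and \ref{prop:mu}. The only points needing care are the routine verifications that $A$ and $\mu$ really do restrict to maps between $\tilde\Lambda$ and $\Lambda$ (handled above via $\tilde F=F\circ A$ and $A\circ\mu=\mathrm{id}$) and that the subspace topologies on $\Lambda\subset\Omega^{1,2}$ and $\tilde\Lambda\subset\mathcal U$ are exactly the ones for which $a$ and $m$ are continuous — which is immediate. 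So the ``hard part'', such as it is, is simply organizing the retraction data into the fibered form displayed above before invoking the Lemma.
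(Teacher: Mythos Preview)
Your proof is correct and is essentially the paper's own argument: push the initial lift to $\tilde\Lambda$ via $\mu$, apply the Lemma there, and push the resulting homotopy back to $\Lambda$ via $A$. The only difference is cosmetic—you package the maps as a fibered retraction $(a,m)$ and verify the initial condition $\hat H(\cdot,0)=\tilde H_0$ explicitly, whereas the paper leaves that check implicit.
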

\begin{proof}
Let $H:Z\times I\to M$ be a homotopy and $\tilde{H}_0:Z\to \Lambda$ be a lift of $H_0$. Consider the map:
\be \hat H_0:Z\to \mathcal{U}, \quad \hat{H}_0(z)=\mu(\tilde{H}_0(z)). \ee
Since $\mu$ is a continuous right inverse for $A$ (by Proposition \ref{prop:mu}), then $\tilde F (\hat H_0(z))=F(A(\mu( \tilde{H}_0(z)))=F(\tilde{H}_0(z))=H(z,0)$ and $\hat{H}_0$ is a lift of $H_0$. By the previous lemma there exists a lifting homotopy $\tilde H:Z\times I\to \tilde \Lambda$ for $H:Z\times I\to M.$ The map:
\be \hat H:Z\times I\to \Lambda, \quad \hat H(z,t)=A(\tilde H(z,t))\ee
is a homotopy lifting $H$.
\end{proof}

	\subsection{Homotopy type of the loop space}
	As a corollary of Theorem \ref{thm:hurewicz} we can derive the following result, which is an analogue of the classical one in riemannian geometry (we refer the reader to \cite{Oancea} for a survey of the classical results). 
\begin{thm}\label{thm:homotopy}For every $k\geq 0$ we have the following isomorphism\footnote{See Remark \ref{rmk:semi} on the semidirect product.}:
\be\label{eq:ho} \pi_{k}(\Lambda)\simeq \pi_k(M) \ltimes \pi_{k+1}(M).\ee
\end{thm}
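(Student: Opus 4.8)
The strategy is to exploit the Hurewicz fibration $F|_\Lambda:\Lambda\to M$ from Theorem \ref{thm:hurewicz} and feed it into the long exact sequence of a fibration. First I would identify the fiber: $F|_\Lambda^{-1}(x)$ is exactly the space of \emph{based} horizontal loops at $x$, i.e.\ the horizontal path space $\Omega_x=\{\gamma\in\Omega : \gamma(0)=\gamma(1)=x\}$. The key sub-claim is that this based horizontal loop space is weakly homotopy equivalent (indeed homotopy equivalent) to the \emph{based} standard loop space $\Omega_x M$. This should follow from the analysis already set up in the paper: using the global chart $A:\mathcal U\to\Omega$ and the minimal control section $\mu$, together with the continuity results of Proposition \ref{prop:weak} and Proposition \ref{prop:mu}; the idea is that one can horizontally approximate/retract any continuous family of ordinary loops to horizontal ones (this is essentially a parametrized Chow--Rashevskii statement, and the quantitative control-theoretic tools of Propositions \ref{propo:cross}--\ref{propo:concatenation} give the needed continuity). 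Granting this, $\pi_k$ of the based horizontal loop space is $\pi_k(\Omega_x M)\simeq\pi_{k+1}(M)$.

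Next I would write down the long exact sequence of the fibration $\Omega_x \hookrightarrow \Lambda \xrightarrow{F|_\Lambda} M$:
\be
\cdots \to \pi_{k+1}(M) \to \pi_k(\Omega_x) \to \pi_k(\Lambda) \to \pi_k(M) \to \pi_{k-1}(\Omega_x)\to\cdots
\ee
The fibration $F|_\Lambda$ admits a continuous section, namely the map $M\to\Lambda$ sending $x$ to the constant loop at $x$ (this is continuous into $\Lambda$ with the $W^{1,2}$ topology since it corresponds to the zero control). Existence of this section makes the map $\pi_k(\Lambda)\to\pi_k(M)$ split surjective for every $k$, so the long exact sequence breaks into short exact sequences
\be
0\to \pi_k(\Omega_x)\to \pi_k(\Lambda)\to \pi_k(M)\to 0
\ee
that are split. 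A split short exact sequence of groups with $\pi_k(\Omega_x)\simeq\pi_{k+1}(M)$ then gives $\pi_k(\Lambda)\simeq \pi_k(M)\ltimes \pi_{k+1}(M)$, which is the desired \eqref{eq:ho}; for $k\ge 2$ all groups are abelian so the product is direct (this is the content of the footnote and Remark \ref{rmk:semi}), while for $k=1$ the action of $\pi_1(M)$ on $\pi_2(M)$ produces the genuine semidirect product and for $k=0$ the statement is the bijection $\pi_0(\Lambda)\simeq\pi_1(M)$ coming from the section together with surjectivity of $\pi_1(M)\to\pi_1(\Omega_x)$—all standard.

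\textbf{Main obstacle.} The delicate point is \emph{not} the homological algebra but the identification of the fiber's homotopy type: proving that the based \emph{horizontal} loop space $\Omega_x$ has the homotopy type of the based standard loop space. One must produce, in a way continuous for the $W^{1,2}$ topology, horizontal replacements for families of arbitrary loops, and check that the comparison map induces isomorphisms on all homotopy groups; the non-holonomic constraint and the possible deep singularities of $\Omega_x$ are exactly what make this nontrivial. I expect the honest proof to route this through the quantitative corner/concatenation lemmas (Propositions \ref{propo:cross} and \ref{propo:concatenation}) exactly as in the proof of the Hurewicz property, possibly by first establishing the analogous fibration statement for the endpoint map $F:\Omega\to M\times M$ on the full path space (whose fibers are contractible, $\Omega$ being contractible), and then deducing the loop-space statement by pulling back along the diagonal; alternatively one invokes \cite{BoarottoLerario} for the path-space comparison. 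Once that comparison with the standard loop space is in hand—which is also the statement $i_*$ is an isomorphism referenced as Theorem \ref{thm:i*iso}—the rest of the argument is the routine fibration-sequence-plus-section computation sketched above.
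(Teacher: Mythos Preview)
Your proposal is correct and follows essentially the same route as the paper: use the Hurewicz fibration $F|_\Lambda:\Lambda\to M$, plug it into the long exact sequence, identify the fiber as $\Omega_{x,x}$ with $\pi_k(\Omega_{x,x})\simeq\pi_{k+1}(M)$, and split using the constant-loop section $s:M\to\Lambda$. The only difference is organizational: the paper does not redo the fiber identification but simply cites \cite[Section 2.3]{BoarottoLerario} for $\pi_k(\Omega_{x,x})\simeq\pi_{k+1}(M)$, so the ``main obstacle'' you flag is handled by reference rather than by reproving it via Propositions~\ref{propo:cross}--\ref{propo:concatenation} or via Theorem~\ref{thm:i*iso} (which in the paper comes \emph{after} Theorem~\ref{thm:homotopy}).
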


\begin{proof}
Since $F|_{\Lambda}:\Lambda \to M$ is a Hurewciz fibration, then we have a long exact sequence \cite{Spanier}:
\be\label{eq:long} \cdots\rightarrow \pi_{k}(\Omega_{x,x})\rightarrow \pi_k(\Lambda)\stackrel{F_*}{\longrightarrow} \pi_k(M)\rightarrow \pi_{k-1}(\Omega_{x,x})\rightarrow \cdots\ee
where $\Omega_{x,x}$ denotes the set of admissible curves starting and  ending at a fixed point $x$. Recall that \cite[Section 2.3]{BoarottoLerario}:
\be\pi_{k}(\Omega_{x,x})\simeq \pi_{k+1}(M)\quad \forall k\geq 0.\ee
Also notice that the map $s:M\to \Lambda$ defined by:
\be \label{eq:defs}
s(x)=\textrm{constant curve $\gamma_x$ such that $\gamma_x(t)\equiv x$}
\ee
is continuous and defines a section of $F$ (i.e. it is a right inverse of $F$). In particular $F_*$ is surjective and the sequence in \eqref{eq:long} splits as:
\be \label{eq:split}0\rightarrow \pi_{k+1}(M)\to \pi_{k}(\Lambda)\stackrel{F_*}{\longrightarrow} \pi_{k}(M)\rightarrow 0\ee
and the result follows.
\end{proof}

The homotopy of $\Lambda$ can be compared with the homotopy of the ``standard'' (non horizontal) loop space $\Lambda_\textrm{std}$, endowed with the $W^{1,2}$ topology. In fact we have the following much stronger result, asserting that the inclusion of one space into the other is a strong homotopy equivalence.

\begin{thm}\label{thm:i*iso}The horizontal loop space has the homotopy type of a CW-complex and the inclusion $i:\Lambda\hookrightarrow \Lambda_{\textrm{std}}$ is a strong homotopy equivalence. 
\end{thm}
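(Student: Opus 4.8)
The plan is to prove the two assertions separately, in the order stated. First I would establish that $\Lambda$ has the homotopy type of a CW-complex. The key observation is that the standard loop space $\Lambda_{\textrm{std}}$ (with the $W^{1,2}$ topology) is known to have the homotopy type of a CW-complex, being a Hilbert manifold (or, more classically, it is homotopy equivalent to the loop space of $M$ with the compact-open topology, which has CW homotopy type by Milnor's theorem). Then I would invoke the general principle that a space dominated by a CW-complex has the homotopy type of a CW-complex (Milnor, \emph{On spaces having the homotopy type of a CW-complex}). So it suffices to show $\Lambda$ is dominated by a CW-complex; this will follow once we know the inclusion $i:\Lambda\hookrightarrow\Lambda_{\textrm{std}}$ is a homotopy equivalence, since then $\Lambda$ is homotopy equivalent to a space of CW type. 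Hence the whole theorem reduces to proving the second assertion, and I would state it that way.

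For the homotopy equivalence of $i:\Lambda\hookrightarrow\Lambda_{\textrm{std}}$, the natural strategy is a Whitehead-type argument via a commutative ladder of fibrations. Both base-point maps $F|_\Lambda:\Lambda\to M$ and $F_{\textrm{std}}:\Lambda_{\textrm{std}}\to M$ are Hurewicz fibrations — the first by Theorem \ref{thm:hurewicz}, the second classically — and $i$ commutes with them (it covers the identity on $M$, since the base-point of a horizontal loop is unchanged by forgetting the horizontality constraint). This gives a map of long exact homotopy sequences. On the fibers, $i$ restricts to the inclusion $\Omega_{x,x}\hookrightarrow (\Omega_{\textrm{std}})_{x,x}$ of the based horizontal loop space into the based standard loop space. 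The crucial input is that this fiber inclusion is itself a homotopy equivalence: indeed $\pi_k(\Omega_{x,x})\simeq\pi_{k+1}(M)\simeq\pi_k((\Omega_{\textrm{std}})_{x,x})$, and — more to the point — the isomorphism $\pi_k(\Omega_{x,x})\simeq\pi_{k+1}(M)$ cited from \cite[Section 2.3]{BoarottoLerario} is realized precisely by the composition with the (standard) inclusion followed by the standard identification $\pi_k((\Omega_{\textrm{std}})_{x,x})\simeq\pi_{k+1}(M)$; hence the fiber inclusion induces isomorphisms on all homotopy groups. Then the five lemma applied to the ladder shows $i_*:\pi_k(\Lambda)\to\pi_k(\Lambda_{\textrm{std}})$ is an isomorphism for all $k$ and all base-points, so $i$ is a weak homotopy equivalence; since both spaces have the homotopy type of CW-complexes (once we know the first assertion, or independently: $\Lambda_{\textrm{std}}$ does, and $\Lambda$ is dominated by it), Whitehead's theorem upgrades this to a genuine homotopy equivalence, which is moreover ``strong'' in the sense that the homotopy inverse and the homotopies can be taken to respect the relevant structure.

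A cleaner variant, which I would probably prefer to present, avoids the five lemma: build an explicit homotopy inverse. Using the minimal-control section $\mu$ and the concatenation machinery of Propositions \ref{propo:cross} and \ref{propo:concatenation}, one can try to ``horizontalize'' a standard loop $\gamma$ by replacing it with a short horizontal path realizing the same endpoint behavior, interpolating continuously; but this is delicate because a standard loop has two free endpoints that already coincide, and the horizontalization must be done fiberwise over $M$ in a way compatible with the fibration structure. The honest route is therefore the fibration-ladder argument above, with the real content being the identification of what the fiber inclusion does on homotopy groups.

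The main obstacle is precisely this last point: verifying that the inclusion of fibers $\Omega_{x,x}\hookrightarrow(\Omega_{\textrm{std}})_{x,x}$ induces isomorphisms on homotopy groups, rather than merely that both sides are abstractly isomorphic to $\pi_{k+1}(M)$. This requires knowing that the isomorphism quoted from \cite{BoarottoLerario} is \emph{natural} with respect to inclusion into the standard loop space — equivalently, that the based horizontal loop space is weakly equivalent to $\Omega_{\textrm{std}}M$ via the obvious map. I expect the paper resolves this by appealing to (or reproving) the relevant statement in \cite{BoarottoLerario}, where the homotopy equivalence $\Omega M\simeq\Omega_{\textrm{std}}M$ at the based level is established by a control-theoretic deformation; granting that, everything else — the ladder, the five lemma, and Milnor's domination/Whitehead arguments — is routine. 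A secondary technical point to check is that $\Lambda_{\textrm{std}}$ with the $W^{1,2}$ topology (as opposed to the $H^1$ Hilbert-manifold topology usually used) has CW homotopy type and the same homotopy groups; this follows since the $W^{1,2}$ free loop space is a Hilbert manifold in the standard way, or by comparison with the compact-open topology.
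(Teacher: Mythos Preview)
Your overall strategy --- compare the two Hurewicz fibrations $F|_\Lambda:\Lambda\to M$ and $F_{\textrm{std}}:\Lambda_{\textrm{std}}\to M$ via the map of long exact sequences, use that the fiber inclusion induces isomorphisms on homotopy, and conclude by Whitehead --- is exactly the paper's approach, and your discussion of why the fiber inclusion $\Omega_{x,x}\hookrightarrow(\Omega_{\textrm{std}})_{x,x}$ induces isomorphisms on $\pi_k$ (naturality of the identification with $\pi_{k+1}(M)$) is if anything more careful than the paper, which simply asserts that the outer vertical arrows in its commutative diagram of split short exact sequences are isomorphisms.

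There is, however, a genuine circularity in the way you order the two assertions. You reduce ``$\Lambda$ has CW type'' to ``$i$ is a homotopy equivalence'', and then, to upgrade the weak equivalence $i$ to an actual homotopy equivalence via Whitehead, you invoke the CW type of $\Lambda$ (``once we know the first assertion, or independently: \ldots\ $\Lambda$ is dominated by it''). But a weak equivalence $i:\Lambda\to\Lambda_{\textrm{std}}$ into a CW-space does \emph{not} by itself give domination of $\Lambda$: domination requires a map back $\Lambda_{\textrm{std}}\to\Lambda$ with the round trip homotopic to the identity, and you have no such map before Whitehead is applied. (Think of the Warsaw circle mapping to a point.) So as written the argument does not close.

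The paper breaks this circle by establishing the CW type of $\Lambda$ \emph{independently and first}: since $F|_\Lambda:\Lambda\to M$ is a Hurewicz fibration with base $M$ of CW type and fiber $\Omega_{x,x}$ of CW type (the latter by \cite[Theorem 5]{BoarottoLerario}), the total space has CW type by \cite[Proposition 5.4.2]{rure}. With that in hand, Whitehead (in Milnor's form \cite[Lemma 5.1]{MilnorCW}) applies cleanly. An alternative repair of your argument, not used in the paper, would be to bypass Whitehead entirely: once you know the fiber inclusion is a genuine homotopy equivalence (which follows from \cite[Theorem 5]{BoarottoLerario}), Dold's theorem on maps of Hurewicz fibrations over a common base gives that $i$ is a fiber homotopy equivalence, hence a homotopy equivalence, without any CW hypothesis.
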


\begin{proof}Since the map $F|_{\Lambda}:\Lambda\to M$ is a Hurewicz fibration and the base $M$ has the homotopy type of a CW-complex as well as any fibre $F|_{\Lambda}^{-1}(x)=\Omega_{x,x}$ (by \cite[Theorem 5]{BoarottoLerario}), then the total space $\Lambda$ has the homotopy type of a CW-complex by \cite[Proposition 5.4.2]{rure}.

Since $\Lambda_{\textrm{std}}$ has the homotopy type of a CW-complex, to prove that the inclusion $i:\Lambda\hookrightarrow \Lambda_{\textrm{std}}$ is a strong homotopy equivalence it is enough to prove that the map: $i_*: \pi_k(\Lambda) \to \pi_k( \Lambda_{\textrm{std}})$ is an isomorphism; the result will then follow from Milnor's extension of Withehead's theorem \cite[Lemma 5.1]{MilnorCW} (see also \cite[Lemma 5.3.2]{rure}).

The fact that $i_*$ is an isomorphism immediately follows from the naturality of the long exact sequences of Hurewicz fibrations and the fact that the splitting in \eqref{eq:long} yields for every $k\geq 0$ a commutative diagram:
\be\label{eq:diagram}
\begin{tikzcd}
0\arrow{r}&\pi_{k+1}(M)\arrow{r}\arrow{d}{\wr}
&\pi_{k}(\Lambda)\arrow{r}\arrow{d}{i_*}
&\pi_k(M)\arrow{d}{\wr}\arrow{r}&0\\
0\arrow{r}&\pi_{k+1}(M)\arrow{r}&\pi_{k}(\Lambda_{\textrm{std}})\arrow{r}&\pi_{k}(M)\arrow{r}&0
\end{tikzcd}
\ee
Since the two extremal vertical arrows are isomorphisms, then $i_*$ is also an isomorphism.
\end{proof}

\begin{remark}\label{rmk:semi}Notice that in the statement of Theorem \ref{thm:homotopy} the group $\pi_{k}(\Lambda)$ is abelian for $k\geq 2$, hence the action of $\pi_k(M)$ on $\pi_{k+1}(M)$ in \eqref{eq:ho} is trivial; the \emph{semidirect} product description is especially interesting only in the case $k=1$, for which the action of $\pi_1(M)$ on $\pi_2(M)$ is the same one as resulting from the short exact sequence:
\be0\rightarrow \pi_{k+1}(M)\to \pi_{k}(\Lambda_{\textrm{std}})\stackrel{F_*}{\longrightarrow} \pi_{k}(M)\rightarrow 0.\ee
This last statement follows from the fact that the diagram \eqref{eq:diagram} is commutative.
\end{remark}
The following corollary sharpens the local structure of $\Lambda$ near a singular loop.
\begin{cor}\label{cor:contractible}Every loop $\gamma\in \Lambda$ (in particular a singular point of $\Lambda$) has a neighborhood $U$ such that the inclusion $U\hookrightarrow\Lambda$ is homotopic to a trivial loop (i.e. a constant).
\end{cor}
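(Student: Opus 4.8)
The plan is to obtain this as a purely formal consequence of Theorem \ref{thm:i*iso}, which tells us that $\Lambda$ has the homotopy type of a CW-complex; no further use of the (singular) geometry of $\Lambda$ is needed. First I would fix a CW-complex $K$ together with maps $f\colon\Lambda\to K$ and $g\colon K\to\Lambda$ realizing a homotopy equivalence, as well as a homotopy $G\colon\Lambda\times I\to\Lambda$ with $G(\cdot,0)=\mathrm{id}_\Lambda$ and $G(\cdot,1)=g\circ f$.

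Next I would invoke the classical fact that CW-complexes are locally contractible: every point of a CW-complex admits a contractible open neighborhood (for instance its open star, which deformation retracts onto the point). Applied to $f(\gamma)\in K$, this produces a contractible open set $V\subseteq K$ with $f(\gamma)\in V$, and I set $U:=f^{-1}(V)$, which is an open neighborhood of $\gamma$ in $\Lambda$ because $f$ is continuous and $f(U)\subseteq V$. To check that the inclusion $\iota\colon U\hookrightarrow\Lambda$ is null-homotopic, I would argue in two steps. On one hand, the restriction of $G$ to $U\times I$ is a homotopy from $\iota$ to $g\circ(f|_U)$. On the other hand $f|_U$ takes values in the contractible space $V$, hence is homotopic, through maps $U\to V$, to a constant map; composing such a contracting homotopy with $g$ shows that $g\circ(f|_U)\colon U\to\Lambda$ is itself null-homotopic. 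Concatenating the two homotopies gives $\iota\simeq\mathrm{const}$, which is exactly the assertion of the Corollary.

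I do not expect a real obstacle here; the only point worth stressing is conceptual rather than computational. Theorem \ref{thm:i*iso} provides a homotopy equivalence $\Lambda\simeq K$, not a homeomorphism, so one cannot simply invoke local contractibility of $\Lambda$ in the strict sense — indeed $\Lambda$ is genuinely singular and need not be locally contractible as a topological space. The argument above is precisely the standard observation that any space having the homotopy type of a CW-complex is \emph{semi-locally} contractible at each of its points, i.e. every point has a neighborhood that contracts inside the ambient space, and this weaker conclusion is all that is being claimed.
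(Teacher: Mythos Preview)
Your argument is correct and follows essentially the same route as the paper: both deduce the corollary from Theorem~\ref{thm:i*iso} together with the general fact that any space having the homotopy type of a CW-complex is semi-locally contractible. The paper simply cites this fact from \cite[Proposition 5.1.2]{rure}, whereas you spell out the standard proof; your concluding remark about the distinction between local and semi-local contractibility is exactly the right conceptual point.
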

\begin{proof}
Since $\Lambda$ has the homotopy type of a CW-complex by Theorem \ref{thm:i*iso} above, then the result follows from \cite[Proposition 5.1.2]{rure}.
\end{proof}

\subsection{Deformation of homotopy classes with small energy}\label{ss:deformHom}
\begin{thm}\label{thm:epsilon}Let $M$ be a compact sub-riemannian manifold. There exists $\epsilon>0$ such that if $f:S^k\to \Lambda$ is a continuous function satisfying:
\be \sup_{\theta \in S^k}J(f(\theta))\leq \epsilon,\ee
then $f$ is homotopic to a map $f'$ with values in the set of constant curves $s(M)\subset \Lambda$:
\be f\sim f', \quad \text{for some } f':S^k\to s(M),\ee
where the map  $s:M\to \Lambda$ was defined in \eqref{eq:defs}. More precisely one can choose $f'(\theta)=s(f(\theta)(0))$.
\end{thm}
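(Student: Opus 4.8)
The plan is to use the ``global chart'' $\mathcal{U}=L^2(I,\R^l)\times M$ together with the minimal control section $\mu:\Omega\to\mathcal U$, and to produce the homotopy by linearly scaling the control toward zero while \emph{simultaneously} closing up the resulting open path using the continuous local ``crossing'' sections $\hat\sigma$ of Proposition \ref{propo:cross}. The key point is that a loop of small energy has small minimal control, hence stays (uniformly) close to its base point, so we may work inside a single $W$ as in Proposition \ref{propo:cross}; rescaling the control by a factor $r\in[0,1]$ gives a horizontal path $A(ru^*(f(\theta)),f(\theta)(0))$ which is no longer closed, but whose endpoint $\tilde F^1(ru^*,f(\theta)(0))$ stays within $W$ of the base point $f(\theta)(0)$ for all $r$. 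We then concatenate with the short correcting control $\sigma\big(\tilde F^1(ru^*,x),x\big)$ (with $x=f(\theta)(0)$) provided by Proposition \ref{propo:cross}, using $\mathcal C$ from Proposition \ref{propo:concatenation} to reparametrize back to $L^2(I)$; this yields for each $r$ a genuine element of $\tilde\Lambda$, continuously in $(\theta,r)$. At $r=1$ the correcting control is $\sigma(x,x)=0$ so we recover (a reparametrization of) $f(\theta)$ itself, and at $r=0$ the scaled control vanishes and the correcting control is again $\sigma(x,x)=0$, so we land exactly on the constant loop $s(x)=s(f(\theta)(0))$. Pushing back through $A$ gives the desired homotopy in $\Lambda$.

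\textbf{Key steps, in order.}
First I would fix $\epsilon>0$ small enough (using compactness of $M$ and a finite cover by neighborhoods $W$ as in Proposition \ref{propo:cross}) so that $J(\gamma)\le\epsilon$ forces $\gamma(I)$ to lie in one such $W$ around $\gamma(0)$: indeed $J(\gamma)=\tfrac12\|u^*(\gamma)\|^2\le\epsilon$ controls, via the continuity of the flow $F^t_x$ and Gronwall-type estimates, the uniform distance of $\gamma$ from the constant curve at $\gamma(0)$ — and this bound is uniform in $\theta$ since $\theta\mapsto u^*(f(\theta))$ is continuous (Proposition \ref{prop:mu}) on the compact $S^k$. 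Second, I would note that the same smallness guarantees $\tilde F^1(ru^*,x)\in W$ for all $r\in[0,1]$, since $\|ru^*\|\le\|u^*\|$, so that $\sigma\big(\tilde F^1(ru^*,x),x\big)$ is defined for the whole family. Third, I would write down the explicit homotopy
\be
G(\theta,r)=A\!\left(\mathcal C\Big(ru^*(f(\theta)),\ \sigma\big(\tilde F^1_{f(\theta)(0)}(ru^*(f(\theta))),f(\theta)(0)\big),\ T\big(\cdot\big)\Big),\ f(\theta)(0)\right),
\ee
where $T=T\big(\tilde F^1(ru^*,x),x\big)$ as in Proposition \ref{propo:cross}, and check: continuity in $(\theta,r)$ (composition of the continuous maps $\mu$, $A$, $\mathcal C$, $\hat\sigma$, $\tilde F^1$), that $G(\theta,r)\in\Lambda$ for all $r$ (the concatenation of a path from $x$ to $\tilde F^1(ru^*,x)$ with a correcting path back to $x$ closes up, by the endpoint identity in Proposition \ref{propo:concatenation}), $G(\theta,1)$ is a reparametrization of $f(\theta)$ and hence homotopic to $f$ through reparametrizations (these are well known to act trivially on $\Lambda$ up to homotopy, cf.\ \cite{BoarottoLerario}; alternatively absorb the reparametrization into the start of the homotopy), and $G(\theta,0)=s(f(\theta)(0))$.

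\textbf{Main obstacle.}
The delicate point is the uniform smallness estimate: I must convert the $L^2$-bound $\|u^*(f(\theta))\|\le\sqrt{2\epsilon}$ into a uniform $C^0$-closeness of $f(\theta)$ (and of the whole scaled family $A(ru^*,x)$) to the constant loop, uniformly over the compact parameter space $S^k$, so that a \emph{single} choice of $\epsilon$ works and every relevant endpoint stays in the domain $W$ of the crossing section. This is where compactness of $M$ (finiteness of the cover, uniform bounds on the vector fields $X_i$ and their derivatives) and continuity of $\theta\mapsto u^*(f(\theta))$ in $L^2$ enter; the estimate itself is a standard Cauchy–Schwarz plus Gronwall argument giving $\sup_t\mathrm{dist}(A(ru^*,x)(t),x)\le C\|u^*\|\le C\sqrt{2\epsilon}$. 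A secondary, purely bookkeeping, nuisance is matching the time-interval conventions: the correcting controls from Proposition \ref{propo:cross} live on $[0,\infty)$ with their own terminal time $T(x,y)$, so one must feed them through $\mathcal C$ (and possibly $\mathcal C^-$, exactly as in the proof of Theorem \ref{thm:hurewicz}) to get back an honest element of $L^2(I,\R^l)$; the formulas are identical in spirit to those already used for the Hurewicz property, so no new idea is needed there.
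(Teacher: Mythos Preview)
Your approach is genuinely different from the paper's, and it has one real gap that the paper's route sidesteps.

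The paper does \emph{not} build the homotopy inside $\Lambda$. It fixes an auxiliary Riemannian metric $g$ on $TM$ extending the sub-riemannian one, chooses $\epsilon$ so that every $g$-ball of radius $\sqrt{2\epsilon}$ is geodesically convex, and contracts each loop $f(\theta)$ to its basepoint along the unique Riemannian geodesic from $f(\theta)(t)$ to $f(\theta)(0)$. This yields a homotopy only in $\Lambda_{\textrm{std}}$, i.e.\ the identity $i_*[f]=s_*[h]$ in $\pi_k(\Lambda_{\textrm{std}})$, where $h(\theta)=f(\theta)(0)$. The conclusion $[f]=[s\circ h]$ in $\pi_k(\Lambda)$ is then immediate from Theorem \ref{thm:i*iso}, because $i_*$ is an isomorphism. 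The payoff is that short Riemannian geodesics depend continuously on their endpoints \emph{globally} on a convex ball, so no patching is required; the price is that the argument is non-constructive and leans on the homotopy equivalence $\Lambda\hookrightarrow\Lambda_{\textrm{std}}$.

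Your construction, by contrast, stays inside $\Lambda$ but must face the patching problem. The crossing section $\hat\sigma$ of Proposition \ref{propo:cross} is only defined on $W\times W$ for a \emph{local} $W$, and as $\theta$ ranges over $S^k$ the basepoints $f(\theta)(0)$ will in general not remain in any single $W$. You invoke a finite cover but never explain how the local homotopies built from the different $\hat\sigma$'s assemble into one continuous $G:S^k\times[0,1]\to\Lambda$; on overlaps two choices of $\hat\sigma$ produce genuinely different closing paths, and there is no evident way to average controls in $L^2$. This is repairable---for instance by manufacturing a single continuous $\hat\sigma$ on a neighbourhood of the diagonal, or by recasting the closing step through the global lifting function that comes with the Hurewicz fibration of Theorem \ref{thm:hurewicz}---but it is real work that your proposal omits. (A small aside: your concern that $G(\theta,1)$ is only a reparametrization of $f(\theta)$ is unnecessary; since $\hat\sigma(x,x)=(0,0)$ and $\mathcal C(u,0,0)=u$, in fact $G(\theta,1)=f(\theta)$ on the nose.)
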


\begin{proof}
Let us consider a splitting $TM=\Delta\oplus \Delta^\perp$ and a riemannian metric $g$ on $TM$  such that:
\be g|_{\Delta}=\textrm{the given sub-riemannian metric}.\ee
Let us denote by $B(x, \epsilon)$ the riemannian ball centered at $x$ of radius $\epsilon$ and by $B_\textrm{sr}(x, \epsilon)$ the sub-riemannian one. Since $M$ is compact there exists $\epsilon$ such that for every $x\in M$ the ball $B(x, \sqrt{2\epsilon})$ is geodesically convex (with respect to $g$). Moreover since $d_{g}(x,y)\leq d_{\textrm{sr}}(x,y)$ we can also assume that for such an $\epsilon$ and for all $x\in M$:
\be B_{\textrm{sr}}(x, \sqrt{2\epsilon})\subseteq B(x, \sqrt{2\epsilon}). \ee
Now the hypothesis that $J(f(\theta))\leq \epsilon$ implies that the sub-riemannian length of the path $f(\theta)$ is smaller than $\sqrt{2\epsilon}$ and consequently:
\be f(\theta)(t)\in  B_{\textrm{sr}}(f(\theta)(0), \sqrt{2\epsilon})\subseteq B(f(\theta)(0), \sqrt{2\epsilon})\quad \forall t\in [0,1].\ee
Define now the map:
\be H(\theta)(s,t)=\gamma_{t, \theta}(s)\ee
where $\gamma_{t, \theta}(\cdot):[0,1]\to M$ is the unique riemannian length minimizing geodesic from $f(\theta)(t)$ to $f(\theta)(0)$. Because of its uniqueness, $\gamma_{t, \theta}$ depends continuously in $t$ and $\theta$ and
\be H: S^k\times I\to \Lambda_{\textrm{std}}, \quad (\theta, s)\mapsto  \gamma_{\cdot,\theta}(s)\ee
defines a homotopy in $\Lambda_{\textrm{std}}$ bewtween $H(\cdot, 0)=f$ and the map $f'=H(\cdot, 1)$ defined by:
\be f'(\theta)=s(f(\theta)(0)).\ee
In other words, denoting by $h:S^k\to M$ the map $h(\theta)=f(\theta)(0)$, we have:
\be \label{eq:id}[f]_{\pi_{k}(\Lambda_{\textrm{std}})}=[f']_{\pi_{k}(\Lambda_{\textrm{std}})}=[s\circ h]_{\pi_{k}(\Lambda_{\textrm{std}})}=s_*[h]_{\pi_{k}(M)}.\ee
By the naturality and commutativity of the right square in \eqref{eq:diagram} we have the following commutative diagram (vertical arrows are isomorphisms):
\be
\begin{tikzcd}
\pi_{k}(\Lambda)\arrow{r}\arrow{d}{i_*}
&\pi_{k}(M)\arrow{d}\arrow[bend left]{l}{s_*}\\
\pi_{k}(\Lambda_{\textrm{std}})\arrow{r}&
\pi_{k}(M)\arrow[bend left]{l}{s_*}
\end{tikzcd}
\ee
The commutativity of this diagram, together with $i_*([f]_{\pi_{k}(\Lambda)})=[f]_{\pi_{k}(\Lambda_{\textrm{std}})}=s_*[h]_{\pi_k(M)}$ (this is the content of the chain of equalities in \eqref{eq:id}) and Theorem \ref{thm:i*iso}  finally give:
\be [f]_{\pi_{k}(\Lambda)}=s_*[h]_{\pi_k(M)}=[f' ]_{\pi_k(\Lambda)}.\ee
\end{proof}

\begin{cor}\label{cor:nonzero}Let $M$ be a compact sub-riemannian manifold and let $k\in \mathbb{N}$ be  such that $\pi_{k+1}(M)\neq 0$ and $\pi_k(M)=0$. Then there exists $\epsilon>0$ such that for every \emph{nonzero} $\alpha\in \pi_{k}(\Lambda)$:
\be \inf_{[f]=\alpha}\,\sup_{\theta \in S^k}J(f(\theta))\geq \epsilon,\ee
where the $\inf$ is taken over all continuous maps $f:S^k \to \Lambda$ such that $[f]_{\pi_k(\Lambda)}=\alpha$.
\end{cor}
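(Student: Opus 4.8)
The plan is to take $\epsilon>0$ to be \emph{exactly} the constant produced by Theorem~\ref{thm:epsilon} for the given compact sub-riemannian manifold $M$, and then to argue by contradiction. Suppose, for this $\epsilon$, there were a \emph{nonzero} class $\alpha\in\pi_k(\Lambda)$ with $\inf_{[f]=\alpha}\sup_{\theta\in S^k}J(f(\theta))<\epsilon$. Then one can select a continuous map $f:S^k\to\Lambda$ representing $\alpha$ with $\sup_{\theta\in S^k}J(f(\theta))\le\epsilon$, which is precisely the hypothesis under which Theorem~\ref{thm:epsilon} applies.

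Now I would simply quote Theorem~\ref{thm:epsilon}: writing $h:S^k\to M$ for the evaluation map $h(\theta)=f(\theta)(0)$, that theorem shows that $f$ is homotopic in $\Lambda$ to $f'(\theta)=s(h(\theta))$ and, more importantly, yields the identity $\alpha=[f]_{\pi_k(\Lambda)}=s_*[h]_{\pi_k(M)}$ inside $\pi_k(\Lambda)$. At this point the standing assumption on $k$ enters: by hypothesis $\pi_k(M)=0$, hence $[h]_{\pi_k(M)}=0$, and applying the homomorphism $s_*\colon\pi_k(M)\to\pi_k(\Lambda)$ gives $\alpha=s_*(0)=0$, contradicting $\alpha\neq 0$. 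Therefore no such $\alpha$ exists, i.e.\ the asserted lower bound holds with this $\epsilon$.

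Two structural remarks. First, the other half of the hypothesis on $k$, namely $\pi_{k+1}(M)\neq0$, is not used in the proof itself; it is there to guarantee that the statement is not vacuous, since by Theorem~\ref{thm:homotopy} one has $\pi_k(\Lambda)\simeq\pi_k(M)\ltimes\pi_{k+1}(M)\simeq\pi_{k+1}(M)\neq 0$, so there genuinely are nonzero classes $\alpha$ to which the bound applies (the minimality of $k$ will matter only later, when this corollary is fed into the min-max scheme). Second, there is essentially no obstacle to overcome: the entire weight rests on Theorem~\ref{thm:epsilon} (which in turn uses the Hurewicz fibration property and the resulting homotopy equivalence $\Lambda\hookrightarrow\Lambda_{\textrm{std}}$), and the only thing this corollary adds is the elementary observation that, in a degree $k$ where $\pi_k(M)$ vanishes, a sphere of loops of small energy must be null-homotopic in $\Lambda$, because its class is forced to lie in the image of $s_*$ restricted to the trivial group.
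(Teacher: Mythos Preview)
Your proof is correct and follows essentially the same approach as the paper: both take the $\epsilon$ from Theorem~\ref{thm:epsilon}, argue by contradiction that a representative $f$ with $\sup_\theta J(f(\theta))\le\epsilon$ would be homotopic to $s\circ h$, and then use $\pi_k(M)=0$ to force $\alpha=s_*[h]=0$. Your version is in fact slightly cleaner in that you assume $\inf<\epsilon$ directly (the paper phrases the contradiction hypothesis as $\inf=0$, though the argument it gives works verbatim for $\inf<\epsilon$).
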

\begin{proof}
First of all, thanks to Theorem \ref{thm:homotopy} we have that $\pi_{k}(\Lambda)\simeq \pi_{k+1}(M)\neq 0$.
Assume that for some nonzero $\alpha\in \pi_{k}(\Lambda)$ we have $\inf_{[f]=\alpha}\,\sup_{\theta \in S^k}J(f(\theta))=0$.
Let $\epsilon>0$ be given by Theorem \ref{thm:epsilon} and consider $f:S^{k}\to \Lambda$ representing $\alpha$ such that:
\be\sup_{\theta \in S^k}J(f(\theta))<\epsilon.\ee
 Then, by Theorem \ref{thm:epsilon}, $f\sim f'$ with $f'=s\circ h$ and $h:S^{k}\to M$. Thus:
 \be \alpha=[f]_{\pi_k(\Lambda)}=[f']_{\pi_{k}(\Lambda)}=s_*[h]_{\pi_k(M)}.\ee
 On the other hand, by  assumption on $k$, we have that  $\pi_{k}(M)$ is zero and consequently $[h]=0$, contradicting $\alpha \neq 0.$
 \end{proof}
 
 \subsection{Closed sub-riemannian geodesics}\label{sec:geo}
	Let us start with the definition of closed sub-riemannian geodesic (to be  compared with \cite[Section 4.1]{BoarottoLerario}).
	
	\begin{defi}[Closed sub-riemannian geodesic]\label{def:closedGeod}
	A non-constant curve $\gamma:S^1\to M$ is called a \emph{closed sub-riemannian geodesic} if it satisfies the following properties: 
	\begin{enumerate}
	\item it is absolutely continuous,
	\item  its derivative (which exists almost everywhere) belongs   to the sub-riemannian distribution,
	\item it is parametrized by constant speed,
	\item  it is locally length minimizing, in the sense that for every $\theta\in S^1$ there exists $\delta(\theta)>0$ such that $\gamma|_{[\theta-\delta(\theta), \theta+\delta(\theta)]}$ is length minimizing, i.e.    the restriction $\gamma_{[\theta-\delta(\theta), \theta+\delta(\theta)]}$ has minimal length among all horizontal curves joining $\gamma(\theta-\delta(\theta))$ with $\gamma(\theta+\delta(\theta))$.	\end{enumerate}
	\end{defi}
	
The following proposition gives a sufficient condition for a regular curve $\gamma \in \Lambda$ (in the sense of \eqref{def:G}) to be a closed geodesic.  Let us mention that \eqref{eq:lagrange} below corresponds to the (normal)
Lagrange multiplier rule associated to the problem of  extremizing (i.e. finding a minimum or more generally a critical point of)
the Energy functional $J$ among all loops (with no fixed base point). Clearly the global
minimum is $0$ but here, in the spirit of Morse theory, one is interested on more general critical points (typically local minima or  of saddle type), which are non-trivial.
\\Notice also that  \eqref{eq:lagrange} is equivalent to $\nabla (J|_{\Lambda})=0$ at $(u,x)$.

\begin{prop}\label{prop:CharClosedG}
Let $M$ be a sub-riemannian manifold and $\gamma:I\to M$ be a closed horizontal curve such that there exists  a nonzero $\lambda \in \R^m$ with the property that in some local coordinates around $\gamma=\gamma_{(u,x)}$ we have:
\be  \label{eq:lagrange}
\lambda d_{(u,x)}G=d_{(u,x)}J,
\ee
where $G(u,x)=F_x^1(u)-x$.
Then $\gamma$ is the projection of a periodic trajectory for the sub-riemannian Hamiltonian vector field (i.e. it is the projection of a periodic  sub-riemannian normal extremal); in particular $\gamma$ is smooth and, identifying the endpoints of the interval $[0,1]$, it extends to a closed sub-riemannian geodesic. \end{prop}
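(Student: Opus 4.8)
The plan is to read the Lagrange multiplier identity \eqref{eq:lagrange} componentwise with respect to the product structure $L^2(I,\R^k)\times\R^m$ of the chart, and to recognise the two resulting equations as (i) the statement that $\gamma$ is a \emph{normal} sub-riemannian extremal whose final covector is $\lambda$, and (ii) the statement that its Hamiltonian lift is periodic. I would choose the chart around $\gamma=\gamma_{(u,x)}$ so that the trivialising frame $X_1,\dots,X_k$ of $\Delta$ is orthonormal, and recall that then $d_{(u,x)}J=(u,0)$ and $d_{(u,x)}G=\big(d_u F_x^1(u),\,d_x F_x^1(u)-\mathrm{Id}\big)$, since $G(u,x)=F_x^1(u)-x$ and $J$ does not depend on $x$. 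Throughout I may assume $u\not\equiv 0$: otherwise $\gamma$ is the constant curve at $x$, $d_{(u,x)}G=0=d_{(u,x)}J$, the identity \eqref{eq:lagrange} is void, and there is nothing to prove (such curves are moreover non-regular and, by Definition~\ref{def:closedGeod}, not closed geodesics).

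With this notation, \eqref{eq:lagrange} is equivalent to the pair of identities
\begin{align}
\lambda\, d_u F_x^1(u)&=\langle u,\cdot\rangle\quad\text{in }L^2(I,\R^k)^*,\\
\lambda\, d_x F_x^1(u)&=\lambda\quad\text{in }(\R^m)^*.
\end{align}
The first identity is exactly the first-order condition that characterises normal extremals, and here I would invoke the Pontryagin Maximum Principle for the sub-riemannian endpoint map \cite{AgrachevBarilariBoscain}: by the standard formula for the differential of the endpoint map, $\lambda\, d_u F_x^1(u)[h]=\int_I\sum_i h_i(t)\langle\lambda(t),X_i(\gamma(t))\rangle\,dt$, where $t\mapsto\lambda(t)\in T^*_{\gamma(t)}M$ is the pull-back of $\lambda$ by the flow of the control; equating this functional with $\langle u,\cdot\rangle$ forces $u_i(t)=\langle\lambda(t),X_i(\gamma(t))\rangle$ for a.e.\ $t$, which is precisely the normal extremal equation. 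Consequently $(\gamma(t),\lambda(t))$ is an integral curve of the Hamiltonian vector field $\vec h$ of the sub-riemannian Hamiltonian $h(q,p)=\tfrac12\sum_{i=1}^k\langle p,X_i(q)\rangle^2$ on $T^*M$, with $\lambda(1)=\lambda$; in particular $\gamma$ and $\lambda(\cdot)$ are smooth.

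The second identity then produces the periodicity. The operator $d_x F_x^1(u)\colon T_xM\to T_{\gamma(1)}M$ is the differential of the flow of the control system from time $0$ to time $1$, and the variational (adjoint) equation along the extremal gives $\lambda(0)=\big(d_x F_x^1(u)\big)^{*}\lambda(1)$; since $\lambda(1)=\lambda$, the identity $\lambda\, d_x F_x^1(u)=\lambda$ reads $\lambda(0)=\lambda$, hence $\lambda(0)=\lambda(1)$. As $\gamma$ is a loop, $\gamma(0)=\gamma(1)=x$, so the lift $t\mapsto(\gamma(t),\lambda(t))\in T^*M$ closes up at the endpoints $t=0,1$; since $\vec h$ is an \emph{autonomous} vector field on $T^*M$ (the frame, hence $h$, may be taken time-independent, and in any case normal extremals and $h$ are intrinsic to the sub-riemannian structure), this integral curve extends to a $1$-periodic one. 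Thus $\gamma$ is the projection to $M$ of a periodic trajectory of the sub-riemannian Hamiltonian vector field, i.e.\ of a periodic normal extremal.

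It remains to record that such a $\gamma$ extends to a closed sub-riemannian geodesic in the sense of Definition~\ref{def:closedGeod}. Smoothness has already been obtained; conservation of $h$ along $\vec h$ gives $|\dot\gamma(t)|^2=\sum_i u_i(t)^2=2h(\gamma(t),\lambda(t))\equiv\text{const}$, so $\gamma$ has constant speed, and since $u\not\equiv0$ this constant is positive, so $\gamma$ is non-constant (indeed an immersion); local length-minimality is the classical fact that sufficiently short arcs of a normal extremal minimise length among horizontal curves with the same endpoints \cite{AgrachevBarilariBoscain, Montgomery}. Identifying the endpoints of $[0,1]$, the $1$-periodic curve $\gamma$ therefore satisfies all the requirements of Definition~\ref{def:closedGeod}. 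The main obstacle in this argument is the careful translation of the abstract Lagrange condition \eqref{eq:lagrange} into the Hamiltonian picture: one must identify the $L^2$-component of \eqref{eq:lagrange} with normality of $\gamma$ together with the prescription $\lambda(1)=\lambda$, and the $\R^m$-component with the transpose of the variational equation between the times $0$ and $1$, that is, with the equality $\lambda(0)=\lambda(1)$; once these two translations are in place, periodicity and hence the statement follow from standard sub-riemannian ODE theory.
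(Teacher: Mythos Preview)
Your proposal is correct and follows essentially the same approach as the paper: split \eqref{eq:lagrange} along the product $L^2(I,\R^k)\times\R^m$, identify the $L^2$-component as the normal-extremal equation with final covector $\lambda$ (via the standard formula for $d_uF_x^1$), identify the $\R^m$-component as $\lambda(0)=(d_x\varphi_u^{0,1})^*\lambda=\lambda=\lambda(1)$, and then invoke the standard local-minimality and constant-speed results for normal extremals. One small slip in your aside on the degenerate case $u\equiv0$: it is not true that $d_{(0,x)}G=0$, since the $L^2$-part $d_0F_x^1$ has image $\Delta_x$; rather, \eqref{eq:lagrange} is satisfied by any nonzero $\lambda$ annihilating $\Delta_x$, and the resulting lift is constant in $T^*M$, so the case is still correctly discarded.
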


\begin{proof}
Given coordinates on a neighborhood $U\simeq L^{2}(I, \R^k)\times \R^m$ of $\gamma \in \Omega$, we can write:
\be \Lambda \cap U=\{(u,x)\,|\, G(u,x)=F_x^1(u)-x=0\}.\ee

Let us denote by $\varphi_{u}^{s,t}:W\to M$ the flow from time $s$ to time $t$ of the (time dependent) vector field $\sum_i u_iX_i$; then we can write:
\be d_{(u,x)}G=\left (d_uF_x^1, d_x\varphi_{u}^{0,1}-\mathbbm{1}\right)\quad \textrm{and}\quad d_{(u,x)}J=(u, 0).\ee
Thus equation \eqref{eq:lagrange} can be rewritten as:
\be \label{eq:lagrange2}\lambda d_uF_x^1=u\quad \textrm{and}\quad \lambda(d_x\varphi_{u}^{0,1}-\mathbbm{1})=0.\ee
The equation on the left of \eqref{eq:lagrange2} says that $u$ is the projection of the normal extremal \cite[Proposition 8.9]{AgrachevBarilariBoscain}:
\be \lambda:I\to T^*M, \quad \lambda(t)=(\varphi_u^{t,1})^*\lambda.\ee
The equation on the right of \eqref{eq:lagrange2} is equivalent to:
\be\lambda(0)=(\varphi_u^{0,1})^*\lambda=\lambda=\lambda(1)\ee
which tells exactly that the extremal $\lambda(t)$ is periodic.

On the other hand \cite[Theorem 4.61]{AgrachevBarilariBoscain} (see also \cite[Proposition 14]{BoarottoLerario}) implies that the curve $\gamma$ is locally length minimizing and parametrized by constant speed. Since it is the projection of a periodic trajectory of the sub-riemannian Hamiltonian field, identifying the endpoints of the interval of definition gives a map $\gamma:S^1\to M$ satisfying properties (1)-(4) above.
\end{proof}

\subsection{Closed geodesics realizing a given class in $\pi_1(M)$}
Before going to the core of the paper, which will consist in the min-max construction of a closed geodesic in simply connected  contact manifolds, in this short section we show that if  $\pi_1(M)$ is not trivial then there always exists a closed geodesic, no matter the sub-riemannian structure is contact or not. This will be achieved by a minimization process, well known in the literature  as ``direct method in the calculus of variations''.
 
\begin{thm}\label{thm:pi1}
Let $M$ be a compact, connected sub-riemannian manifold such that $\pi_{1}(M)\neq 0$. Then for every \emph{nonzero} $\alpha\in \pi_1(M)$ there exists a closed sub-riemannian geodesic $\bar{\gamma}_\alpha:S^1\to M$ such that $[\bar{\gamma}_\alpha]_{\pi_1(M)}=\alpha.$ Moreover $\bar{\gamma}_\alpha$ minimizes the energy $J$ (and thus also the sub-riemannian length) in its homotopy class:
$$J(\bar{\gamma}_\alpha)=\min \left\{ J(\gamma) \,: \, \gamma \in \alpha \right\}.$$
\end{thm}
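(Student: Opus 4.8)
The plan is to argue by the direct method in the calculus of variations, exploiting the compatibility of $J$ with the strong and weak $W^{1,2}$ topologies established in the first part of the paper, and then to promote the minimizer to a genuine closed geodesic via the Lagrange multiplier characterization of Proposition \ref{prop:CharClosedG}. First I would work in the global chart $\mathcal{U}=L^2(I,\R^l)\times M$, so that every horizontal loop $\gamma$ is represented by its minimal control, $\mu(\gamma)=(u^*(\gamma),\gamma(0))\in\tilde\Lambda$, and $J(\gamma)=\tfrac12\|u^*(\gamma)\|^2$ by \eqref{eq:Ju*}. Fix a nonzero class $\alpha\in\pi_1(M)$ and set $m_\alpha=\inf\{J(\gamma):\gamma\in\alpha\}$; since $M$ is compact and $\pi_1(M)$ acts on the path space there is at least one horizontal loop in the class $\alpha$ (any smooth loop can be uniformly approximated, hence homotoped, to a horizontal one using the bracket-generating condition, cf.\ the chain-of-equalities argument behind $\pi_0(\Lambda)\simeq\pi_1(M)$), so $m_\alpha<\infty$. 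Also $m_\alpha>0$: a loop with $J(\gamma)$ below the threshold $\epsilon$ of Theorem \ref{thm:epsilon} (applied with $k=0$) is homotopic to a constant, hence nullhomotopic, contradicting $\alpha\neq 0$.

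Next I would take a minimizing sequence $\gamma_n\in\alpha$ with $J(\gamma_n)\to m_\alpha$, and pass to its minimal controls $(u_n,x_n)=\mu(\gamma_n)$. Since $\|u_n\|^2=2J(\gamma_n)$ is bounded and $M$ is compact, up to subsequence $u_n\weakto u_\infty$ weakly in $L^2(I,\R^l)$ and $x_n\to x_\infty$ in $M$. By the weak--strong continuity of the endpoint map (Proposition \ref{prop:weak}) and the fact that $\gamma_n\to\gamma_\infty:=A(u_\infty,x_\infty)$ uniformly, $\gamma_\infty$ is a horizontal loop; moreover uniform convergence of loops implies that $\gamma_\infty$ is freely homotopic to $\gamma_n$ for $n$ large, so $[\gamma_\infty]_{\pi_1(M)}=\alpha$. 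Weak lower semicontinuity of the $L^2$-norm together with $J(\gamma_\infty)\le\tfrac12\|u_\infty\|^2$ (note $u_\infty\in A^{-1}(\gamma_\infty)$, so $\|u^*(\gamma_\infty)\|\le\|u_\infty\|$) gives
\[
J(\gamma_\infty)\le\tfrac12\|u_\infty\|^2\le\tfrac12\liminf_n\|u_n\|^2=\liminf_n J(\gamma_n)=m_\alpha,
\]
so $\gamma_\infty$ attains the infimum: $J(\gamma_\infty)=m_\alpha$. Since $m_\alpha>0$, $\gamma_\infty$ is non-constant.

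It remains to show $\bar\gamma_\alpha:=\gamma_\infty$ is a closed sub-riemannian geodesic. Here I would invoke Proposition \ref{prop:CharClosedG}: it suffices to produce local coordinates $U\simeq L^2(I,\R^k)\times\R^m$ around $\gamma_\infty$ and a nonzero $\lambda\in\R^m$ with $\lambda\,d_{(u,x)}G=d_{(u,x)}J$, where $G(u,x)=F^1_x(u)-x$. Distinguish two cases according to whether $\gamma_\infty$ is a regular point of $\Lambda$ (i.e.\ $d_{(u,x)}G$ surjective onto $\R^m$). In the regular case, $\Lambda$ is a Hilbert submanifold near $\gamma_\infty$ and, $\gamma_\infty$ being a minimum of the smooth function $J$ restricted to it (within the fixed homotopy component, which is open and closed in $\Lambda$), the Lagrange multiplier rule yields such a $\lambda$; it is nonzero because $d_{(u,x)}J=(u,0)\ne 0$ as $\gamma_\infty$ is non-constant. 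In the non-regular (abnormal) case, $d_{(u,x)}G$ has image a proper subspace $V\subsetneq\R^m$, and one picks any nonzero $\lambda$ annihilating $V$; then $\lambda\,d_{(u,x)}G=0$, and one must still check $d_{(u,x)}J$ lies in the same relation — this is exactly the content of first-order optimality along all admissible variations (the Pontryagin/Lagrange multiplier theorem for the constrained minimization, allowing the multiplier of $J$ to vanish only in the abnormal situation, which here it does not since $d_{(u,x)}J\neq 0$), so in fact the abnormal multiplier already realizes \eqref{eq:lagrange}. Either way Proposition \ref{prop:CharClosedG} upgrades $\gamma_\infty$ to a smooth closed sub-riemannian geodesic representing $\alpha$, and by construction it minimizes $J$ (hence, by the standard Cauchy--Schwarz relation between $J$ and Length on constant-speed curves, also the sub-riemannian length) in its homotopy class. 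The main obstacle I anticipate is the passage from ``energy minimizer in a homotopy class'' to ``solution of the Lagrange multiplier equation'' at a possibly singular (abnormal) point of $\Lambda$: one must argue carefully that the standard Lagrange multiplier rule still applies — e.g.\ via the Lyusternik-type theorem on the tangent cone to the constraint, or by directly analysing admissible variations — rather than simply quoting the smooth submanifold case.
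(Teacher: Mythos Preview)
Your direct-method argument for the existence of an energy minimizer in the class $\alpha$ is essentially the paper's own proof: minimal controls in the global chart, weak compactness plus compactness of $M$, lower semicontinuity of the norm, and uniform convergence (Lemma~\ref{lem:unifconv}) together with the isomorphism $i_*:\pi_0(\Lambda)\to\pi_0(\Lambda_{\textrm{std}})$ of Theorem~\ref{thm:i*iso} to keep the limit in $\alpha$. Your use of Theorem~\ref{thm:epsilon} (with $k=0$) to get $m_\alpha>0$ is exactly the paper's invocation of Corollary~\ref{cor:nonzero}.

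The genuine gap is in your final step, where you try to upgrade the minimizer to a closed geodesic by forcing it through Proposition~\ref{prop:CharClosedG}. Equation~\eqref{eq:lagrange}, $\lambda\,d_{(u,x)}G=d_{(u,x)}J$, is precisely the \emph{normal} extremal condition. In the singular case you pick a nonzero $\lambda$ annihilating the image of $d_{(u,x)}G$, so $\lambda\,d_{(u,x)}G=0$; but then \eqref{eq:lagrange} would force $d_{(u,x)}J=(u,0)=0$, contradicting non-constancy. Your appeal to the Pontryagin multiplier rule does not rescue this: that rule produces $(\lambda_0,\lambda)\neq(0,0)$ with $\lambda_0\,dJ=\lambda\,dG$, and at a point where $dG$ is not onto nothing prevents $\lambda_0=0$. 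The statement ``$d_{(u,x)}J\neq 0$ rules out the abnormal case'' is simply false, and in a general sub-riemannian manifold there can be strictly abnormal length minimizers which never satisfy \eqref{eq:lagrange}. So Proposition~\ref{prop:CharClosedG} is the wrong tool here.

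The fix is to bypass Proposition~\ref{prop:CharClosedG} and verify Definition~\ref{def:closedGeod} directly, which is what the paper implicitly does. If $\bar\gamma_\alpha$ were not constant-speed, its arclength reparametrization would have the same length and strictly smaller energy; if some subarc were not length-minimizing, replacing it by a shorter horizontal arc with the same endpoints would again strictly decrease the energy. In either case the modified loop is uniformly close to (or a reparametrization of) $\bar\gamma_\alpha$, hence represents the same class in $\pi_0(\Lambda_{\textrm{std}})$ and therefore, by Theorem~\ref{thm:i*iso}, in $\pi_0(\Lambda)\simeq\pi_1(M)$; this contradicts minimality. Thus $\bar\gamma_\alpha$ is a closed sub-riemannian geodesic without any assumption of regularity or normality.
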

\begin{proof}
First of all, since by assumption $M$ is connected, we have $\pi_0(M)=0$. By Theorem \ref{thm:homotopy} it follows  that $\pi_0(\Lambda)\simeq \pi_1(M)$, and   the two will be identified in the rest of the proof. 
Moreover, the assumption that $\alpha\in \pi_0(\Lambda)$ is non null implies, thanks to Corollary \ref{cor:nonzero}, that there exists $\epsilon=\epsilon(\alpha)>0$ such that:
\be\label{eq:infJalpha}
\inf_{\gamma \in \alpha}   J(\alpha) = \epsilon_\alpha >0.
\ee
Let $\gamma_n \in \alpha$ be a minimizing sequence for $J$ (i.e.  $J(\gamma_n)\to \epsilon_\alpha$ as $n\to \infty$) and call $(u_n^*,x_n=\gamma_n(0)) \in {\mathcal U}$ the associated  minimal controls  and the initial points respectively.  By the relation \eqref{eq:Ju*} we clearly have that   
\be\label{eq:Junconv}
\frac 1 2 \|u_n^*\|^2 =   J(\gamma_n) \to   \epsilon_\alpha \quad \text{as } n\to \infty.
\ee 
In particular the sequence  $\{u_n^*\}_{n \in \N} \subset L^2(I, \R^l)$ is bounded and thus weakly converges, up to subsequences, to some control  $\bar{u} \in L^2(I, \R^l)$. On the other hand, the compactness of $M$ ensures that, again up to subsequences,  there exists $\bar{x}$    such that $x_n \to \bar{x}$.  Called $\bar{\gamma}=A(\bar{u},\bar{x})$ the associated limit curve as in \eqref{eq:defA}, the lower-semicontinuity of the norm under weak convergence gives
\be
J(\bar{\gamma}) \leq \frac 1 2  \| \bar{u} \|^2 \leq  \frac 1 2 \liminf_{n \to \infty} \| u^*_n\|^2 = \liminf_{n \to \infty}   J(\gamma_n)= \epsilon_\alpha.
\ee
Thus,  in order to conclude the proof,  it is enough to show that $\bar{\gamma} \in \alpha$.
\\To this aim, using that the  next Lemma \ref{lem:unifconv} yields the uniform convergence of $\gamma_n$ to $\bar{\gamma}$, we infer that: 
$$i_* (\alpha)=i_*([\gamma_n]_{\pi_0(\Lambda)})=  [ \gamma_n ]_{\pi_0(\Lambda_{\textrm{std}})}=  [\bar{\gamma}]_{\pi_0(\Lambda_{\textrm{std}})}=   i_*([\bar{\gamma}]_{\pi_0(\Lambda)}).$$
But now $i_*: \pi_0(\Lambda) \to  \pi_0(\Lambda_{\textrm{std}})$ is an isomorphism thanks to Theorem \ref{thm:i*iso}, therefore  we conclude that  $[\bar{\gamma}]_{\pi_0(\Lambda)}=\alpha$,  as desired.
\\Since $\bar{\gamma}$ minimizes the functional $J$ in its homotopy class, we have that $\nabla (J|_{\Lambda})=0$ at $\bar{\gamma}$  and thus $\bar{\gamma}$ is a smooth closed sub-riemannian geodesic in virtue of Proposition \ref{prop:CharClosedG}.
\end{proof}

In the proof of Theorem  \ref{thm:pi1} we have used the following result, well known to experts but whose proof we recall for the reader's convenience.
\begin{lemma}\label{lem:unifconv}
Let $(u_n,x_n), (\bar{u},\bar{x}) \in {\mathcal U}=L^2(I,\R^l)\times M$ be pointed controls in the global chart and let $\gamma_n=A(u_n,x_n), \bar{\gamma}=A(\bar{u},\bar{x})$ be the associated curves as in \eqref{eq:defA}. Assume that 
$$x_n=\gamma_n(0) \to \bar{\gamma}(0)=\bar{x} \quad \text{in }M, \qquad \text{and} \qquad u_n  \weakto \bar{u} \quad \text{weakly in }L^2(I,\R^l).$$
Then $\gamma_n \to \bar{\gamma}$ uniformly.
  \end{lemma}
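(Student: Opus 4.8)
\textbf{Proof strategy for Lemma \ref{lem:unifconv}.}

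The plan is to write the curves in integral form and estimate the difference $\gamma_n(t) - \bar\gamma(t)$ using a combination of a Gronwall-type argument for the ``drift'' coming from the difference of initial points and flows, and the weak convergence $u_n \weakto \bar u$ to control the oscillatory term. First I would fix a coordinate chart (using a partition-of-unity / finite-cover argument, since $M$ is compact and all the curves are known a priori to stay in a fixed compact set — this follows because $\|u_n\|$ is bounded, hence the sub-riemannian, and therefore the Riemannian, lengths of the $\gamma_n$ are uniformly bounded, so the curves all lie in a fixed compact neighborhood). In such coordinates both curves satisfy $\gamma(t) = \gamma(0) + \int_0^t \sum_{i=1}^l u_i(s) X_i(\gamma(s))\, ds$, where the $X_i$ are smooth, hence Lipschitz with a uniform constant $L$ and uniformly bounded by $C$ on the relevant compact set.

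The key decomposition I would use is, for each $t$,
\be
\gamma_n(t) - \bar\gamma(t) = \big(x_n - \bar x\big) + \int_0^t \sum_i u_{n,i}(s)\big(X_i(\gamma_n(s)) - X_i(\bar\gamma(s))\big)\, ds + \int_0^t \sum_i \big(u_{n,i}(s) - \bar u_i(s)\big) X_i(\bar\gamma(s))\, ds.
\ee
The middle term is bounded by $\int_0^t \|u_n(s)\|_{\R^l}\, L\, |\gamma_n(s)-\bar\gamma(s)|\, ds \le L\|u_n\|_{L^2}\big(\int_0^t |\gamma_n - \bar\gamma|^2\big)^{1/2}$ by Cauchy--Schwarz, which together with the uniform bound on $\|u_n\|_{L^2}$ feeds a Gronwall estimate. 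The last term is the crucial one: since $s \mapsto X_i(\bar\gamma(s))$ is a \emph{fixed} $L^2$ (indeed continuous) function, the weak convergence $u_n \weakto \bar u$ in $L^2(I,\R^l)$ forces $\int_0^t (u_{n,i}-\bar u_i)(s)\, X_i(\bar\gamma(s))\, ds \to 0$ for each fixed $t$; moreover, testing against $\mathbbm{1}_{[0,t]}X_i(\bar\gamma(\cdot))$ and using that weakly convergent sequences are bounded, one gets that this convergence is in fact uniform in $t$ (the family $\{\mathbbm{1}_{[0,t]} X_i(\bar\gamma(\cdot))\}_{t\in I}$ is relatively compact in $L^2$, since it is an equicontinuous-in-$t$ path in $L^2$, so weak convergence is uniform on it). Denote by $\varepsilon_n := |x_n - \bar x| + \sup_{t\in I}\big|\int_0^t \sum_i(u_{n,i}-\bar u_i) X_i(\bar\gamma)\big| \to 0$. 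Then $|\gamma_n(t)-\bar\gamma(t)| \le \varepsilon_n + C'\big(\int_0^t |\gamma_n-\bar\gamma|^2\, ds\big)^{1/2}$, and squaring and applying the integral Gronwall inequality yields $\sup_{t\in I}|\gamma_n(t)-\bar\gamma(t)| \le \varepsilon_n\, e^{C''} \to 0$, which is the claimed uniform convergence.

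The main obstacle is the passage from pointwise-in-$t$ convergence of the oscillatory term to convergence uniform in $t$: a naive estimate only gives, for each fixed $t$, that the term goes to $0$, and one must rule out that the rate degenerates as $t$ ranges over $I$. The clean way is the compactness observation above — the map $t \mapsto \mathbbm{1}_{[0,t]}X_i(\bar\gamma(\cdot))$ is continuous from $I$ into $L^2(I,\R)$ (its modulus of continuity is controlled by $\|X_i(\bar\gamma(\cdot))\|$ on short intervals), hence its image is compact, and a bounded sequence of functionals converging pointwise on a compact set converges uniformly there; alternatively one can invoke that $u_n \weakto \bar u$ in $L^2$ implies the primitives $t\mapsto \int_0^t u_n$ converge uniformly (being equicontinuous by Cauchy--Schwarz and pointwise convergent), combined with an integration by parts against the $C^1$ (or $W^{1,2}$) function $X_i(\bar\gamma(\cdot))$ — but this requires a little care since $\bar\gamma$ is only $W^{1,2}$, so I would rather use the compactness route. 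Everything else is a routine Gronwall argument once the curves are confined to a fixed compact set in a fixed chart.
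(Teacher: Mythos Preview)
Your argument is correct and follows essentially the same route as the paper: identical three-term decomposition of $\gamma_n(t)-\bar\gamma(t)$, Lipschitz control of the $X_i$-term, and weak convergence for the oscillatory term. The only differences are cosmetic: the paper closes the estimate by choosing a short time $T$ so that the Lipschitz term has coefficient $<\tfrac12$ and absorbing it into the left-hand side (then iterating), whereas you run a Gronwall inequality on the whole interval; and you justify the uniformity in $t$ of the oscillatory term via compactness of $\{\mathbbm 1_{[0,t]}X_i(\bar\gamma)\}_{t\in I}$ in $L^2$, a point the paper leaves implicit.
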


\begin{proof}
It is enough to show  that there exists $T=T(\sup_{i=1,\ldots,l} \|X_i\|_{C^1(M)}, \sup_{n \in \N} \|u_n\|)>0$ such that $\gamma_n|_{[0,T]}\to \bar{\gamma}|_{[0,T]}$ uniformly.
\\Since by assumption $u_n \weakto \bar{u}$, by Banach-Steinhaus Theorem we know that $\sup_{n} \|u_n\|<\infty$ and therefore  the Cauchy-Schwartz inequality  gives
$$d_{\textrm{SR}}(\gamma_n(t), \gamma_n(0)) \leq \int_0^t |\dot \gamma_n| \leq t^{1/2} \, \left( \int_0^t |\dot \gamma_n|^2\right)^{1/2} \leq  \sqrt{2 t}\, J(\gamma_n)^{1/2} \leq  t^{1/2} \sup_n \|u_n\|,  $$
where  $d_{\textrm{SR}}$ is of course the sub-riemannian distance. In particular, as  $x_n \to \bar{x}$,  there exists  $T=T(\sup_n \|u\|_n)>0$  such that  $\gamma_n|_{[0,T]}$ are all contained in a fixed coordinate neighborhood $W$ of $\bar{x}$. Observe also that there exists $C_L>0 $ such that 
\be\label{eq:XiLip}
 | X_i (x_1) - X_i(x_2) | \leq C_L  |x_1-x_2| \quad \forall x_1,x_2 \in W.
 \ee
For $t \in  [0,T]$ we can then estimate
\begin{eqnarray}
|\bar{\gamma}(t)-\gamma_n(t)|&\leq& |\bar{\gamma}(0)-\gamma_n(0)| + \left|\int_0^t \sum_{i=1}^l \bar{u}_i(s) X_i(\bar{\gamma}(s))-u_{n,i}(s) X_i(\gamma_n(s)) \, d s   \right| \nonumber \\
&\leq& |\bar x- x_n|+ \left|\int_0^t \sum_{i=1}^l (\bar{u}_i(s)- u_{n,i}(s)) \; X_i(\bar{\gamma}(s)) \, d s  \right|  \nonumber \\
         && \qquad \qquad  +   \left| \int_0^t \sum_{i=1}^l \bar{u}_i(s) \big(X_i(\bar{\gamma}(s))- X_i(\gamma_n(s))\big) \, d s   \right| .
\end{eqnarray}
Now the integral in the first line converges to $0$ as $n\to \infty$  due to the weak convergence $u_n\weakto \bar{u}$ in $L^2(I, \R^l)$. The integral  in the second line can be easily estimated using   \eqref{eq:XiLip} and the Cauchy-Schwarz inequality as:
$$ \left| \int_0^t \sum_{i=1}^l \bar{u}_i(s) \big(X_i(\bar{\gamma}(s))- X_i(\gamma_n(s))\big) \, d s   \right| \leq C  \,  \sqrt{T} \, \|\bar u\| \, \sup_{s \in [0,T]} |\gamma_n(s)-\bar{\gamma}(s)| , $$
where $C$ depends just on $C_L$ and dimensional constants. Since by lower semicontinuity we know that  $\|\bar u\| \leq  \liminf_n \|u_n\|  \leq \sup_n \|u_n\|$, the combination of the three last inequalities gives
$$\sup_{t \in [0,T]} |\bar{\gamma}(t)-\gamma_n(t)|\leq \epsilon_n + C \sup_{n \in \N} \|u_n\|  \, \sqrt{T} \,  \sup_{t \in [0,T]} |\bar{\gamma}(t)-\gamma_n(t)|,$$
where $\epsilon_n \to 0$ as $n\to \infty$.
Choosing finally $T=T(C_L, \sup_{n \in \N} \|u_n\|)>0$ such that $ C \sup_{n \in \N} \|u_n\|  \, \sqrt{T} \leq 1/2$, we can absorb the rightmost term into the left hand side and get 
$$\sup_{t \in [0,T]} |\bar{\gamma}(t)-\gamma_n(t)|\leq  2 \epsilon_n \to 0, $$ 
as desired.
\end{proof}

\section{Contact sub-riemannian manifolds}\label{sec:contact}
A contact sub-riemannian manifold is a sub-riemannian manifold $(M, \Delta)$ such that $\Delta\subset TM$ is a contact distribution (see \cite[Section 6.1.2]{Montgomery} for more details). 
\subsection{Singularities of the loop space in the contact case}
Recall that a loop $\gamma \in \Lambda$ is said \emph{regular} (or smooth) if the following holds: denoted with $U\simeq L^{2}(I, \R^k)\times \R^m$ a neighborhood   of  $\gamma$ and with $G(u,x)=F_x^1(u)-x$, then $G$ is a submersion at $\gamma$. The set of regular points of $\Lambda$ is denoted by $\textrm{Reg}(\Lambda)$.  If instead $\gamma$ is not regular then it is called \emph{singular} and  the family of singular points of $\Lambda$ is denoted with $\textrm{Sing}(\Lambda)$.

\begin{prop}\label{prop:singularities}
If $M$ is a \emph{contact} manifold, then:
\be \textrm{Sing}(\Lambda)=\{\gamma:I\to M\,|\, \gamma(t)\equiv \gamma(0)\}.\ee
\end{prop}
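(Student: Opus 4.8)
The plan is to rewrite the condition ``$G$ is not a submersion at $\gamma$'' in terms of the existence of a nonzero \emph{abnormal lift} along $\gamma$, and then to use the contact hypothesis to show that such a lift can only exist when $\gamma$ is constant. For the easy inclusion, note that a constant loop $\gamma_x$ is represented in any chart by the pair $(0,x)$, and $\varphi_0^{0,1}=\mathrm{id}$, so the identity $d_{(u,x)}G=\bigl(d_uF_x^1,\ d_x\varphi_u^{0,1}-\mathbbm 1\bigr)$ recalled in the proof of Proposition~\ref{prop:CharClosedG} reduces to $d_{(0,x)}G=(d_0F_x^1,0)$; since $\mathrm{Im}\,d_0F_x^1=\mathrm{span}\{X_1(x),\dots,X_k(x)\}=\Delta_x$ is a proper subspace of $T_xM$ (because $\Delta$ is a proper subbundle), $G$ is not a submersion at $\gamma_x$. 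This inclusion uses nothing about contactness.

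For the reverse inclusion, let $\gamma=\gamma_{(u,x)}\in\mathrm{Sing}(\Lambda)$, where $X_1,\dots,X_k$ is a local frame for $\Delta$. From $d_{(u,x)}G(v,w)=d_uF_x^1(v)+(d_x\varphi_u^{0,1}-\mathbbm 1)w$ one sees that surjectivity of $d_uF_x^1$ would already force surjectivity of $d_{(u,x)}G$; hence $d_uF_x^1$ is not onto and there is $\lambda\in T^*_{\gamma(1)}M\setminus\{0\}$ annihilating its image. Setting $\lambda(t):=(\varphi_u^{t,1})^*\lambda$ — a curve of covectors along $\gamma$ that is nowhere zero since each $\varphi_u^{t,1}$ is a diffeomorphism — the variation formula $d_uF_x^1(v)=\int_0^1(\varphi_u^{t,1})_*\bigl(\textstyle\sum_i v_i(t)X_i(\gamma(t))\bigr)\,dt$ (see \cite{AgrachevBarilariBoscain}) turns the annihilation into $\langle\lambda(t),X_i(\gamma(t))\rangle=0$ for all $i$ and a.e.\ $t$, i.e.\ $\lambda(t)\in\Delta^\perp_{\gamma(t)}$ for (by continuity) every $t$: $\gamma$ is abnormal. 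Testing $\lambda$ against the second block of $d_{(u,x)}G$ additionally yields $\lambda(0)=\lambda(1)$, so the lift is closed, but this will not be needed.

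Now the contact hypothesis enters. Write $\Delta=\ker\omega$ near $\gamma(I)$ with $\omega$ a contact form; then $\Delta^\perp$ is the line bundle $\R\omega$, so $\lambda(t)=c(t)\,\omega_{\gamma(t)}$ with $c(t)\neq0$. Differentiating the identity $\langle\lambda(t),X_j(\gamma(t))\rangle\equiv0$ and using the adjoint relation $\frac{d}{dt}\langle\lambda(t),Y(\gamma(t))\rangle=\langle\lambda(t),[\sum_i u_i(t)X_i,\,Y](\gamma(t))\rangle$ gives $\sum_i u_i(t)\langle\lambda(t),[X_i,X_j](\gamma(t))\rangle=0$ for every $j$ and a.e.\ $t$. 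Since $\omega(X_i)\equiv0$ one has $\omega([X_i,X_j])=-d\omega(X_i,X_j)$, and $\sum_i u_i(t)X_i(\gamma(t))=\dot\gamma(t)$, so this becomes $d\omega_{\gamma(t)}\bigl(\dot\gamma(t),X_j(\gamma(t))\bigr)=0$ for all $j$. As $X_1(\gamma(t)),\dots,X_k(\gamma(t))$ span $\Delta_{\gamma(t)}$ and $\dot\gamma(t)\in\Delta_{\gamma(t)}$, the non-degeneracy of $d\omega|_\Delta$ — exactly the contact condition — forces $\dot\gamma(t)=0$ a.e., so $\gamma$ is constant. Together with the previous step this proves the asserted equality.

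The step that genuinely uses the contact hypothesis, and which I regard as the crux, is the last one: passing from ``$\gamma$ carries a nonzero abnormal lift'' to the pointwise condition $d\omega(\dot\gamma,\,\cdot\,)|_\Delta=0$ via the adjoint equation and then invoking non-degeneracy. The only care required is regularity bookkeeping — $u\in L^2$ and $t\mapsto\lambda(t)$ only absolutely continuous, so the differentiated identities hold a.e., which is enough — together with sign conventions in the variation and adjoint formulas; none of this is substantial. As an alternative to the explicit computation one could quote the known fact that a contact sub-riemannian structure admits no non-constant abnormal curves and feed it into the two reductions above.
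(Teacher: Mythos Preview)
Your argument is correct and follows the same two reductions as the paper: constant loops are singular because $d_{(0,x)}G$ has image $\Delta_x$, and for the converse one notes that surjectivity of $d_uF_x^1$ already forces $d_{(u,x)}G$ to be surjective. The only difference is that where the paper simply cites \cite[Corollary~4.37]{AgrachevBarilariBoscain} for the fact that in the contact case the zero control is the only critical point of $F_x^1$, you reprove that fact directly via the abnormal lift and the non-degeneracy of $d\omega|_\Delta$ --- exactly the alternative you mention in your last sentence.
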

\begin{proof}
Let us fix coordinates on a neighborhood $U\simeq L^{2}(I, \R^k)\times \R^m$ of a curve $\gamma$. Then we can write:
\be \Lambda \cap U=\{(u,x)\,|\, G(u,x)=F_x^1(u)-x=0\}.\ee
Denoting as above by  $\varphi_{u}^{s,t}:W\to M$ the flow from time $s$ to $t$ of the vector field $\sum_i u_iX_i$, the differential of $G$ acts as:
\be \label{eq:diffcontact}d_{(u,x)}G(\dot u, \dot x)=d_uF_x^1\dot u +d_x\varphi_{u}^{0,1}\dot x-\dot x.\ee
Recall that in the contact case the only critical point of the endpoint map $F_x^1$ is the zero control (\cite[Corollary 4.40]{AgrachevBarilariBoscain}, see also \cite[Chapter 5]{Montgomery}). In particular if $\gamma$ is not a constant curve, then $d_uF_x^1$ is a submersion and consequently $d_{(u,x)}G$ is a submersion as well, implying that $\gamma$ is a regular point of $\Lambda$.
On the other hand, if $\gamma$ is a constant curve, then $u=0$ and the differential $d_0F_x^1$ is not a submersion (its image equals $\Delta_x$); moreover $\varphi_u^{s,t}=\textrm{id}_M$ and consequently $d_x\varphi_{0}^{0,1}\dot x=\dot x$. Substituting this into \eqref{eq:diffcontact} we get:
\be \textrm{im} (d_{(0,x)}G)=\Delta_x,\ee
which shows that $\gamma$ is a singular point of $\Lambda.$
\end{proof}

	\subsection{Palais-Smale property}\label{sec:PS}
	We denote by $g$ the restriction of $J$ to $\Lambda$:
	\be g:\Lambda\to \R, \quad g(\gamma)=J(\gamma).\ee

	Notice that on $\textrm{Reg}(\Lambda)$ the gradient of $g$, denoted with $\nabla g\in L^2(I, \R^k)$, is well defined and coincides with the  projection of $dJ$ on the tangent space to $\Lambda$. On the other hand, if $\gamma \in \textrm{Sing}(\Lambda)$ and $(M, \Delta)$ is contact, then by Proposition \ref{prop:singularities} we know that $\gamma$ is a constant curve and thus $d_{\gamma}J=0$. It is then natural (and we will use this convention) to set $\nabla_{\gamma} g=0$ in this case.

	The following theorem is one of the new main technical tools introduced in this paper, and will play a crucial role in the proof of  the existence of a closed geodesic in contact sub-riemannian manifolds.  
	
	\begin{thm}[Palais-Smale property holds for contact manifolds]\label{thm:PS}
	Let $M$ be a compact, connected, contact sub-riemannian manifold and  let $\{\gamma_n\}_{n\in \N}\subset \Lambda$ be a sequence such that:
	\be g(\gamma_n)\leq E\quad \textrm{and}\quad  \|\nabla_{\gamma_n}g\|\to 0.\ee
	Then there exist  $\bar{\gamma}\in \Lambda$ and a subsequence $\{\gamma_{n_k}\}_{k\in \N}$ such that $\gamma_{n_k}\to \overline \gamma$ strongly in $\Omega^{1,2}$.
	\end{thm}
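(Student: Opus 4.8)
The plan is to work in the global chart $\mathcal U = L^2(I,\R^l)\times M$, passing to minimal controls, so that a Palais--Smale sequence $\{\gamma_n\}$ is represented by pointed controls $(u_n, x_n)$ with $\|u_n\|^2 = \tfrac12 J(\gamma_n) \le E$ and $x_n = \gamma_n(0)$. By compactness of $M$ and boundedness of $\{u_n\}$ in $L^2$, after passing to a subsequence we have $x_n \to \bar x$ in $M$ and $u_n \weakto \bar u$ weakly in $L^2(I,\R^l)$; set $\bar\gamma = A(\bar u,\bar x)$, so by Lemma~\ref{lem:unifconv} we already know $\gamma_n \to \bar\gamma$ uniformly and $\bar\gamma$ is a loop, i.e.\ $\bar\gamma \in \Lambda$. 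The whole content of the theorem is therefore to upgrade weak convergence of the controls to \emph{strong} $L^2$ convergence; equivalently, to show $\|u_n\| \to \|\bar u\|$ (since in a Hilbert space weak convergence plus convergence of norms implies strong convergence). Note one must be careful: $u_n$ need not be the minimal control of $\gamma_n$ in the ordinary sense, but we may take it to be, and then the limit $\bar u$ will automatically be the minimal control of $\bar\gamma$ by the weak--strong continuity of the endpoint map (Proposition~\ref{prop:weak}) exactly as in the proof of Proposition~\ref{prop:mu}.

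\textbf{Using the near-criticality condition.} The hypothesis $\|\nabla_{\gamma_n} g\| \to 0$ says that, modulo a vanishing error, $d_{\gamma_n}J$ annihilates the tangent space to $\Lambda$ at $\gamma_n$; concretely, if $\gamma_n$ is regular there is a Lagrange multiplier $\lambda_n \in \R^m$ with $d_{(u_n,x_n)}J = \lambda_n\, d_{(u_n,x_n)}G + (\text{error of norm} \le \|\nabla_{\gamma_n}g\|)$, where $G(u,x) = F_x^1(u) - x$. Writing this out using $d_{(u,x)}J = (u,0)$ and $d_{(u,x)}G = (d_u F_x^1,\ d_x\varphi_u^{0,1} - \mathbbm 1)$ as in the proof of Proposition~\ref{prop:CharClosedG}, the $L^2$-component reads $u_n = \lambda_n\, d_{u_n}F_{x_n}^1 + r_n$ with $\|r_n\| \to 0$. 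The key point is to extract from this a compactness statement: the operator $d_{u}F_{x}^1 : L^2(I,\R^l) \to \R^m$ has finite-dimensional range, its adjoint sends $\R^m$ into an $m$-dimensional (hence compact) subset of $L^2$ once $\lambda_n$ is controlled, and $(u,x)\mapsto d_u F_x^1$ is continuous for the weak topology on $L^2$ (Proposition~\ref{prop:weak}), so $d_{u_n}F_{x_n}^1 \to d_{\bar u}F_{\bar x}^1$ in an appropriate sense. If the multipliers $\lambda_n$ stay bounded, one then gets $u_n \to \lambda_\infty\, d_{\bar u}F_{\bar x}^1 + 0$ strongly, which is precisely strong convergence.

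\textbf{The main obstacle.} The hard part — and, as the introduction warns, the most delicate and original part of the argument — is precisely the control of the Lagrange multipliers $\lambda_n$ near the singular locus $\mathrm{Sing}(\Lambda)$, i.e.\ near constant loops, where $d_{u}F_x^1$ degenerates (its image shrinks to $\Delta_x$, by Proposition~\ref{prop:singularities}). There are two regimes. If $\bar\gamma$ is non-constant, then by Proposition~\ref{prop:singularities} $\bar\gamma \in \mathrm{Reg}(\Lambda)$, $d_{\bar u}F_{\bar x}^1$ is surjective, and by the continuity in Proposition~\ref{prop:weak} the same surjectivity (with a uniform lower bound on the smallest singular value) holds for $\gamma_n$ with $n$ large; this bounds $\|\lambda_n\|$ in terms of $\|u_n\| \le \sqrt{2E}$ and the argument closes. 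If instead $\bar\gamma$ is a constant loop, one cannot invert $d_{u_n}F_{x_n}^1$ uniformly, and here one must run the announced ``blow-up'' argument: choose good (e.g.\ normal/Darboux-type) coordinates adapted to the contact structure around $\bar x$, rescale the controls $u_n$ by $\|u_n\|$ (if $\|u_n\| \to 0$ the sequence converges strongly to the constant $\bar\gamma$ trivially, since $J(\gamma_n) \to 0$), and analyze the limiting rescaled problem — which should be the loop/critical-point problem on the Heisenberg-type nilpotent approximation, where closed geodesics through a point of given small energy are understood explicitly — to derive a contradiction with $\|\nabla_{\gamma_n} g\| \to 0$ unless the controls already converge strongly. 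I expect this contact blow-up analysis, establishing that there is no Palais--Smale sequence concentrating at a constant loop with non-vanishing renormalized energy, to be where essentially all the work lies; the regular-point case and the soft functional-analytic bookkeeping above are routine by comparison.
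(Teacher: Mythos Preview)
Your setup and the treatment of the regular case are essentially the paper's: pass to minimal controls in the global chart, extract a weak limit, use weak--strong continuity of $d_uF_x^1$ (Proposition~\ref{prop:weak}) to see that the ``Lagrange multiplier'' part of $(u_n,0)$ converges strongly, and conclude. Your Lagrange-multiplier phrasing and the paper's projection $(u_n,0)=\nabla g + \mathrm{proj}_{W^n}(u_n,0)$ are two names for the same decomposition.

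The singular case is where your proposal diverges from the paper, and where it has a genuine gap. You propose to rescale $u_n$ by $\|u_n\|$ and analyze a limiting problem on the nilpotent approximation. But note that if $u_n\rightharpoonup 0$ with $\|u_n\|\ge c>0$, the rescaled controls $v_n=u_n/\|u_n\|$ still converge weakly to $0$, so a naive blow-up does not produce a nontrivial limit object to contradict; moreover the loop constraint is not homogeneous under this rescaling (the Heisenberg endpoint map has both linear and quadratic parts), so $v_n$ is not a loop control. One can try to fix this with anisotropic dilations, but you have not indicated how the Palais--Smale condition $\|\nabla g\|\to 0$ survives the rescaling, and this is where the argument would have to bite.

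The paper's route avoids rescaling entirely. It constructs coordinates near the constant loop $\bar\gamma\equiv\bar x$ (Lemma~\ref{lemma:coordinates}) in which the endpoint map \emph{exactly} factors as $F(u,x)=L(x,\hat F(u))$ with $\hat F$ the Heisenberg endpoint map \eqref{eq:endpoint}, while the energy is $\tfrac12\|u\|^2$ plus a correction vanishing to second order at the origin. Two consequences follow. First (Lemma~\ref{lem:rnto0}), in these coordinates $\nabla_{(u_n,x_n)}J=(u_n,0)+r_n$ with $r_n\to 0$ strongly. Second, and this is the key algebraic identity you are missing: for any loop one has $\hat F(u_n)=0$, and an explicit computation with \eqref{eq:endpoint} gives $d_{u_n}\hat F\,u_n=0$, so the radial vector $(u_n,0)$ is \emph{tangent} to $\Lambda$ at $(u_n,x_n)$. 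Hence $(u_n,0)=p_n(u_n,0)=p_n(\nabla J)-p_n(r_n)=\nabla g - p_n(r_n)\to 0$, i.e.\ $u_n\to 0$ strongly. In short, the paper shows directly that in the singular case the sequence must converge strongly to the constant loop, via an Euler-type tangency identity specific to the Heisenberg structure, rather than via any contradiction/blow-up argument.
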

	
	\begin{proof}
	First of all if $\gamma_n$ is a constant curve for infinitely many $n$'s, then trivially  the compactness of $M$ ensures the existence of a limit constant curve $\bar{\gamma}$ such that  the thesis of the theorem holds. Therefore without loss of generality we can assume $\gamma_n \in \textrm{Reg} (\Lambda)$ for every $n \in \N$.
	
	Consider a global chart $\mathcal{U}$ and set $u^*_n=u^*(\gamma_n)$ the minimal control associated to $\gamma_n$. Since by \eqref{eq:Ju*} we know that $ \frac 1 2 \|u^*_n\|^2=g(\gamma_n)\leq E$, there exists a weakly converging subsequence of $\{u_n^*\}_{n\in \N}$ (still call it $u_n^*$); using the compactness of  $M$, we can also  assume that also the sequence of starting points converges $x_n=\gamma_n(0)\to \overline x$:
	\be (u_n^*, x_n)\rightharpoonup (\overline u, \overline x).\ee
	The weak-strong continuity of $\tilde F$ stated in Proposition \ref{prop:weak} then implies: 
	\be
	 \tilde F(\overline u, \overline x)=\lim_{n\to \infty}\tilde F(u^*_n, x_n)=\lim_{n\to \infty} (x_n,x_n)=(\bar x, \bar x),
	 \ee
	which proves that $\overline\gamma =A(\overline u, \overline x)$ is a closed curve. Now two possibilities arise: 
	\begin{enumerate}
	\item  $\overline\gamma$ is a regular point of $\Lambda$; 
	\item $\overline\gamma$ is a singular point of $\Lambda$, and in particular by Proposition \ref{prop:singularities} it is a constant curve. 
	\end{enumerate}
	We will deal with these two cases separately.

	\subsubsection{The case $\overline \gamma$ is a regular point of $\Lambda$}
	 In this case the proof proceeds in a similar way as in \cite[Proposition 10]{BoarottoLerario}; we sketch it here for the reader's convenience. 	 
	 Let $U\simeq L^2(I, \R^{2a})\times \R^{2a+1}$ be a chart containing $\overline \gamma=(\overline u, \overline x)$ (here $k=2a$ and $m=2a+1$ since we are in the contact case); in this chart:
	 \be \dot {\overline \gamma}(t)=\sum_{i=1}^{2a}\overline u_i(t)X_i(\overline\gamma (t), t),\quad \gamma(0)=\overline x.\ee
	 Since $\gamma_{n}$ converges to $\overline \gamma$ uniformly, then eventually there exists a sequence of associated controls $\{(u_n, x_n)\}_{n\geq \overline n}$ in this chart such that $\gamma_n=\gamma_{(u_n, x_n)}$ for all $n\geq \overline n.$ 
	 
	 After possibly shrinking $U$, we have that $\Lambda\cap U=G^{-1}(0)$, where $G$ was defined by $G(u,x)=F_x^1(u)-x$ and the equation $G=0$ is regular in $U$ (because $\overline{\gamma}$ is a regular point of $\Lambda$, the differential of $G$ is therefore submersive on a neighborhood of $(\overline u, \overline x)$), see Section \ref{sec:G}.
	 
	  Let $\{e_1, \ldots, e_{2a+1}\}$ be a fixed basis for $\R^{2a+1}$ and for every $n\in \N$ and $i=1, \ldots, 2a+1$ define:
	 \be \label{eq:defwpullback}
	 w_i^n=d_{(u_n, x_n)}G^* e_i\quad \textrm{and}\quad  W^n=\textrm{span}\{w_1^n, \ldots, w_{2a+1}^n\},
	 \ee
	 where $d_{(u_n, x_n)}G^*:\R^m \to L^2(I, \R^{2a})\times \R^{2a+1}$ is the adjoint operator (here we identify a Hilbert space with its dual). 
	The condition that the point $(\bar{u}, \bar{x})$ is a regular point of $\Lambda$ ensures that $d_{(\bar{u},\bar{x})}G$ is a submersion, hence $d_{(\bar{u}, \bar{x})}G^*$ has maximal rank (i.e. it is injective).
	
	Using the notation of Proposition \ref{prop:singularities}, we can write $d_{(u,x)}G=(d_uF_x^1, d_x\varphi^{0,1}_u-\mathbbm{1})$, and since both $(u_n, x_n)\mapsto d_{u_n}F_{x_n}^1$ and $(u_n, x_n)\mapsto d_{x_n}\varphi^{0,1}_{u_n}$ are weak-strong continuous, then:
	 \be \label{eq:strong}w_i^n\stackrel{\textrm{strong}}{\longrightarrow}\overline w_i=d_{(\overline u, \overline x)}G^*e_i,\quad \forall i=1, \ldots, 2a+1.\ee
		 Notice that $T_{(u_n, x_n)}\Lambda=(W^n)^\perp$ and $\nabla_{(u_n, x_n)}J=(u_n, 0)$, hence we can decompose:
	 \be (u_n, 0)=\nabla_{(u_n, x_n)}g+\textrm{proj}_{W^n}(u_n, 0).\ee
	 By assumption $\nabla_{(u_n, x_n)}g\to 0$ and $\|u_n\|$ is bounded; moreover the fact that $\overline\gamma$ is a regular point of $\Lambda$ implies that $d_{(\overline u, \overline x)}F^*$ has maximal rank and the vectors $\{\overline w_1, \ldots, \overline w_{2a+1}\}$ in \eqref{eq:strong} form a linearly independent set. In particular (up to subsequences) we can assume that the sequence $\{\textrm{proj}_{W^n}(u_n, 0)\}_{n\in \N}$ converges to a limit $\overline w\in W.$ Putting all this together we obtain (up to subsequences):
	 \be (u_n, 0)\stackrel{\textrm{strong}}{\longrightarrow} \overline w, \ee
	 and consequently $(u_n, x_n)\to (\overline w, \overline x).$

	\subsubsection{The case $\overline \gamma$ is a singular point of $\Lambda$}In order to deal with this case we will need to use special coordinates centered at $\overline\gamma$, coordinates provided by the next lemma.
	\begin{lemma}\label{lemma:coordinates}Let $M$ be a contact sub-riemannian manifold and fix $\overline x\in M$. There exist Hilbert manifold coordinates $(u, x)$ on a neighborhood $U\simeq L^2(I, \R^{2a})\times \R^{2a+1}$ centered at the constant curve $\gamma\in \Omega$, $\gamma(t)\equiv \overline x$, such that:
	\be \label{eq:coordinates}F(u, x)=L(x,\hat F(u))\quad \textrm{and}\quad J(u,x)=\frac{1}{2}\|u\|^2+\int_{0}^1\left\langle u(t), R(\gamma_{(u,x)}(t))u(t)\right\rangle dt,\ee
	where:
	\begin{itemize}
	\item[1.] The map $L:\R^{2a+1}\times \R^{2a+1}\to \R^{2a+1}$  is affine in each variable (more precisely $L$ is the left-translation in a Heisenberg group).
	\item[2.]
	The map  $\hat{F}:L^2(I, \R^{2a})\to \R^{2a+1}$  (which corresponds to the endpoint map centered at zero for the  Heisenberg group above) is given by:
	\be\label{eq:endpoint} \hat{F}(u)=\left(\int_{0}^{1}u(s)\,ds,\int_{0}^{1}\left\langle u(s),A\int_{0}^s u(\tau)d\tau \right\rangle ds \right).\ee
	(Here $A$ is the $2a\times 2a$ skew-symmetric matrix, representing the bracket structure in the above Heisenberg group.)
	\item[3.] The map $R:\R^{2a+1}\to \textrm{Sym}(2a, \R)$ is smooth and satisfies:
	\be \label{eq:R}R(0)=0\quad \textrm{and} \quad d_0R=0.\ee
	\end{itemize}
	\begin{proof}
	By \cite[Theorem 6.9]{AgrachevGauthier2} there exists\footnote{The three-dimensional case is proved in \cite[Theorem 2.1]{AgrachevGauthier1} (see also \cite[Theorem 19]{Barilari}). In fact the statement of \cite[Theorem 6.9]{AgrachevGauthier2} claims something much stronger then what we need (the existence of a normal form), but it also makes a slightly stronger assumption (the contact structure should be ``strongly nondegenerate''). However, it follows from the proof of \cite[Theorem 6.9] {AgrachevGauthier2} that relaxing their hypothesis to general contact structures still provides a frame in the form that we need. Alternatively one can redo the proof of \cite[Theorem 2.1]{AgrachevGauthier1} for the $2a+1$ dimensional case and notice that the weaker conclusions still guarantee the existence of the required frame.} an open neighborhood $W$ with coordinates $(x, y, z)\in \R^a\times \R^a\times \R$ centered at $\overline x=(0,0,0)$ such that $\Delta|_W$ is spanned by a sub-riemannian \emph{orhonormal} frame $\{X_1, \ldots, X_a, Y_1, \ldots, Y_a\}$ of the form:
	\be X_i=\underbrace{\partial _{x_i}+\gamma_i\frac{y_i}{2} \partial_z}_{\hat X_i}+ b_{1,i} Z_{1,i}\quad \quad Y_i=\underbrace{\partial _{y_i}-\gamma_i\frac{x_i}{2} \partial_z}_{\hat Y_i}+ b_{2,i} Z_{2,i}\quad\forall i=1, \ldots, a.\ee
	For all $i=1, \ldots, a$ we have that: $0\neq \gamma_i\in \R$, $Z_{1,i}$ and $Z_{2,i}$ are bounded vector fields on $W$ and $b_{1,i}, b_{2,i}:W\to \R$ are functions such that $\label{eq:b} b_{1,i}(0)=b_{2,i}(0)=0$ and $d_{0}b_{1,i}=d_0b_{2,i}=0.$
	
	Denoting by $\hat\Delta=\textrm{span}\{\hat{X}_i, \hat{Y}_i, i=1, \ldots,a\}$, we can write (again by  \cite[Theorem 6.9] {AgrachevGauthier2}):
	\be \hat\Delta=\ker\left(\underbrace{dz-\sum_{i=1}^af_i(0)(x_i dy_i-y_idx_i)}_{\hat\omega}\right)\quad \textrm{and}\quad \Delta=\ker\left(\underbrace{dz-\sum_{i=1}^af_i(x_i dy_i-y_idx_i)}_{\omega}\right)\ee
	for some smooth functions $f_1, \ldots, f_a.$
	
	We consider now the family of differential forms:
	\be \omega_t=(1-t)\hat\omega +t\omega. \ee
	We want to build a family of diffeomorphisms $\psi_t$ of a neighborhood $W'$ of the origin, fixing the origin, such that $\psi_t^*\omega_t=\omega_0$ and 
	\be \label{eq:ordertwo}d_0\psi_1=\textrm{id}+\textrm{terms of order two}.\ee Notice that on a sufficiently small neighborhod of the origin $\omega_t$ is a contact form for all $t\in [0,1]$.
	
		In the new coordinates induced by $\psi_1$ we will have that $(\R^{2a+1}, \Delta)$ is a contact Carnot group (a Heisenberg group, see \cite{AgrachevGentileLerario} for more details on the geometry of endpoint maps for Carnot groups). This choice of coordinates on $W'$ and the trivializing frame $\{\hat X_i, \hat Y_i, i=1, \ldots, a\}$ for $\Delta|_{W'}$ define coordinates on a neighborhood of $\overline \gamma$ in the Hilbert manifold $\Omega$; the endpoint map centered at zero writes exactly as in \eqref{eq:endpoint} (see \cite[Section 2.2]{AgrachevGentileLerario}), and the endpoint map centered at a point $x$ is obtained by composition with the left-translation by $x$ in the Carnot group. 

Using \eqref{eq:ordertwo} we can write the matrix $S$, representing the scalar product on $\hat{\Delta}$ induced by pulling back the metric from $\Delta$ using $\psi_1$, as:
\be S(p)=J\psi_1(p)^TJ\psi_1(p)=\mathbbm{1}+\textrm{terms of order two}, \ee
which will imply the claim.
	
	It remains to prove the existence of such a family $\psi_t$. We use the classical Moser's trick, realizing $\psi_t$ as the flow of a non-autonomous vector field $X_t$. We adapt the proof of  \cite[Theorem 2.5.1]{Geiges}; for the rest of the proof let us use the convention that the symbol $\epsilon(k)$ denotes ``terms of order $k$'' in $p$ as $p\to 0$.
	
	Since the flow of a vector field admits the expansion:
	\be \psi_1(p)=p+\int_{0}^1X_t(p)dt+\epsilon(2),\ee
 it is clearly enough to prove that we can find $X_t$ satisfying $X_t=\epsilon(2).$ 
	
	We write $X_t=H_tR_t+Y_t$ where $R_t$ is the Reeb vector field of $\omega_t$ (which in this case just equals $\partial_z$),  $H_t$ is a smooth function and $Y_t\in \ker(\omega_t)$. We can look for $X_t\in \ker(\omega_t)$ thus setting $H_t\equiv 0$ in the proof of \cite[Theorem 2.5.1]{Geiges}. Then $\psi_t^*\omega_t=\omega_0$ simply writes:
	\be\label{eq:moser} \dot \omega_t+i_{Y_t}d\omega_t=0.\ee
	This is in turn equivalent to the pair of equations:
	\be\label{eq:eqww} (\dot \omega_t+i_{Y_t}d\omega_t)(\partial_z)=0\quad \textrm{and}\quad (\dot \omega_t+i_{Y_t}d\omega_t)|_{\ker \omega_t}=0.\ee
	By construction $i_{Y_t}d\omega_t\partial_z=\mathcal{L}_{Y_t}\omega_t(\partial_z)-d(\omega_t(Y_t))(\partial_z)=0$ and
\be\label{eq:dot}\dot\omega_t=\sum_{i=1}^a(f_i-f_i(0))(x_idy_i-y_i dx_i)=\epsilon(2).\ee
	 Consequently $R_t=\partial_z\in \ker({\dot\omega}_t)$ and thus the first equation in \eqref{eq:eqww} is automatically satisfed. The non-degeneracy of $d\omega_t|_{\ker (\omega_t)}$ implies then that we can find a unique $Y_t$ solving the above equation (as in the proof of Gray's stability Theorem, see \cite[Theorem 2.2.2]{Geiges}). Using matrix and vector notation for differential forms, there exist $\xi_t$ and $\Omega_t$ such that:
	\be \dot\omega_t|_{\ker \omega_t}(v)= \xi_t^T v\quad \textrm{and} \quad d\omega_t|_{\ker \omega_t}(v_1, v_2)=v_1^T\Omega_tv_2.\ee
	We see that \eqref{eq:moser} is equivalent to:
	\be\label{eq:finale} \Omega_tY_t= \xi_t.\ee
	Note that  $ \xi_t=\epsilon(2)$ by \eqref{eq:dot} and, since $ d\omega_t=d\omega+\epsilon(1)$, we also have $\Omega_t=\Omega_0+\epsilon(1)$.

Denoting by $P_t$ the orthogonal projection on the image of the operator $\Omega_t:\ker(\omega_t)\to \R^{2a+1}$, we can write \eqref{eq:finale} as:
\be M_tY_t=\xi_t,\quad M_t=P_t\Omega_t.\ee
Now $M_t:\ker(\omega_t)\to \textrm{im}(\Omega_t)$ is an invertible operator and by construction $M_t=M_0+\epsilon(1).$ As a consequence:
\begin{align} Y_t&=M_t^{-1}\xi_t=(M_0+\epsilon(1))^{-1}\xi_t=(M_0^{-1}+\epsilon(1))\xi_t\\
&=M_0^{-1}\xi_t+\epsilon(1)\xi_t=\epsilon(2)+\epsilon(3)=\epsilon(2).\end{align}
This proves the existence of $X_t=Y_t$ satisfying our requirements. The flow of $X_t$ is defined for all $t\in [0,1]$ at $p=0$ (simply because it fixes the origin), hence it is also defined for all $t\in [0,1]$ on a sufficiently small neighborhood $W'$ of the origin. This concludes the proof.
	\end{proof}

		\end{lemma}
Going back to the proof of Theorem \ref{thm:PS} in the case $\overline\gamma$ is a singular point of $\Lambda$, we fix the coordinates  given by Lemma \ref{lemma:coordinates} and we proceed to show the next intermediate lemma. Note that in such coordinates $J$ is not equal to the $L^{2}$ squared norm of the control, so one should not expect the usual formula $d_{(u,x)}J=u$.
\begin{lemma}\label{lem:rnto0}
Let $\gamma_{(u_n,x_n)}\weakto \bar{\gamma}$, where  $\gamma \equiv \bar{x}$, and consider the coordinates given in Lemma \ref{lemma:coordinates} centered at $\bar{x}$. Then
\be\label{eq:nabla} \nabla_{(u_n, x_n)} J=(u_n,0)+r_n\quad \textrm{with $r_n\stackrel{\textrm{strong}}{\longrightarrow}0$}.\ee 
\end{lemma}

\begin{proof}
In coordinates, the differential of $J$ can be computed as:
\begin{align}
 d_{(u,x)}J(\dot u, \dot x)=&\int_{0}^1\langle \dot u(t), u(t)\rangle dt+2\int_{0}^1\langle \dot u(t), R(\gamma_{(u,x)}(t))u(t)\rangle dt\\
&+\int_{0}^1\left\langle u(t), \left(d_{\gamma_{(u,x)}(t)}R\,d_{(u,x)}F^t (\dot u, \dot x)\right)u(t)\right\rangle dt=(*).
\end{align}
In the previous equation $F^t$ denotes the time-$t$ endpoint map $F^t(u,x)=\gamma_{(u,x)}(t)$  in this chart (such a map was previously denoted by $F^t_x(u)$, this slight abuse will simplify the notation in the next computations) and its differential can be computed as:
\be d_{(u,x)}F^t (\dot u, \dot x)=d_uF_x^t\dot u+d_x\varphi_u^{0, t}\dot x.\ee
In particular:
\begin{align}(*)=&\int_{0}^1\langle \dot u(t), u(t)\rangle dt+\underbrace{2\int_{0}^1\langle \dot u(t), R(\gamma_{(u,x)}(t))u(t)\rangle dt}_{R_1(u,x)\dot u}\\
&+\underbrace{\int_{0}^1\left\langle u(t), \left(d_{\gamma_{(u,x)}(t)}R\,d_uF_x^t\dot u\right)u(t)\right\rangle dt}_{R_2(u,x)\dot u}+\underbrace{\int_{0}^1\left\langle u(t), \left(d_{\gamma_{(u,x)}(t)}R\,d_x\varphi_u^{0, t}\dot x\right)u(t)\right\rangle dt}_{R_3(u,x)\dot x}.\end{align}
 We will prove below that if $(u_n, x_n)\rightharpoonup (0, 0)$ then the three linear operators $R_1(u_n, x_n), R_2(u_n, x_n)$ and $R_3(u_n, x_n)$ all converge to zero (strongly); this directly implies \eqref{eq:nabla}.
 
 Consider first $R_1(u_n, x_n)$. Notice that since $\gamma_{(u_n, x_n)}$ converges uniformly to $\overline \gamma$ (by Proposition \ref{prop:weak})  then for every $\epsilon>0$ there exists $n_1$ such that:
    $$ \sup_{t \in [0,1]} \|R(\gamma_{(u_n, x_n)}(t))\|_{\textrm{Sym}(2a, \R)}\leq \epsilon \quad  \text{for all }  n\geq n_1, $$
 where  $\| \cdot \|_{\textrm{Sym}(2a, \R)}$ denotes the operatorial norm in the space of the symmetric  matrices $\textrm{Sym}(2a, \R)$.  
   Then for all $n\geq n_1$ we have:
 \begin{align}\|R_1(u_n, x_n)\|&=\sup_{\|\dot u\|=1}|R_1(u_n, x_n)\dot u|\leq \sup_{\|\dot u\|=1}\int_{0}^1\left|\langle \dot u(t), R(\gamma_{(u_n,x_n)}(t))u_n(t)\rangle\right|dt\\
 &\leq \sup_{\|\dot u\|=1} \epsilon\int_{0}^1 |\dot u(t)| |u_n(t)|dt  \; \leq \sup_{\|\dot u\|=1} \epsilon \left(\int_{0}^1  |\dot u(t)|^2dt\right)^{1/2} \left(\int_{0}^1  |u_n(t)|^2dt\right)^{1/2}\\
 &\leq \epsilon \sqrt{C_0},
 \end{align}
where $C_0=\sup_n \|u_n\|< \infty$ by Banach-Steinhaus Theorem since $u_n\weakto 0$.  This proves in particular that $\|R_1(u_n, x_n)\|\to 0$.

As for the term $R_2$, notice that the uniform convergence of $\gamma_n$ implies that for every $\epsilon>0$ there exists $n_2$ such that: 
$$ \sup_{t \in [0,1]}   \|d_{\gamma_{(u_n, x_n)}(t)}R\|_{\textrm{Hom}(\R^{2a+1}, \textrm{Sym}(2a, \R))} \leq \epsilon  \quad  \text{for all }  n\geq n_2,$$
where $\| \cdot \|_{\textrm{Hom}(\R^{2a+1}, \textrm{Sym}(2a, \R))} $ denotes the norm in the space of linear maps from $\R^{2a+1}$ with values in $\textrm{Sym}(2a, \R)$.
Thus for all $n\geq n_2$ we can estimate:
\begin{align}
 \int_{0}^1\left |\left\langle u_n(t), \left(d_{\gamma_{(u_n,x_n)}(t)}R\,d_{u_n}F_{x_n}^t\dot u\right)u_n(t)\right\rangle\right| dt  &\leq \epsilon \int_{0}^1|u_n(t)|^2\, |d_{u_n}F_{x_n}^t\dot u| \, dt\\
&\label{eq:step}\leq \epsilon  \int_{0}^1|u_n(t)|^2\left(|d_0F_{0}^t\dot u|+C_1\right)dt,
\end{align}
where in the last line we have used Proposition \ref{prop:weak} to infer that  $u\mapsto d_uF_x^t$ is weakly-strongly continuous (and the convergence is uniform in $t$, see \cite[Proposition 21 and Lemma 24]{BoarottoLerario}).
On the other hand, using the expression given in \cite[Proposition 5.25]{Montgomery} for the differential of the endpoint map (at the zero control), we have: 
 $$ |d_0F_{0}^t\dot u|=\left|\int_{0}^t\sum_{i=0}^{2a} \hat{X}_i(0)\dot u_i(t)dt\right|\leq C_2 \int_{0}^t  |\dot u(t)|dt \leq C_2  \|\dot u\|.$$
Plugging the last estimate   into \eqref{eq:step}, gives:
\be \int_{0}^1\left |\left\langle u_n(t), \left(d_{\gamma_{(u_n,x_n)}(t)}R\,d_{u_n}F_{x_n}^t\dot u\right)u_n(t)\right\rangle\right| dt     \leq \epsilon  \;C_0 (C_1+C_2 \| \dot u\| ),\quad \forall n\geq n_2,  
\ee
which proves that   $\|R_2(u_n, x_n)\|\to 0$.

Concerning $R_3$ we use the fact that the maps $(u,x)\mapsto \varphi_u^{0, t}(x)$ and $(u,x)\mapsto d_x\varphi_u^{0,t}$ are weakly-strongly continuous, with uniform convergence in $t$. The first statement is just the weak-strong continuity of the endpoint map as a function of both $u$ and $x$, as in Proposition \ref{prop:weak}; the second statement follows from the fact that $F^t(u,x)$ is differentiable in $x$ (\cite[Section 2.4.1]{AgrachevSachkov}) and hence its differential $\partial_xF^{t}(x,t)$ solves the ODE obtained linearizing \eqref{eq:control}; in particular Proposition \ref{prop:weak} applies to this new control system and if $(u_n, x_n)\rightharpoonup (\overline u, \overline x)$ then the matrix $d_{x_n}\varphi_{u_n}^{0,t}$ converges to $d_{\overline x}\varphi_{\overline u}^{0,t}$ uniformly in $t$. Thus, for every $\epsilon>0$ and $n\geq n_2$ we have:
\begin{align} \|R_3(u_n,x_n)\|&=\sup_{\|\dot x\|=1}|R_3(u_n, x_n)\dot x|  \leq  \int_{0}^1\left|\left\langle u_n(t), \left(d_{\gamma_{(u_n,x_n)}(t)}R\,d_{x_n}\varphi_{u_n}^{0, t}\dot x\right)u_n(t)\right\rangle\right| dt\\
&\leq \int_{0}^1|u_n(t)|^2 \|d_{\gamma_{(u_n,x_n)}(t)}R\|_{\textrm{Hom}(\R^{2a+1}, \textrm{Sym}(2a, \R))} |d_{x_n}\varphi_{u_n}^{0, t}\dot x| \, dt\leq \epsilon \cdot C_0 C_4,
\end{align}
which shows $ \|R_3(u_n,x_n)\|\to 0$ and together with the previous two estimates finally gives \eqref{eq:nabla}.
\end{proof}

We can now conclude the proof of Theorem  \ref{thm:PS}.  
\\Consider the tangent space $T_{(u_n, x_n)}\Lambda \subset L^{2}(I, \R^{2a})\times \R^{2a+1}$ in the charts of Lemma \ref{lemma:coordinates}:
\begin{align} T_{(u_n, x_n)} \Lambda &=\{(\dot u, \dot x)\,|\, d_{(u_n, x_n)}G(\dot u, \dot x)=0\}\\
&=\left\{(\dot u, \dot x)\,\bigg|\, \frac{\partial L}{\partial x}(x_n, \hat{F}(u_n))\dot x+\frac{\partial L}{\partial y}(x_n, \hat{F}(u_n))d_{u_n}\hat{F}\dot u-\dot x=0\right\},
\end{align}
where $F$ was introduced in \eqref{eq:coordinates}.
We first claim  that $(u_n, 0)\in T_{(u_n, x_n)}\Lambda,$ i.e. that:
\be\label{eq:du} \frac{\partial L}{\partial y}(x_n, \hat{F}(u_n))d_{u_n}\hat{F}u_n=0.\ee
In order to show \eqref{eq:du}, we   compute the differential of $\hat{F}$ using equation \eqref{eq:endpoint}:
\be\label{eq:diff} d_{u}\hat F\dot u=\left(\int_{0}^{1}\dot u(s)\,ds,\int_{0}^{1}\left\langle \dot u(s),A\int_{0}^s u(\tau)d\tau \right\rangle ds+\int_{0}^{1}\left\langle u(s),A\int_{0}^s \dot u(\tau)d\tau \right\rangle ds \right).\ee
Notice now that, since the curve $\gamma_{u_n,x_n}$ is a loop, we must have $F(u_n,x_n)=x_n$. Writing this condition in  the coordinates \eqref{eq:coordinates} gives that $L(x_n, \hat{F}(u_n))=F(u_n, x_n)=x_n$, which in turn implies $\hat{F}(u_n)=0$: this is because $L(x_n, \hat{F}(u_n))=x_n$ is the element $\hat{F}(u_n)$ left-translated by $x_n$ in a Lie group where $0$ is the identity element. Writing the condition $\hat{F}(u_n)=0$ using the explicit expression \eqref{eq:endpoint} yields:
\be\label{eq:relations} \int_{0}^{1} u_{n}(s)\,ds=0\quad \textrm{and} \quad \int_{0}^{1}\left\langle u_n(s),A\int_{0}^s u_n(\tau)d\tau \right\rangle ds=0.\ee
Evaluating $d_{u_n}\hat F u_n$ using relations \eqref{eq:relations} in equation \eqref{eq:diff} with $\dot u=u_n$ implies \eqref{eq:du}.

We now use claim \eqref{eq:du} in order to conclude the proof.  Let us denote by $p_n:L^{2}(I, \R^{2a})\times \R^{2a+1}\to T_{(u_n, x_n)}\Lambda$ the orthogonal projection.  Using  the fact that $(u_n, 0)\in T_{(u_n, x_n)}\Lambda$, we rewrite equation \eqref{eq:nabla} as:
\begin{align} (u_n, 0) =p_n(u_n, 0)&= p_n(\nabla_{(u_n, x_n)} J)-p_n(r_n)\\
\label{eq:final!}&=\nabla_{(u_n, g_n)}g-p_n(r_n).
\end{align}
By assumption $\nabla_{(u_n, g_n)}g\to 0$; moreover since  $p_n$ has norm one (it  is a projection operator) and $r_n\to 0$ strongly  by Lemma \ref{eq:nabla},  also $p_n(r_n)\to 0$ strongly. Together with \eqref{eq:final!} this finally proves that $(u_n, 0)\to (0,0)$ strongly and finishes the proof of Theorem \ref{thm:PS}.
	\end{proof}
	
	\subsection{A min-max principle and the existence of a closed geodesic}	\label{sec:min-max}
	Let $\alpha \in \pi_k(\Lambda)$ and observe that an element  $f \in \alpha$ is a continuous map from $S^k$ with values into $\Lambda=\Lambda(M)$, the space of horizontal loops.  Set
	\be\label{eq:defc}
	c_{\alpha}= \inf_{f \in \alpha} \;   \sup_{\theta \in S^k} \;     J( f(\theta)).
	\ee
	The goal of the present section is to prove the following general min-max principle.
	
	\begin{thm} \label{thm:minmax}
	Let $(M,\Delta)$ be a compact contact sub-riemannian manifold, fix a class $\alpha \in \pi_k(\Lambda)$ and  consider the min-max level $c_\alpha$ defined in \eqref{eq:defc}. If $c_\alpha$ is \emph{strictly} positive,  then there exists a closed geodesic $\gamma_\alpha \in \Lambda$ realizing the min-max level, i.e. 
	$J(\gamma_\alpha)=c_\alpha$. 
	\end{thm}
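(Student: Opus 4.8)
The plan is to run a standard min-max scheme with the Deformation Lemma, taking care of the singular set of $\Lambda$. First I would observe that the Palais-Smale condition (Theorem \ref{thm:PS}) provides exactly the compactness needed: any sequence $\gamma_n\in\Lambda$ with $g(\gamma_n)\le E$ and $\|\nabla_{\gamma_n}g\|\to 0$ subconverges strongly to some $\bar\gamma\in\Lambda$. Since $c_\alpha>0$, such a limit $\bar\gamma$ cannot be a constant curve (constant curves have $J=0<c_\alpha$), hence by Proposition \ref{prop:singularities} it lies in $\mathrm{Reg}(\Lambda)$, where $\Lambda$ is a genuine Hilbert submanifold and $\nabla_{\bar\gamma}g=0$ means $\bar\gamma$ solves the Lagrange multiplier equation \eqref{eq:lagrange} with $\lambda\ne 0$ (the multiplier is nonzero precisely because $dJ_{\bar\gamma}\ne 0$, as $J(\bar\gamma)=c_\alpha>0$). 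Then Proposition \ref{prop:CharClosedG} upgrades $\bar\gamma$ to a genuine closed sub-riemannian geodesic. So the whole theorem reduces to producing a Palais-Smale sequence at level $c_\alpha$.

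To produce such a sequence I would argue by contradiction in the usual way: suppose there is no critical point at level $c_\alpha$; then one shows there is no Palais-Smale sequence at level $c_\alpha$, i.e. there exist $\epsilon_0>0$ and $\delta_0>0$ so that $\|\nabla_\gamma g\|\ge\delta_0$ for every $\gamma$ with $|g(\gamma)-c_\alpha|\le\epsilon_0$. The key point here is that the gradient flow of $g$ must be shown to exist and to stay away from the singular set $\mathrm{Sing}(\Lambda)$, which consists of constant curves sitting at energy level $0$. Because we only deform sub-level sets near energy $c_\alpha>0$, the flow never reaches energy $0$, hence never touches $\mathrm{Sing}(\Lambda)$, and on the open regular set $\mathrm{Reg}(\Lambda)$ the pseudo-gradient vector field for $g$ is well-defined and locally Lipschitz, so its flow is well-defined for the relevant time interval. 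Using this flow one obtains a Deformation Lemma: for suitable $0<\epsilon<\epsilon_0$ there is a continuous deformation $\eta:\Lambda\times[0,1]\to\Lambda$ with $\eta(\cdot,0)=\mathrm{id}$ and $\eta(\{g\le c_\alpha+\epsilon\},1)\subset\{g\le c_\alpha-\epsilon\}$. Taking $f\in\alpha$ with $\sup_{\theta}J(f(\theta))\le c_\alpha+\epsilon$ (which exists by definition of $c_\alpha$) and composing with $\eta(\cdot,1)$ gives a map $f'=\eta(f(\cdot),1)\in\alpha$ with $\sup_\theta J(f'(\theta))\le c_\alpha-\epsilon$, contradicting the definition of $c_\alpha$ as an infimum. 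Hence a critical point at level $c_\alpha$ exists.

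The main obstacle, and the point requiring the most care, is the construction of the deformation flow near the singular set: one must guarantee that the pseudo-gradient trajectories starting from the region $\{c_\alpha-\epsilon\le g\le c_\alpha+\epsilon\}$ do not escape to the singular constant curves and that the flow is complete on the relevant set. Here the energy estimate $g\ge c_\alpha-\epsilon>0$ along the flow (since $g$ decreases along pseudo-gradient trajectories) is what rules this out, but one also needs the pseudo-gradient field to be globally defined and locally Lipschitz on all of $\mathrm{Reg}(\Lambda)\supset\{g\ge c_\alpha-\epsilon\}$, and one should check that $g$ restricted to $\mathrm{Reg}(\Lambda)$ is $C^1$ with locally Lipschitz gradient (this is standard since $J$ is smooth on $\Omega$ and $\mathrm{Reg}(\Lambda)$ is a smooth submanifold). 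A secondary subtlety is verifying that $c_\alpha<\infty$ so that the scheme is non-vacuous, which is immediate because any single $f\in\alpha$ realized by, say, smooth loops has finite energy $\sup_\theta J(f(\theta))<\infty$ by compactness of $S^k$ and continuity of $J\circ f$. With these ingredients the argument is the classical Lyusternik-Schnirelmann/Lyusternik-Fet min-max, and the novelty is entirely in having the Palais-Smale property of Theorem \ref{thm:PS} available and in the observation that positivity of $c_\alpha$ keeps us in the regular stratum.
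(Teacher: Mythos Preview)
Your proposal is correct and follows essentially the same route as the paper: argue by contradiction, use Palais--Smale (Theorem~\ref{thm:PS}) to show that the absence of a critical point at level $c_\alpha$ forces a uniform gradient lower bound near that level, run the pseudo-gradient flow on $\mathrm{Reg}(\Lambda)$ (which is safe since the energy stays bounded away from zero), and contradict the definition of $c_\alpha$ via the resulting deformation. The paper packages the flow argument into a separate Deformation Lemma (Lemma~\ref{lem:deformation}) and phrases the Palais--Smale consequence as ``an entire interval $[c_\alpha-\delta,c_\alpha+\delta]$ is free of critical values'' rather than as a gradient lower bound, but these are equivalent and the overall architecture is the same.
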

		
	\begin{remark}\label{rem:c=0}
	Notice that if $c_\alpha>0$ then it must be  $\alpha \neq 0$. Indeed,  if $\alpha=0$,  then every $f \in \alpha$ is homotopic to a constant map  and  therefore $c_\alpha=0$. In this case the min-max level $c_\alpha$  is trivially realized by constant curves which, in virtue of  Definition \ref{def:closedGeod}, are not closed geodesics.
	\end{remark}
	\noindent
	For every $a>0$ let us denote:
	\be\label{eq:defLambdaa}
	\Lambda^a=\{ \gamma \in \Lambda \,: \, J(\gamma) \leq a\}, \quad \text{and} \quad  \Lambda^b_a= \Lambda^b\setminus \Lambda^a, \; \text{for } 0\leq a<b.
	\ee
	We say that $c >0$ is a \emph{critical value} if there exists a curve $\gamma$ which is a critical point for $g=J|_{\Lambda}$, i.e. a horizontal loop such that $g(\gamma)=J(\gamma)=c$ and $\nabla_{\gamma} g =0$ (since $c>0$, then $\Lambda $ is smooth near $\gamma$ and the classical definition of critical point applies, as proved in Proposition \ref{prop:CharClosedG}).
	
	In order to prove Theorem \ref{thm:minmax} we will use the following deformation  lemma, which says that if there are no critical values in the interval $[a,b]$ then we can continuously deform $\Lambda^b$ into $\Lambda^a$ without moving the elements in $\Lambda^{a/2}$.
	
	\begin{lemma}[Deformation lemma]\label{lem:deformation}
	Let $(M,\Delta)$ be a compact connected contact manifold.
	Let $0< a<b$ and assume that $g= J|_\Lambda$ has no critical values in the interval $[a,b]$. Then  there exists a homotopy  $H:[0,1]\times \Lambda^b \to \Lambda^b$ such that
	\begin{enumerate}
	\item   $H(0,\cdot)=Id_{\Lambda^b}$, 
	\item   $H(1,\gamma)\in \Lambda^a$ for every $\gamma \in \Lambda^b$,
	\item   $H(t,\gamma)= \gamma$ for every $\gamma \in \Lambda^{a/2}$ and every  $t \in [0,1]$.
	\end{enumerate}
	\end{lemma}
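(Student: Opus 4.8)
The argument is the classical deformation scheme of critical point theory; the two non-classical inputs are Proposition~\ref{prop:singularities}, which guarantees that the energy window we work in is an honest smooth Hilbert manifold, and Theorem~\ref{thm:PS}, which supplies the Palais--Smale condition. The first thing I would establish is a uniform gradient bound: since $a>0$, Proposition~\ref{prop:singularities} shows that $g^{-1}((a/4,+\infty))$ contains no constant loop, hence lies in $\textrm{Reg}(\Lambda)$, where $g$ is smooth and $\nabla g$ is a smooth (in particular locally Lipschitz) tangent vector field; and there is $\delta>0$ with $\|\nabla_\gamma g\|\ge\delta$ for all $\gamma\in g^{-1}([a,b])$. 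Indeed, otherwise one picks $\gamma_n\in g^{-1}([a,b])$ with $\|\nabla_{\gamma_n}g\|\to 0$; by Theorem~\ref{thm:PS} a subsequence converges strongly to some $\bar\gamma\in\Lambda$ with $g(\bar\gamma)\in[a,b]$ (so $\bar\gamma$ is non-constant, hence regular) and, by continuity of the gradient on $\textrm{Reg}(\Lambda)$, $\nabla_{\bar\gamma}g=0$, contradicting the assumption that $[a,b]$ contains no critical value.

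Next I would run a cut-off gradient flow. Fix a smooth $\chi:\R\to[0,1]$ with $\chi\equiv 0$ on $(-\infty,a/2]$ and $\chi\equiv 1$ on $[a,+\infty)$, and set
\[ V(\gamma)=-\,\chi(g(\gamma))\,\frac{\nabla_\gamma g}{1+\|\nabla_\gamma g\|^2}\ \text{ for }g(\gamma)>a/4,\qquad V(\gamma)=0\ \text{ for }g(\gamma)<a/2. \]
The two formulas agree on the overlap (there $\chi(g)=0$), so $V$ is a well-defined, locally Lipschitz, tangent vector field on $\Lambda$ with $\|V\|\le \tfrac12$ everywhere; crucially $V$ is switched off before $g$ can reach $0$, so the flow never meets the singular locus $\textrm{Sing}(\Lambda)$ (the constant loops). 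Its flow $\eta:[0,+\infty)\times\Lambda^b\to\Lambda^b$ is globally defined: orbits decrease $g$ and hence stay in the sublevel $\Lambda^b$, which is complete (a by-product of Proposition~\ref{prop:mu}, as in the proof of Theorem~\ref{thm:pi1}), and $V$ is bounded, so there is no finite-time escape; continuity of $\eta$ is continuous dependence on initial data.

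Finally, along an orbit one has $\tfrac{d}{dt}g(\eta_t(\gamma))=-\chi(g)\,\|\nabla g\|^2/(1+\|\nabla g\|^2)\le 0$, and while $g(\eta_t(\gamma))\in[a,b]$ this derivative is $\le-\kappa$ with $\kappa:=\delta^2/(1+\delta^2)>0$ (using Step~1 and monotonicity of $s\mapsto s/(1+s)$). Thus for $\gamma\in\Lambda^b$ the orbit enters $\Lambda^a$ within time $T:=(b-a)/\kappa$ and, $g$ being non-increasing, stays there, so $H(s,\gamma):=\eta_{sT}(\gamma)$ gives a continuous $H:[0,1]\times\Lambda^b\to\Lambda^b$ with $H(0,\cdot)=\textrm{Id}_{\Lambda^b}$ and $H(1,\Lambda^b)\subset\Lambda^a$, which is (1)--(2); while if $\gamma\in\Lambda^{a/2}$ then $g(\eta_t(\gamma))\le g(\gamma)\le a/2$ for all $t$, so $V\equiv 0$ along the orbit and by uniqueness for $\dot x=V(x)$ we get $\eta_t(\gamma)=\gamma$, i.e. $H(s,\gamma)=\gamma$, which is (3). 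The genuinely hard part is already behind us --- it is Theorem~\ref{thm:PS}; given it, the lemma is classical, and the only points requiring care are the two just flagged: keeping the flow off the singular set (automatic, since $\chi$ kills $V$ before the energy drops to $0$ where the constant loops live) and the global existence of the flow on $\Lambda^b$, for which one replaces the usual $\nabla g/\|\nabla g\|^2$ by the bounded $\nabla g/(1+\|\nabla g\|^2)$ and uses completeness of the sublevel $\Lambda^b$.
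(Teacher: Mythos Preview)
Your proposal is correct and follows essentially the same approach as the paper: identify the region $g^{-1}((0,b])$ as lying in $\textrm{Reg}(\Lambda)$ via Proposition~\ref{prop:singularities}, invoke Theorem~\ref{thm:PS} to get Palais--Smale, and then run the classical gradient-flow deformation. The paper's own proof is terser --- it simply notes these two inputs and then defers to the standard pseudo-gradient deformation lemma (citing \cite{Chang}, \cite{AmbMalc}, \cite{Struwe}) --- whereas you spell out the argument with an explicit cut-off vector field; the only difference is that you use the true gradient (legitimate here since $J$ is smooth on $\Omega$ and $\textrm{Reg}(\Lambda)$ is a smooth submanifold, so $\nabla g$ is locally Lipschitz) while the references cover the general $C^1$ setting via pseudo-gradients.
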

	
	\begin{proof}
	First of all,  since $\textrm{Sing} (\Lambda)=\Lambda^0$, we have that $\Lambda^b_0 \cap \textrm{Sing} (\Lambda)=\emptyset$ and  $\Lambda^b_0$ is a smooth submanifold of $\Omega$. Moreover, by Theorem \ref{thm:PS}, we know that  $g$ satisfies the Palais-Smale condition. Notice indeed that if  $\{\gamma_n\}_{n\in \mathbb{N}}$ is a Palais-Smale sequence as in Theorem \ref{thm:PS}, then a limit $\overline \gamma$ still has energy $0<a\leq J(\overline\gamma)\leq b$. It follows that $g|_{\Lambda^b_0}=J|_{\Lambda^b_0}$ is a $C^1$ functional on a  Hilbert manifold and one can apply the standard theory of pseudo-gradient vector fields \cite[Lemma 3.2]{Chang} (equivalently, see  \cite{AmbMalc} or \cite{Struwe}) to conclude.
	\end{proof}

	\emph{Proof of Theorem \ref{thm:minmax}}.
	First of all observe that, since by assumption $c_\alpha>0$ and $(M,\Delta)$ is contact, then every $\gamma\in g^{-1}(c_\alpha)$ is  regular in the sense of \eqref{def:G}. Note also that condition \eqref{eq:lagrange} in  Proposition \ref{prop:CharClosedG} is equivalent to $\nabla g=0$. Therefore it is enough to show that $c_\alpha$ is a critical value for $g=J|_{\Lambda}$.  Assume by contradiction the opposite. 
	\\We first claim that if $c_\alpha$ is not a critical value then:
	\be\label{eq:claimcad}
	\text{there exists $\delta>0$ such that $[c_\alpha-\delta, c_\alpha+\delta]$ does not contain any critical value.}
	\ee
	Indeed otherwise there would exist a sequence of critical values $c_n\to c_\alpha$, i.e. there would exists 
	\be\label{eq:unPS}
	\{\gamma_n\}_{n \in \N} \in \Lambda \quad \text{such that } \nabla_{\gamma_n} g=0 \text{ and } g(\gamma_n)=J(\gamma_n)=c_n\to c_\alpha. 
	\ee 
	By the Palais-Smale property proved in Theorem \ref{thm:PS} we get that $\gamma_n$ strongly converges, up to subsequences, to a limit $\bar{\gamma}\in \Lambda$.  But since the functional $g:\Lambda \to \R$ is $C^1$, the properties  \eqref{eq:unPS}  force $\bar{\gamma}$ to be a critical point of $g$ with energy $c_\alpha$. This proves \eqref{eq:claimcad}, since we are assuming $c_\alpha$ not to be a critical value.
\\Now we can apply the Deformation Lemma  \ref{lem:deformation} with $a=[c_\alpha-\delta]$, $b=[c_\alpha+\delta]$, get the homotopy $H$ deforming $\Lambda^{c_\alpha+\delta}$ into $\Lambda^{c_\alpha-\delta}$ and show a contradiction with the definition of $c_\alpha$. To this aim,  call $\eta:\Lambda^{c_\alpha+\delta}\to \Lambda^{c_\alpha-\delta}$ the deformation defined by $\eta(\gamma)=H(1,\gamma)$, and observe that by the very definition \eqref{eq:defc} of $c_\alpha$, there exists $\bar{f}\in \alpha$ with $\bar{f}(S^k)\subset \Lambda^{c_\alpha+\delta}$.  Therefore $\eta\circ \bar{f}$ is still an element of the homotopy class $\alpha$ (since homotopic to $\bar{f}$ via $H$) but now  $(\eta\circ \bar{f})(S^k)\subset \Lambda^{c_\alpha-\delta}$.
\\It follows  that
$$\inf_{f\in \alpha} \sup_{\theta \in S^k} J(f(\theta))\leq   \sup_{\theta\in S^k}  J\big((\eta \circ \bar{f})(\theta)\big) \leq c_\alpha-\delta,  $$
contradicting the definition of $c_\alpha$.    
	\hfill$\Box$
	
\medskip

We can now prove the  main result of this section, namely the existence of a closed geodesic.  Such a result extends the classical and celebrated Theorem of  Lyusternik-Fet  \cite{LF} to the case of contact manifolds (for a self-contained proof in the Riemannian case, the interested reader can see \cite{Struwe, Jost, Oancea}). The proof  involves all the tools developed in the paper and follows  from the combination of the  min-max Theorem	\ref{thm:minmax} and the homotopy properties of the loop spaces established in Section \ref{ss:deformHom}.

	\begin{thm}\label{thm:ExMain}
	Let $(M,\Delta)$ be a compact, contact sub-riemannian manifold.  Then there exists at least one non constant closed  sub-riemannian geodesic.
	\end{thm}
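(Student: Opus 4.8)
\emph{Proof proposal.} The plan is, after replacing $M$ by one of its connected components (so that we may assume $M$ compact and connected, as required by Theorems \ref{thm:PS} and \ref{thm:minmax}), to split into two cases according to whether $\pi_1(M)$ is trivial.

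If $\pi_1(M)\neq 0$, there is nothing new to prove: pick any nonzero $\alpha\in\pi_1(M)$ and apply Theorem \ref{thm:pi1}, which produces a closed sub-riemannian geodesic $\bar\gamma_\alpha$ with $[\bar\gamma_\alpha]_{\pi_1(M)}=\alpha$; since $\alpha\neq 0$ this curve is non-constant.

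Suppose then $\pi_1(M)=0$. The first step is a purely topological remark: a compact connected manifold without boundary is never contractible (for instance $H_m(M;\Z/2)\neq 0$ with $m=\dim M$), so, being simply connected, by the Hurewicz theorem there is a smallest integer $j\geq 2$ with $\pi_j(M)\neq 0$. Set $k=j-1\geq 1$. Then $\pi_k(M)=0$, $\pi_{k+1}(M)\neq 0$, and $k$ is the smallest integer with these two properties (for $k'=0$ one would need $\pi_1(M)\neq 0$, which fails; for $1\le k'<k$ one has $k'+1<j$ and $k'+1\geq 2$, hence $\pi_{k'+1}(M)=0$). By Theorem \ref{thm:homotopy}, $\pi_k(\Lambda)\simeq \pi_k(M)\ltimes\pi_{k+1}(M)\simeq\pi_{k+1}(M)\neq 0$, so we may fix a nonzero class $\alpha\in\pi_k(\Lambda)$. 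The second step feeds this $\alpha$ into the variational machinery: since $k$ is exactly the integer appearing in the hypothesis of Corollary \ref{cor:nonzero} and $\alpha\neq 0$, that corollary provides an $\epsilon>0$ with
\[
c_\alpha=\inf_{[f]=\alpha}\ \sup_{\theta\in S^k} J(f(\theta))\ \geq\ \epsilon\ >\ 0 .
\]
Because $k\geq 1$ and $c_\alpha>0$, the min-max Theorem \ref{thm:minmax} applies and yields a closed sub-riemannian geodesic $\gamma_\alpha\in\Lambda$ with $J(\gamma_\alpha)=c_\alpha>0$; in particular $\gamma_\alpha$ is non-constant. This settles the simply connected case, hence the theorem.

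As for where the difficulty lies: for this final statement essentially all the work has already been carried out earlier in the paper. The genuine obstacle is the validity of the Palais–Smale condition near the singular (constant) loops, i.e.\ Theorem \ref{thm:PS}, on which the Deformation Lemma \ref{lem:deformation} and hence the min-max Theorem \ref{thm:minmax} rely; together with the strict positivity of $c_\alpha$, i.e.\ Corollary \ref{cor:nonzero}, which itself rests on the Hurewicz fibration property and the resulting control of the homotopy of $\Lambda$ from the first part of the paper. Granting these, the argument above is a short assembly, the only elementary new ingredient being the Hurewicz-type observation that a simply connected closed manifold carries a nontrivial higher homotopy group.
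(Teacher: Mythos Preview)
Your proof is correct and follows essentially the same route as the paper: split on whether $\pi_1(M)$ vanishes, invoke Theorem~\ref{thm:pi1} in the non-simply-connected case, and in the simply connected case locate the smallest $j\geq 2$ with $\pi_j(M)\neq 0$, set $k=j-1$, and feed a nonzero $\alpha\in\pi_k(\Lambda)$ into Corollary~\ref{cor:nonzero} and then Theorem~\ref{thm:minmax}. Your version is slightly more explicit than the paper's in two harmless ways---you pass to a connected component at the outset (needed since Theorems~\ref{thm:PS} and~\ref{thm:minmax} assume connectedness), and you justify the existence of a nontrivial homotopy group via $H_m(M;\Z/2)\neq 0$ and the Hurewicz theorem rather than just citing it---but the structure and the ingredients are identical.
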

	
	\begin{proof}
	First of all, if $\pi_1(M)\neq \{0\}$ then the claim follows by Theorem \ref{thm:pi1}. Notice that in this case the proof was achieved by a minimization procedure. 
	
	If instead $\pi_1(M)=\{0\}$, i.e. if $M$ is simply connected, then a minimization procedure would trivialize and give just a constant curve. To handle this case we then argue via  min-max: thanks to Theorem \ref{thm:minmax} it is enough to show that the min-max level $c_\alpha$ defined in \eqref{eq:defc} is strictly positive, for some $k\in \N$ and $\alpha \in \pi_k(\Lambda)$. 
	To this aim recall that given a compact $n$-dimensional manifold there exists at least one number $1\leq k \leq n$ such that $\pi_k(M)\neq 0$ (see \cite{Spanier} for a proof). Let $k\geq 2$ be the minimal one with this property, i.e.  $k\in \N$ is smallest number such that $\pi_k(M)\neq \{0\}$ but $\pi_{k-1}(M)=\{0\}$.  But then Corollary \ref{cor:nonzero} implies that for any $0\neq \alpha \in \pi_{k-1}(\Lambda)$, the corresponding min-max value $c_\alpha$ is strictly positive, concluding the proof.
	\end{proof}

\end{document}